\theoremstyle{plain}
\newtheorem{theorem}{Theorem}[section]
\newtheorem{lemma}[theorem]{Lemma}
\newtheorem{corollary}[theorem]{Corollary}
\newtheorem{proposition}[theorem]{Proposition}
\theoremstyle{definition}
\newtheorem{definition}[theorem]{Definition}
\theoremstyle{remark}
\newtheorem{example}[theorem]{Example}
\newcommand{\R}{{\mathbb{R}}}
\newcommand{\C}{\mathscr{C}}
\newcommand{\cqd}{\hfill $\square$}
\title[Short Title]{Piecewise contractions defined \\ by iterated function systems}
\subjclass[2000]{Primary 37E05 Secondary 37C20, 37E15}
\keywords{Piecewise contraction, topological dynamics,
  periodic orbit, iterated function systems}
\begin{document}

\maketitle

\centerline{\scshape Arnaldo Nogueira\footnote{Partially supported by BREUDS and CAPES/COFFECUB 661/10.}}
\smallskip
{\footnotesize
 \centerline{Institut de Math\'ematiques de Luminy,  Aix-Marseille Universit\'e}
 \centerline {163, avenue de Luminy - Case 907, 13288 Marseille Cedex 9, France}
 \centerline{arnaldo.nogueira@univ-amu.fr}} 

\medskip

\centerline{\scshape Benito Pires \footnote{Partially supported by FAPESP 2013/12359-3 and CNPq 303731/2013-3.} and Rafael A. Rosales}
\smallskip

{\footnotesize
 \centerline{Departamento de Computa\c c\~ao e Matem\'atica, Faculdade de Filosofia, Ci\^encias e Letras}
 \centerline {Universidade de S\~ao Paulo, 14040-901, Ribeir\~ao Preto - SP, Brazil}
   \centerline{benito@usp.br, rrosales@usp.br} }

\marginsize{2.5cm}{2.5cm}{1cm}{2cm}

  \begin{abstract} Let $\phi_1,\ldots,\phi_n:[0,1]\to (0,1)$ be Lipschitz contractions. Let $I=[0,1)$, $x_0=0$ and $x_n=1$.
 We prove that for Lebesgue almost every
 $(x_1,...,x_{n-1})$ satisfying \mbox{$0<x_1<\cdots <x_{n-1}<1$},
  the piecewise contraction $f:I\to I$ defined by $x\in [x_{i-1},x_i)\mapsto \phi_i(x)$ is asymptotically periodic. More precisely, $f$ has at least one and at most $n$ periodic orbits and the $\omega$-limit set $\omega_f(x)$ is a periodic orbit of $f$ for every $x\in I$. 
    \end{abstract}

\section{Introduction}

Here we consider  piecewise contracting maps of the interval 
 which are defined by iterated function systems. Throughout, $\{\phi_1,\ldots,\phi_n\}$  is an {\it Iterated Function System} (IFS) if each map $\phi_i:[0,1]\to (0,1)$ is a Lipschitz contraction.

Let $I=[0,1)$, $n\ge 2$ and $\{\phi_1,\ldots,\phi_n\}$ be an IFS. Let 
$$
\Omega_{n-1}=\{(x_1,\ldots,x_{n-1}):0<x_1<\cdots <x_{n-1}<1\}.
$$ 
For each $(x_1,\ldots,x_{n-1})\in\Omega_{n-1}$, let  $f=f_{\phi_1,\ldots,\phi_n,x_1,\ldots,x_{n-1}}: I \to I$ be the {\it $n$-interval piecewise contraction $($PC$)$}  defined by
\begin{equation}\label{mapf}
f(x) =\phi_i(x), \quad x\in [x_{i-1},x_i),
\end{equation} 
where $x_0=0$ and $x_n=1$.
Notice that we may identify the $(n-1)$-parameter family of  all maps $f$ defined by (\ref{mapf}) with the open region $\Omega_{n-1}\subset\R^{n-1}$. The aim of this article is to characterize the topological dynamics of a typical PC $f$. This question was already considered in \cite{NPR,NP}, but the results achieved therein depend on two additional hypotheses: that the Lipschitz contractions $\phi_1,\ldots,\phi_n$ are injective and have non-overlapping ranges. Therefore, only injective $n$-interval PCs were studied in the mentioned articles. 
Here we introduce an approach which deals with both injective and non-injective PCs altogether, and addresses the question in full generality. 
 
 A number of systems such as Cherry flows (see \cite{G1}), strange billiards (see \cite{BB,CSR,MS}), outer billiards  (see \cite{IJ,MGG,ST}), queueing systems (\cite{MMPV}), neural networks (\cite{CRB}) and genetic regulatory networks (\cite{CFLM}) are governed by PCs. In discretely controlled dynamical systems, the maps $\phi_1,\ldots,\phi_n$ are the decision-making policies that
 drive the process. Therefore, characterizing the long-term behavior of PCs is an issue of current interest to a broad audience.
  
A great deal is already known about the dynamics of injective $n$-interval PCs. Indeed, such maps are topologically conjugate to piecewise linear contractions having at most $n$ periodic orbits (see \cite[Theorem 1.1 and Theorem 1.2]{NP}). It is also known that a typical injective $n$-interval PC has at least one and at most $n$ periodic orbits that attract all orbits of the map (see \cite{NPR}). A similar result was achieved for  piecewise affine contractions of $\mathbb{C}$ (see \cite{BD}). The combinatorial dynamics of (injective or non-injective) PCs of $2$-intervals (or $2$-complete metric spaces) 
was studied in \cite {GT}. Here we deal with the general case where the map $f$ is an $n$-interval (injective or non-injective) piecewise (linear or non-linear) contraction.

Throughout the article all metrical statements concern the Lebesgue measure. Given an $n$-interval PC $f:I\to I$ and $x\in I$,  let $\omega_f(x)=\bigcap_{m\ge 0}\overline{\bigcup_{k\ge m}\{ f^k(x)\}}$ be the $\omega$-limit set of $x$. The finite set  $O_f(p)=\{p,f(p),\ldots, f^{k-1}(p)\}$, where $p\in I$ and $f^k(p)=p$, is called a {\it periodic orbit of} $f$. We say that $f$ is {\it asymptotically periodic} if $\omega_f(x)$ is a periodic orbit of $f$ for every $x\in I$.
 
Our main result is the following.

\begin{theorem}\label{main} 
Let $\phi_1,\ldots,\phi_n:[0,1]\to (0,1)$ be \mbox{Lipschitz} contractions, then for almost every $(x_1,\ldots,x_{n-1})\in\Omega_{n-1}$, the $n$-interval PC $f=f_{\phi_1,\ldots,\phi_n,x_1,\ldots,x_{n-1}}$ defined by $($\ref{mapf}$)$ is  asymptotically periodic and has at least one and at most $n$ periodic orbits.
\end{theorem}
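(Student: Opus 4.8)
The plan is to show that for almost every parameter $(x_1,\ldots,x_{n-1})\in\Omega_{n-1}$, the points of discontinuity of $f$ are not recurrent, and then to exploit the contracting property to deduce asymptotic periodicity. More precisely, set $\Gamma=\{x_1,\ldots,x_{n-1}\}$ for the separation points (the jump locus of $f$). The crucial step is the \emph{separation property}: for a.e.\ parameter, no separation point $x_i$ lies in the closure of the forward orbit of any separation point, i.e.\ $x_i\notin\overline{\{f^k(x_j):k\ge 1,\ 1\le j\le n-1\}}$ for all $i$, and moreover the orbit of each $x_j$ stays a definite distance away from $\Gamma$ infinitely often. To establish this I would fix the contractions $\phi_1,\ldots,\phi_n$ and view each iterate $f^k(x_j)$, as $(x_1,\ldots,x_{n-1})$ ranges over $\Omega_{n-1}$, as a piecewise-$\mathcal C^1$ (indeed piecewise composition of the $\phi_i$) function of the parameter on each region of $\Omega_{n-1}$ determined by the itinerary; on each such region the map $(x_1,\ldots,x_{n-1})\mapsto f^k(x_j)-x_i$ is, generically in the $x_i$ variable, non-degenerate because differentiating in $x_i$ itself gives a term of the form $1$ minus a product of derivatives of $\phi$'s (each of modulus $<\lambda<1$), hence nonzero. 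Thus the set of parameters for which $f^k(x_j)=x_i$ is contained in a countable union of codimension-one (hence Lebesgue-null) sets, and a Fubini/transversality argument over all $k,i,j$ gives a full-measure set $\mathcal G\subset\Omega_{n-1}$ on which the separation property holds.

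The second step is to fix $(x_1,\ldots,x_{n-1})\in\mathcal G$ and run the standard contraction argument. Let $\lambda<1$ be a common Lipschitz constant for the $\phi_i$. If $x\in I$ is such that for some $m$ the forward orbit $f^m(x),f^{m+1}(x),\ldots$ remains, for a long block of consecutive times, inside a single continuity interval's \emph{dynamical} neighborhood, then the distances $|f^{k+1}(y)-f^{k+1}(x)|\le\lambda|f^{k}(y)-f^{k}(x)|$ contract for two orbits that follow the same itinerary. The separation property guarantees that for every $x$ the itinerary of $x$ eventually coincides with the itinerary of one of finitely many ``trapped'' orbits: because the orbit of each discontinuity point $x_j$ stays bounded away from $\Gamma$, there is $\delta>0$ and a finite collection of arcs (obtained from the orbit closures of the $x_j$ together with the fixed/periodic structure forced by contraction) that is forward-invariant up to the dynamics; refining, one shows $\bigcap_k f^k(I)$ is a finite union of intervals on which $f$ is locally a single contraction, forcing each to shrink to a periodic orbit. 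Concretely I would argue: the closure of $\bigcup_j O_f^+(x_j)\cup\{0\}$ together with $I$ decomposes $I$ into finitely many half-open intervals $J_1,\ldots,J_N$ each mapped by a single branch $\phi_{i}$ homeomorphically onto an interval contained in some $J_{i'}$; the induced interval map on indices has an eventually periodic structure, and along each periodic index-cycle the composition of the corresponding $\phi_i$'s is a contraction of a closed interval into itself, with a unique fixed point, yielding a periodic orbit of $f$; every $x\in I$ follows one of these index itineraries from some time on, so $\omega_f(x)$ is the corresponding periodic orbit.

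For the bound on the number of periodic orbits: each periodic orbit must, by the contraction estimate, contain at least one point in the interior of some continuity interval, and in fact distinct periodic orbits correspond to distinct periodic index-cycles of the finite interval-exchange-like combinatorial map described above; since there are $n$ branches and the combinatorial map on the $N$ intervals is a (possibly non-injective) self-map whose periodic cycles are pairwise disjoint and each must ``use'' the dynamics in an essential way, a counting argument — of the type already carried out in \cite{NPR,NP} for the injective case and adaptable here because non-injectivity only merges branches and cannot create new periodic cycles — shows there are at most $n$ of them. That there is at least one periodic orbit is immediate: $\bigcap_{k\ge0}f^k(I)$ is nonempty (it is a decreasing intersection of nonempty compact-up-to-endpoints sets, or one takes any orbit and uses compactness plus contraction along a recurrent itinerary), and any point of it lies in a periodic orbit by the contraction-to-a-fixed-point argument.

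The main obstacle, and the place where the new idea is needed relative to \cite{NPR,NP}, is Step~1 in the \emph{non-injective} and \emph{non-linear} setting: when the $\phi_i$ are neither injective nor have disjoint ranges, the images $\phi_i([x_{i-1},x_i))$ overlap, so one cannot track orbits by a simple symbolic coding with disjoint cylinders, and the transversality computation $\partial_{x_i}\big(f^k(x_j)-x_i\big)\ne0$ must be done carefully along each piece of the stratification of $\Omega_{n-1}$ by itineraries — in particular one has to check that the exceptional set where the derivative in $x_i$ \emph{does} vanish (which can happen for non-linear $\phi$, unlike the linear case where the derivative is identically $1$ minus a constant product) is itself lower-dimensional, which requires a Sard-type argument or a direct analyticity/real-analytic-stratification hypothesis; handling this cleanly — likely by induction on $n$, peeling off one discontinuity point at a time and applying Fubini in the last coordinate — is the technical heart of the proof.
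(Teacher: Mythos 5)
Your first step---transversality in parameter space to ensure $f^k(x_j)\neq x_i$ for all $k,i,j$---is correct and clean (on a fixed itinerary region $f^k(x_j)=h(x_j)$ for a fixed composition $h$, so $\partial_{x_i}\bigl(h(x_j)-x_i\bigr)$ equals $-1$ if $i\neq j$ and $h'(x_j)-1\neq 0$ if $i=j$; there is no exceptional set to worry about, so the ``technical heart'' you fear is in fact easy). The paper states this as Lemma~\ref{inde}, where the condition $x_j\neq h(x_i)$ cuts out a countable union of graphs, hence a null set. But this is a \emph{minor} ingredient in the paper, used only for the counting in Section~\ref{upperb}, not for asymptotic periodicity itself.

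The genuine gap is in how you pass from this to asymptotic periodicity. You silently upgrade ``$f^k(x_j)\neq x_i$ for all finite $k$'' to ``$x_i\notin\overline{\{f^k(x_j):k\ge1\}}$'' and to ``the orbit of each $x_j$ stays bounded away from $\Gamma$.'' The transversality argument gives no control whatsoever over accumulation points of forward orbits: the orbit of $x_j$ could perfectly well accumulate on a Cantor set containing $x_i$ while never hitting it, and this is exactly the pathology one must rule out. Consequently the next claim, that $\overline{\bigcup_j O_f^+(x_j)}\cup\{0\}$ cuts $I$ into \emph{finitely} many intervals each mapped by a single branch into another, presupposes that those forward orbits are eventually periodic---which is the conclusion, not an input. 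The argument is circular at precisely the point where the real difficulty lies.

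The paper resolves this with a different idea that does not appear in your proposal: it works with the \emph{backward} preimage set $Q=\bigcup_{i}\bigcup_{k\ge0}f^{-k}(\{x_i\})$, shows $Q$ is finite for a.e.\ choice of $(x_1,\dots,x_{n-1})$, and uses finiteness of $Q$ to produce an \emph{invariant quasi-partition} on which itineraries are automatically eventually periodic. Finiteness of $Q$ is itself obtained by studying the IFS-generated sets $A_k=\bigcup_{h\in\mathscr{C}_k}h(\bar I)$: for a ``highly contractive'' IFS (sum of $|D\phi_i|$ less than one) one has $\lambda(A_k)\to0$, so a.e.\ $x_i$ falls outside $\bigcap_k A_k$ and then has only finitely many $h$-preimages (this also uses a Sard-type fact about Lipschitz maps, Lemma~\ref{st}). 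The general case is then reduced to the highly contractive one in two stages: replacing each $\phi_i$ by a ``cut-off'' version $\varphi_i$ that agrees with it near $[x_{i-1},x_i]$ (Section~\ref{epi}, giving $\kappa<\frac12$), and passing to an iterate $f^k$ with $\rho^k<\frac12$ viewed as a PC over the IFS $\mathscr{C}_k$ (Section~\ref{gc}). None of this reduction machinery is present in your sketch, and Example~\ref{ex:nonvanishing} in the paper shows it is unavoidable: there $\bigcap_k A_k$ is a nondegenerate interval, so the naive measure-shrinking fails and the iteration step is essential.

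Finally, the bound of $n$ periodic orbits is not a routine adaptation of the injective case. Your remark that ``non-injectivity only merges branches and cannot create new periodic cycles'' is an assertion, not an argument. The paper's Section~\ref{upperb} gives a specific combinatorial proof: it singles out the intervals $F_i,G_i$ of the quasi-partition adjacent to each $x_i$, defines an equivalence relation via eventual coincidence of images, and uses Lemma~\ref{P2} and Corollary~\ref{P3} to show there are at most $n$ classes. That argument relies on the finiteness of the sets $Q_i=\bigcup_{k\ge0}f^{-k}(\{x_i\})$, which again comes from the quasi-partition machinery you have not built.
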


The claim of Theorem \ref{main} holds if we replace in (\ref{mapf}) the partition $[x_{0},x_1)$, \ldots, $[x_{n-1},x_n)$ by any partition $I_1,\ldots,I_n$ of $I$ such that each interval $I_i$ has endpoints 
$x_{i-1}$ and $x_i$.

Our approach consists in studying how the IFS $\{\phi_1,\ldots,\phi_n\}$ acts on the closed unit interval $\bar{I}=[0,1]$.  Much information about the topological dynamics of a typical $PC$ $f$ is revealed by analyzing  the sequence of nested sets $A_k$, $k\ge 0$, defined recursively as
$$
A_0=\bar{I}\quad\textrm{and}\quad A_k=\cup_{i=1}^n \phi_i (A_{k-1})\quad\textrm{for}\quad k\ge 1.
$$ 
In this regard, we show in Section \ref{IFS} that 
the asymptotical periodicity of a typical $f$ is assured whenever $\cap_{k\ge 0} A_k$ is a null set. In the same section,
we prove that a sufficient condition for $\cap_{k\ge 0} A_k$ to be a null set is that the IFS  be formed by highly contracting maps.  The main result there states that a typical PC defined by a highly contractive IFS is asymptotically periodic. 

The approach followed in Section \ref{IFS} has limited scope because the set $\cap_{k\ge 0} A_k$ may
have positive Lebesgue measure for an IFS which is not highly contractive. Section \ref{sexample} provides a simple
example of this. Nevertheless, the result presented in Section \ref{IFS} is a tool that will be used to tackle the general case. In Section \ref{epi} we improve the result provided in the previous section. More specifically, we prove that
the assumption that the IFS $\{\phi_1,\ldots,\phi_n\}$ is highly contractive can be replaced by the weaker hypothesis that each $\phi_i$ is a $\kappa$-Lipschitz contraction with $0\le \kappa<\frac12$. In Section \ref{gc}, we prove that the general case can be reduced to the case in which each $\phi_i$ is a $\kappa$-Lipschitz contraction with $\kappa<\frac12$. Finally,
in Section \ref{upperb}, we make use of some combinatorics to prove that the upper bound for the number of periodic orbits of a typical $f$ defined by (\ref{mapf}) is $n$. Our main result follows from Theorem \ref{toconclude2} and Theorem \ref{IP1}.

\section{Highly Contractive Iterated Function Systems}\label{IFS}

In this section, we provide a partial version of our main result that holds for any IFS  with small contraction constant. 
 More precisely, we say that an IFS $\{\phi_1,\ldots,\phi_n\}$ is {\it highly contractive} if there exists $0\le\rho<1$ such that for almost every $x\in I$, $D\phi_i(x)$ exists and
\begin{equation}\label{rho}
\vert D\phi_1(x)\vert +\ldots +\vert D\phi_n(x)\vert \le \rho<1.
\end{equation}
This includes the case in which
each $\phi_i:\bar{I} \to (0,1)$ is a $\kappa$-Lipschitz contraction with $\kappa<\frac1n$. 
The main result of this section is the following.

\begin{theorem}\label{pr1} 
Let $\Phi=\{\phi_1,\ldots,\phi_{n}\}$ be a highly contractive IFS, then there exists a full set 
$W_{\Phi}\subset I$ such that, for every $(x_1,\ldots,x_{n-1})\in\Omega_{n-1}\cap W_{\Phi}^{n-1}$, the $n$-interval PC 
$f_{\phi_1,\ldots,\phi_n,x_1,\ldots,x_{n-1}}$ defined by $($\ref{mapf}$)$  is asymptotically periodic.
\end{theorem}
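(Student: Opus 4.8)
The plan is to combine a soft measure estimate on the nested sets $A_k$ with a dynamical lemma asserting that an orbit which eventually avoids a neighbourhood of the discontinuity set must have a periodic $\omega$-limit set. First I would record that $A_\infty:=\bigcap_{k\ge 0}A_k$ is Lebesgue-null. Since each $\phi_i$ is Lipschitz, hence absolutely continuous, $|\phi_i(E)|\le\int_E|D\phi_i|$ for every measurable $E\subseteq[0,1]$; summing over $i$ and using (\ref{rho}),
$$
|A_k|\le\sum_{i=1}^n|\phi_i(A_{k-1})|\le\int_{A_{k-1}}\sum_{i=1}^n|D\phi_i(t)|\,dt\le\rho\,|A_{k-1}|,
$$
so $|A_k|\le\rho^k\to0$ and $|A_\infty|=\lim_k|A_k|=0$. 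Each $A_k$ is compact (a finite union of continuous images of $[0,1]$), hence so is $A_\infty$. This is the only place the ``highly contractive'' hypothesis enters, which is why the same scheme will also yield the general assertion of this section that asymptotic periodicity of a typical $f$ holds whenever $\bigcap_k A_k$ is null.

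Next I would observe that every orbit of every $f$ is attracted to $A_\infty$. An easy induction gives $f^k(I)\subseteq A_k$ for all $k$: indeed $f(I)=\bigcup_i\phi_i([x_{i-1},x_i))\subseteq\bigcup_i\phi_i(A_0)=A_1$, and if $f^{k-1}(I)\subseteq A_{k-1}$ then $f^k(I)=\bigcup_i\phi_i\big(f^{k-1}(I)\cap[x_{i-1},x_i)\big)\subseteq\bigcup_i\phi_i(A_{k-1})=A_k$. Since $A_k$ is closed, $\omega_f(x)\subseteq\overline{f^k(I)}\subseteq A_k$ for every $k$, hence $\omega_f(x)\subseteq A_\infty$ and, by compactness, $\mathrm{dist}(f^m(x),A_\infty)\to0$ as $m\to\infty$, for every $x\in I$. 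The dynamical core is then the following lemma: if $\lambda<1$ is a common Lipschitz constant for the $\phi_i$, $\Delta=\{x_1,\ldots,x_{n-1}\}$ is the discontinuity set of $f$, and for some $x$, $\delta>0$ and $N$ one has $\mathrm{dist}(f^m(x),\Delta)\ge\delta$ for all $m\ge N$, then $\omega_f(x)$ is a periodic orbit. I would prove it by replacing $x$ with $f^N(x)$; a pigeonhole argument on the orbit (which lies in $[0,1)$) produces $a<b$ with $|f^a(x)-f^b(x)|<\delta$; since two points within $\delta$ of each other and each $\delta$-far from $\Delta$ lie in a common partition interval, $f$ acts on both by the same $\phi_i$, so iterating gives $|f^{a+j}(x)-f^{b+j}(x)|\le\lambda^j|f^a(x)-f^b(x)|\to0$. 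Writing $q=b-a$, the arithmetic subsequences $(f^{a+l+jq}(x))_{j\ge0}$, $l=0,\ldots,q-1$, have summable consecutive differences, hence converge to points $p_0,\ldots,p_{q-1}$, each $\delta$-far from $\Delta$ and therefore a continuity point of $f$; thus $f(p_l)=p_{l+1\bmod q}$, so $\{p_0,\ldots,p_{q-1}\}=O_f(p_0)$ is a periodic orbit, and it equals $\omega_f(x)$ since $p_0\in\omega_f(x)$ and $f$ is continuous along the cycle.

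Finally I would take $W_\Phi=I\setminus A_\infty$, which is full by the first step, and fix $(x_1,\ldots,x_{n-1})\in\Omega_{n-1}\cap W_\Phi^{n-1}$ with associated PC $f$. Every $x_i\notin A_\infty$ and $A_\infty$ is compact, so $\delta:=\tfrac12\min_i\mathrm{dist}(x_i,A_\infty)>0$; given any $x\in I$ there is $N$ with $\mathrm{dist}(f^m(x),A_\infty)<\delta$ for $m\ge N$, whence $\mathrm{dist}(f^m(x),x_i)\ge\delta$ for all such $m$ and all $i$, and the lemma gives that $\omega_f(x)$ is a periodic orbit. As $x$ is arbitrary, $f$ is asymptotically periodic. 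The main obstacle is the lemma: the delicate point is upgrading ``orbit points $q$ steps apart draw together'' to ``the $\omega$-limit set is a genuine periodic orbit of $f$'', which relies on the geometric decay $\lambda^j$ to make the relevant subsequences Cauchy and on the separation from $\Delta$ to guarantee continuity of $f$ at the limit points. Everything else — the measure bound, the inclusion $f^k(I)\subseteq A_k$, and the product-measure passage to a full parameter set — is routine.
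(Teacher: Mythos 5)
Your argument is correct, but it follows a genuinely different route from the paper's. The paper constructs the set $W_\Phi = W_1\cap W_2$, where $W_1 = I\setminus\bigcap_k A_k$ and $W_2$ is the full set (from a Sard-type theorem of Wells, Lemma~\ref{st}) of points with finite $h$-preimage for all $h\in\bigcup_k\mathscr{C}_k$. The point of $W_2$ is to guarantee that $Q=\bigcup_i\bigcup_k f^{-k}(\{x_i\})$ is finite, so that $f$ admits the \emph{invariant quasi-partition} $\mathscr{P}$ (Definition~\ref{partition}); the proof then tracks symbolic itineraries through $\mathscr{P}$, shows they are eventually periodic, and extracts a fixed point of $f^{p}$ on a periodic partition interval via a Banach-type argument (the step $c>a$, $d<b$ again uses $\partial J\subset I\setminus\bigcap_k A_k$). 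You skip the quasi-partition entirely: you take $W_\Phi = I\setminus\bigcap_k A_k$ alone, observe $f^k(I)\subset A_k$ so the orbit is attracted to the compact null set $A_\infty$, deduce from $x_i\notin A_\infty$ that the orbit is eventually uniformly far from the discontinuity set, and then run a pigeonhole-plus-geometric-Cauchy argument to manufacture the periodic limit cycle directly. Your version is more elementary (no appeal to Wells' theorem, no $W_2$, no symbolic dynamics) and gives a clean standalone proof of asymptotic periodicity. What it does not provide is the invariant quasi-partition itself, which the paper deliberately develops here because it is reused in Theorems~\ref{mr1}, \ref{toconclude2} and in the counting argument of Section~\ref{upperb}; your route would have to be supplemented if one wanted to recover the ``at most $n$ periodic orbits'' bound by the paper's method. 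As a side remark, your observation that the only role of the highly-contractive hypothesis is to force $\bigl|\bigcap_k A_k\bigr|=0$ matches the paper's own comment in the introduction.
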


In Theorem \ref{pr1},  $W_{\Phi}^{n-1}$ denotes the set $\underbrace{W_{\Phi}\times\cdots\times W_{\Phi}}_{n-1\,\,\textrm{times}}$. 
We need some preparatory lemmas to prove Theorem \ref{pr1}. Throughout this section, except in Definition \ref{partition} and Lemma \ref{tmt}, 
assume that $\Phi=\{\phi_1,\ldots,\phi_n\}$ is a highly contractive IFS.

Denote by $Id$ the identity map on $\bar{I}$. Let $\mathscr{C}_0=\{Id\}$ and $A_0=\bar{I}$. For every $k\ge 0$, let
\begin{equation}\label{ckak}
\mathscr{C}_{k+1}=\mathscr{C}_{k+1}(\phi_1,\ldots,\phi_n)=\{\phi_{i}\circ h \mid 1\le i\le n, h \in \mathscr{C}_{k} \} \;\; \mbox{ and } \; A_k=\cup_{h\in\mathscr{C}_k} h(\bar{I}).
\end{equation} 

Let $\lambda$ denote the Lebesgue measure. 
 
\begin{lemma}\label{Ck} For every $k\ge 0$, the following holds
\begin{itemize} \item [(i)] $A_k$ is the union of at most $n^k$ intervals;
\item [(ii)] $A_{k+1}\subset A_k$;
\item [(iiii)] $A_{k+1}=\cup_{i=1}^n \phi_i (A_k)$;
\item [(iv)] $\lambda(A_{k+1})\le \rho \lambda (A_k)$, where $\rho$ is given by $($\ref{rho}$)$.
\end{itemize}
\end{lemma}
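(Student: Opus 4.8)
The plan is to prove the four claims of Lemma~\ref{Ck} by induction on $k$, reading off most of them directly from the recursive structure of the sets $\mathscr{C}_k$ and $A_k$.

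\textbf{Claim (i).} First I would argue by induction that $|\mathscr{C}_k|\le n^k$: indeed $|\mathscr{C}_0|=1$ and each element of $\mathscr{C}_{k+1}$ is of the form $\phi_i\circ h$ with $1\le i\le n$ and $h\in\mathscr{C}_k$, so $|\mathscr{C}_{k+1}|\le n\,|\mathscr{C}_k|$. Since each $h\in\mathscr{C}_k$ is a composition of continuous maps, $h(\bar I)$ is a compact connected subset of $\R$, hence a closed interval (possibly degenerate), and $A_k=\cup_{h\in\mathscr{C}_k}h(\bar I)$ is therefore a union of at most $n^k$ intervals.

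\textbf{Claims (iii) and (ii).} For (iii), I would simply unwind the definitions:
$$
A_{k+1}=\bigcup_{g\in\mathscr{C}_{k+1}}g(\bar I)=\bigcup_{i=1}^n\bigcup_{h\in\mathscr{C}_k}(\phi_i\circ h)(\bar I)=\bigcup_{i=1}^n\phi_i\!\left(\bigcup_{h\in\mathscr{C}_k}h(\bar I)\right)=\bigcup_{i=1}^n\phi_i(A_k),
$$
using that $\phi_i$ commutes with unions of sets. Then (ii) follows from (iii) together with the fact that $\phi_i(\bar I)\subset(0,1)\subset\bar I$ for every $i$: since $A_k\subset\bar I=A_0$ (trivially), monotonicity of the operator $B\mapsto\cup_i\phi_i(B)$ gives $A_{k+1}=\cup_i\phi_i(A_k)\subset\cup_i\phi_i(A_{k-1})=A_k$; the base case $A_1=\cup_i\phi_i(\bar I)\subset\bar I=A_0$ is immediate. (Alternatively, $A_{k+1}\subset A_k$ follows at once once (iii) and the inductive inclusion $A_k\subset A_{k-1}$ are in hand.)

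\textbf{Claim (iv).} This is the only analytic point. By (iii) and subadditivity of $\lambda$, it suffices to bound $\sum_{i=1}^n\lambda(\phi_i(A_k))$. Since each $\phi_i$ is Lipschitz it is absolutely continuous, so for any measurable (indeed, for any interval, which is all we need by (i)) set $B$ one has $\lambda(\phi_i(B))\le\int_B|D\phi_i|\,d\lambda$; summing over $i$ and using the highly contractive hypothesis (\ref{rho}), namely $\sum_{i=1}^n|D\phi_i(x)|\le\rho$ for a.e.\ $x$, yields $\sum_{i=1}^n\lambda(\phi_i(A_k))\le\int_{A_k}\sum_{i=1}^n|D\phi_i|\,d\lambda\le\rho\,\lambda(A_k)$, hence $\lambda(A_{k+1})\le\rho\,\lambda(A_k)$. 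The main obstacle, such as it is, is this change-of-variables/absolute-continuity estimate: one must be slightly careful that $\phi_i$ need not be injective, but the inequality $\lambda(\phi_i(B))\le\int_B|D\phi_i|$ holds regardless of injectivity for absolutely continuous $\phi_i$ on an interval (it is an equality only in the injective case), which is exactly what makes the argument go through in the non-injective setting that is the point of the paper.
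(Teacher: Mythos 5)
Your proof is correct and, for claims (i), (iii), and (iv), is essentially the paper's argument: in particular you make exactly the same use of the change-of-variables inequality for (possibly non-injective) Lipschitz maps in (iv). The one place you genuinely diverge is (ii). You prove it by induction on $k$: given $A_k\subset A_{k-1}$, monotonicity of $B\mapsto\bigcup_i\phi_i(B)$ and claim (iii) give $A_{k+1}=\bigcup_i\phi_i(A_k)\subset\bigcup_i\phi_i(A_{k-1})=A_k$, with base case $A_1\subset A_0=\bar I$. The paper instead observes the re-indexing identity $\mathscr{C}_{k+1}=\{h\circ\phi_i : 1\le i\le n,\ h\in\mathscr{C}_k\}$ (one may compose from either end), so that any $g=h\circ\phi_i\in\mathscr{C}_{k+1}$ satisfies $g(\bar I)=h(\phi_i(\bar I))\subset h(\bar I)\subset A_k$ directly, with no induction. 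Both routes are short and valid; the paper's is a one-liner once the re-indexing is noticed, while yours avoids that observation at the cost of carrying an inductive hypothesis. One wording slip worth fixing: your clause ``since $A_k\subset\bar I=A_0$ (trivially)'' is not the inclusion that monotonicity actually needs; what you need (and in fact then invoke in the parenthetical ``alternatively'' sentence) is the inductive hypothesis $A_k\subset A_{k-1}$, not $A_k\subset A_0$.
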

\begin{proof} The claim (i) is elementary. It follows easily from (\ref{ckak}) that
$$\mathscr{C}_{k+1}=\{h\circ\phi_{i} \mid 1\le i\le n, h \in \mathscr{C}_{k} \}.$$
Now let $g\in \mathscr{C}_{k+1}$, thus there exist
$1\le i\le n$ and $h\in\mathscr{C}_k$ such that $g=h\circ\phi_i$. In this way,
$g(\bar{I})=h\left(\phi_i(\bar{I})\right)\subset h(\bar{I})\subset A_k$, then $A_{k+1}\subset A_k$, which proves the claim (ii). The claim (iii) follows from the following equalities
$$A_{k+1}=\cup_{g\in\mathscr{C}_{k+1}}g(\bar{I})=\bigcup_{i=1}^n\bigcup_{h\in\mathscr{C}_k}\phi_i\big(h(\bar{I})\big)
=\bigcup_{i=1}^n\phi_i\Big(\bigcup_{h\in\mathscr{C}_k}h(\bar{I})\Big)=\bigcup_{i=1}^n \phi_i(A_k).
$$
The change of variables formula for Lipschitz maps together with the claim (iii) and the equation (\ref{rho}) yield
$$
\lambda(A_{k+1})\le \sum_{i=1}^n \lambda\left(\phi_i(A_k)\right)\le\sum_{i=1}^n \int_{A_k} \vert D\phi_i\vert\,{\rm d}\lambda=\int_{A_k} \bigg(\sum_{i=1}^n \vert D\phi_i\vert\bigg)\,{\rm d}\lambda\le\rho \lambda(A_k),
$$ 
which proves the claim (iv).
\end{proof}

\begin{lemma}\label{WW1} Let $W_1=I \setminus\cap_{k\ge 0} A_k$, then $W_1=\bar{I}$ almost surely.
\end{lemma}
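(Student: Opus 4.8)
The plan is to obtain the lemma as an immediate consequence of Lemma~\ref{Ck}. First I would record that, by Lemma~\ref{Ck}(i), each $A_k$ is a finite union of intervals, hence Borel measurable with $\lambda(A_0)=\lambda(\bar I)=1<\infty$, and that by Lemma~\ref{Ck}(ii) the sequence $(A_k)_{k\ge 0}$ is nested and decreasing. Continuity of the Lebesgue measure from above then gives
$$
\lambda\Big(\bigcap_{k\ge 0}A_k\Big)=\lim_{k\to\infty}\lambda(A_k).
$$

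Next I would iterate the estimate in Lemma~\ref{Ck}(iv): from $\lambda(A_{k+1})\le\rho\,\lambda(A_k)$ for all $k\ge 0$, with $0\le\rho<1$ the constant of $(\ref{rho})$, a trivial induction yields $\lambda(A_k)\le\rho^k\lambda(A_0)=\rho^k$ for every $k\ge 0$, so $\lambda(A_k)\to 0$. Combined with the previous display this gives $\lambda\big(\bigcap_{k\ge 0}A_k\big)=0$, i.e. $\bigcap_{k\ge 0}A_k$ is a null set.

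Finally, since $W_1=I\setminus\bigcap_{k\ge 0}A_k$ and the removed set is null, $W_1$ has full Lebesgue measure in $I$; as $\bar I\setminus I=\{1\}$ is itself null, $W_1$ has full measure in $\bar I$ as well, which is the meaning of the assertion $W_1=\bar I$ almost surely.

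I do not expect any genuine obstacle here: all the work has already been done in Lemma~\ref{Ck}, and the only point that deserves an explicit word is the (harmless) reading of ``$W_1=\bar I$ almost surely'' as equality up to a Lebesgue-null set, the literal set $W_1$ being contained in $I=[0,1)$ so that the missing endpoint plays no role.
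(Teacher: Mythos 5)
Your proof is correct and follows the same route as the paper: iterate Lemma~\ref{Ck}(iv) to get $\lambda(A_k)\le\rho^k$, conclude $\lambda(\cap_{k\ge 0}A_k)=0$, and hence $W_1$ is of full measure. The extra remarks about continuity from above and the null endpoint $\{1\}$ are fine but are left implicit in the paper.
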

\begin{proof} By the item (iv) of Lemma \ref{Ck}, we have that $\lambda(A_{k})\le \rho^{k}$ for every $k\ge 0$, thus $\lambda(\cap_{k\ge 0} A_k)=0$. 
\end{proof}

\begin{lemma}\label{st} Let $h:\bar{I}\to\bar{I}$ be a Lipschitz map, then $h^{-1}(\{x\})$ is finite for almost every $x\in\bar{I}$.
\end{lemma}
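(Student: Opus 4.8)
The plan is to recognise the statement as an instance of the Banach indicatrix theorem and to prove the one inequality that is actually needed. Set $N(y)=\#\,h^{-1}(\{y\})\in\{0,1,2,\dots\}\cup\{\infty\}$ for $y\in\bar{I}$. I will show that $N$ is Borel measurable and that $\int_{\bar{I}}N\,{\rm d}\lambda$ is finite; once this is established, the set $\{y\in\bar{I}:N(y)=\infty\}$ is forced to be a $\lambda$-null set, which is precisely the assertion of the lemma.

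To construct $N$ and bound its integral I would approximate from below. Fix a nested sequence of partitions $P_1\subset P_2\subset\cdots$ of $[0,1]$ into dyadic intervals, so that the mesh of $P_m$ equals $2^{-m}\to 0$. For an interval $J$ of $P_m$, continuity of $h$ makes $h(J)$ an interval contained in $[\inf_J h,\sup_J h]$, and by definition $h^{-1}(\{y\})\cap J\neq\emptyset$ exactly when $y\in h(J)$. Hence $G_m:=\sum_{J\in P_m}\mathbf{1}_{h(J)}$ is a finite sum of indicators of intervals --- in particular Borel measurable --- and, since $\sum_{J\in P_m}\mathrm{osc}_J(h)$ does not exceed the total variation of $h$, which in turn is bounded by a Lipschitz constant $L$ of $h$,
$$
\int_{\bar{I}}G_m\,{\rm d}\lambda=\sum_{J\in P_m}\lambda\big(h(J)\big)\le\sum_{J\in P_m}\mathrm{osc}_J(h)\le L.
$$
Because $P_{m+1}$ refines $P_m$, every interval of $P_m$ meeting $h^{-1}(\{y\})$ contains an interval of $P_{m+1}$ that still meets it, so $G_m\le G_{m+1}$; let $G:=\lim_m G_m$, a Borel function with values in $[0,\infty]$. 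If $h^{-1}(\{y\})$ has $k$ distinct points then, once $2^{-m}$ is smaller than the least gap between them, these points lie in $k$ distinct intervals of $P_m$, whence $G_m(y)\ge k$; letting $m\to\infty$ gives $G\ge N$ everywhere on $\bar{I}$. By the monotone convergence theorem $\int_{\bar{I}}G\,{\rm d}\lambda=\lim_m\int_{\bar{I}}G_m\,{\rm d}\lambda\le L<\infty$, so $G<\infty$, and a fortiori $N<\infty$, $\lambda$-almost everywhere.

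I expect the only delicate points to be bookkeeping: one must keep the partitions nested (so that $G_m$ increases and $G=\lim_m G_m$ is measurable) and with mesh tending to $0$ (so that $G$ dominates the true cardinality $N$), and one should note that Lipschitzness is used only to keep the total variation finite. With both features in place the monotone convergence theorem finishes the argument. An alternative, if one prefers to quote a theorem, is to invoke the Banach indicatrix theorem directly, which yields simultaneously the Borel measurability of $N$ and the identity $\int_{\bar{I}}N\,{\rm d}\lambda=\mathrm{Var}(h)\le L$, from which the conclusion is immediate.
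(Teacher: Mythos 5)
Your proof is correct, and it takes a genuinely different route from the paper's. The paper disposes of this lemma in one line by citing Wells (1975), which establishes a Lipschitz analogue of the usual corollary of Sard's theorem for maps $\mathbb{R}^n\to\mathbb{R}^n$: the fibre over almost every point is discrete, hence finite on a compact domain. You instead give a self-contained one-dimensional argument in the spirit of the Banach indicatrix theorem: the dyadic approximants $G_m$ are measurable, nondecreasing in $m$, dominate the indicatrix $N$ in the limit, and have integral bounded by the total variation (at most the Lipschitz constant $L$), so $\int_{\bar{I}} N\,{\rm d}\lambda\le L<\infty$ by monotone convergence and $N<\infty$ almost everywhere. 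The delicate steps you flag all check out: nestedness of the $P_m$ gives $G_m\le G_{m+1}$ (distinct cells of $P_m$ meeting the fibre yield distinct cells of $P_{m+1}$ meeting it), mesh tending to zero gives $\lim_m G_m\ge N$ pointwise, and $\lambda\big(h(J)\big)\le\mathrm{osc}_J(h)$ is what converts the Lipschitz hypothesis (indeed, merely bounded variation, as you correctly observe) into integrability of $N$. Your argument buys elementarity and independence from an external reference; the paper's citation buys brevity and a statement valid for Lipschitz maps of $\mathbb{R}^n$ for any $n$, which is more than this lemma needs.
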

\begin{proof} See \cite[Theorem]{JW}.
\end{proof}

\begin{corollary}\label{st2} 
There exists a full set set $W_2\subset I$ such that 
$h^{-1}(\{x\})$ is a finite set for every $x\in W_2$ and $h\in\cup_{k\ge 0} \mathscr{C}_k$.
\end{corollary}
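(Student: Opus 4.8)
The plan is to combine Lemma \ref{st} with an elementary countability argument. The key observation is that the family $\mathscr{C}:=\bigcup_{k\ge 0}\mathscr{C}_k$ is countable: from the recursion (\ref{ckak}) one sees by induction that $\mathscr{C}_k$ has at most $n^k$ elements (this is exactly the content underlying item (i) of Lemma \ref{Ck}, since $A_k=\cup_{h\in\mathscr{C}_k}h(\bar I)$), so $\mathscr{C}$ is a countable union of finite sets and hence countable.

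Next I would note that every $h\in\mathscr{C}$ is either the identity or a finite composition $\phi_{i_1}\circ\cdots\circ\phi_{i_k}$ of maps from the IFS; since each $\phi_i:\bar I\to(0,1)$ is Lipschitz and a composition of Lipschitz maps is Lipschitz, $h:\bar I\to\bar I$ is Lipschitz. Lemma \ref{st} therefore applies to each such $h$ and gives that the set
$$
N_h=\{x\in\bar I:\ h^{-1}(\{x\})\ \text{is an infinite set}\}
$$
is a Lebesgue null set.

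Finally, I would set $W_2=I\setminus\bigcup_{h\in\mathscr{C}}N_h$. As the complement in $I$ of a countable union of null sets, $W_2$ is a full subset of $I$. By construction, for every $x\in W_2$ and every $h\in\bigcup_{k\ge 0}\mathscr{C}_k$ we have $x\notin N_h$, that is, $h^{-1}(\{x\})$ is finite, which is precisely the assertion of the corollary. There is essentially no serious obstacle here; the only points requiring (routine) care are recording that each $\mathscr{C}_k$ is finite so that $\mathscr{C}$ is countable, and invoking that the Lipschitz property passes to compositions so that Lemma \ref{st} is applicable to every element of $\mathscr{C}$.
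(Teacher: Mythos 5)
Your argument is correct and is exactly the one the paper has in mind: the paper's one-line proof invokes Lemma \ref{st} together with the finiteness of each $\mathscr{C}_k$, which is precisely the countability-plus-null-union argument you spell out. Nothing further is needed.
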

\begin{proof} This follows immediately from Lemma \ref{st} and from the fact that $\mathscr{C}_k$ is finite.
\end{proof}

Hereafter, let $W_1$ and $W_2$ be as in Lemma \ref{WW1} and Corollary \ref{st2}, respectively. Let
\begin{equation}\label{WW}
W_{\Phi}=W_1\cap W_2,\,\,\textrm{then}\,\, W_{\Phi}=I\,\,\textrm{almost surely}.
\end{equation}

 \begin{proposition}\label{mainar} 
For each $x \in W_{\Phi}$, $\displaystyle \bigcup_{k\ge 0}\bigcup_{h\in\mathscr{C}_k} h^{-1}(\{x\})$ is a finite subset of $ \bar{I} \; \big\backslash \bigcap_{k\ge 0} A_k$.
 \end{proposition}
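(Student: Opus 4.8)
The plan is to stratify the double union by the level $k$ and to show that only finitely many levels actually contribute. First I would exploit the hypothesis $x\in W_{\Phi}\subset W_1$: by Lemma~\ref{WW1}, $x\notin\bigcap_{k\ge0}A_k$, so there is an integer $N$ with $x\notin A_N$. Since the sets $A_k$ are nested (Lemma~\ref{Ck}(ii)), we get $x\notin A_k$ for every $k\ge N$. By the very definition of $A_k$ in (\ref{ckak}), $h(\bar I)\subset A_k$ whenever $h\in\mathscr{C}_k$, hence $x\notin h(\bar I)$, i.e. $h^{-1}(\{x\})=\emptyset$, for every $h\in\mathscr{C}_k$ with $k\ge N$. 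Therefore
$$
\bigcup_{k\ge0}\bigcup_{h\in\mathscr{C}_k}h^{-1}(\{x\})=\bigcup_{k=0}^{N-1}\bigcup_{h\in\mathscr{C}_k}h^{-1}(\{x\}).
$$

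For finiteness I would then use that each $\mathscr{C}_k$ is a finite set (it has at most $n^k$ elements, cf. Lemma~\ref{Ck}(i)), so that the right-hand side above is a finite union; and each set $h^{-1}(\{x\})$ occurring in it is finite because $x\in W_{\Phi}\subset W_2$ and $h\in\bigcup_{k\ge0}\mathscr{C}_k$, by Corollary~\ref{st2}. A finite union of finite sets being finite, the union in the statement is finite.

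It remains to check that this union is contained in $\bar I\setminus\bigcap_{k\ge0}A_k$. The key remark is that $\bigcap_{k\ge0}A_k$ is forward invariant under the IFS: for each $i$, Lemma~\ref{Ck}(iii) gives $\phi_i(A_{k-1})\subset A_k$ for every $k\ge1$, so $\phi_i\big(\bigcap_{k\ge0}A_k\big)\subset\bigcap_{k\ge1}A_k=\bigcap_{k\ge0}A_k$ (using Lemma~\ref{Ck}(ii)); a straightforward induction on $m$ then yields $h\big(\bigcap_{k\ge0}A_k\big)\subset\bigcap_{k\ge0}A_k$ for every $h\in\mathscr{C}_m$. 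Now take any $y$ in the double union, say $h(y)=x$ with $h\in\mathscr{C}_m$ for some $m$. If $y$ belonged to $\bigcap_{k\ge0}A_k$, then $x=h(y)$ would belong to $\bigcap_{k\ge0}A_k$ too, contradicting $x\in W_1$. Hence $y\notin\bigcap_{k\ge0}A_k$, which is exactly what is required.

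I do not expect a genuine obstacle here: the proof is a bookkeeping argument combining the three ingredients $W_1$, $W_2$, and the nested/invariant structure of the $A_k$. The only point that must be handled with care is the stratification — one must use $x\notin A_N$ (not merely $x\notin\bigcap_k A_k$) together with the inclusion $h(\bar I)\subset A_k$ for $h\in\mathscr{C}_k$ to eliminate all levels $k\ge N$, which is precisely what collapses an a priori infinite union into a finite one.
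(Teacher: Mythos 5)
Your proof is correct and follows essentially the same route as the paper's: the finiteness argument is the paper's contradiction argument rewritten in direct form (you explicitly exhibit the cutoff level $N$ where $x\notin A_k$ kills all higher levels, whereas the paper argues that an infinite union would force $x\in\bigcap_k A_k$), and the containment argument is likewise the paper's computation $x\in h_\ell(A_k)\subset A_{\ell+k}$ repackaged as forward-invariance of $\bigcap_k A_k$ under the IFS. Both versions rest on the same three ingredients — $W_1$, $W_2$, and the nested structure of the $A_k$ — so there is no substantive difference.
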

\begin{proof} 
Let $x\in W_{\Phi}$. Assume by contradiction that $\cup_{k\ge 0}\cup_{h\in\mathscr{C}_k} h^{-1}(\{x\})$ is an infinite set. By \mbox{Corollary \ref{st2}}, for every $k\ge 0$, the set $\bigcup_{h\in\mathscr{C}_k} h^{-1}(\{x\})$ is finite. Therefore, for infinitely many $k\ge 0$, the set $\bigcup_{h\in\mathscr{C}_k} h^{-1}(\{x\})$ is nonempty and $x \in A_k$. By the item (ii) of Lemma \ref{Ck}, $x\in \cap_{k\ge 0} A_k$, which contradicts the fact that $x\in W_1$. This proves the first claim.

Let $y\in \bigcup_{k\ge 0}\bigcup_{h\in\mathscr{C}_k} h^{-1}(\{x\})$, then there exist $\ell\ge 0$ and $h_\ell \in \mathscr{C}_\ell$ such that $x=h_\ell(y)$. Assume by contradiction that $y\in \cap_{k\ge 0} A_k$. Then  $x\in h_\ell ( A_k) \subset \cup_{h\in\mathscr{C}_\ell} h(A_k)= A_{\ell+k}$ for every $k\geq 0$, implying that $x\in \cap_{k\ge \ell} A_k \subset \cap_{k\ge 0} A_k$, which is a contradiction. This proves the second claim. \end{proof}

 \begin{theorem}\label{Qifinite}  
Let $(x_1,\ldots,x_{n-1})\in \Omega_{n-1}\cap W_{\Phi}^{n-1}$ and  $f=f_{\phi_1,\ldots,\phi_n,x_1,\ldots,x_{n-1}}$, then the set
\begin{equation}\label{tap}
Q=\bigcup_{i=1}^{n-1}\bigcup_{k\ge0} f^{-k}(\{x_i\})
\end{equation}
is finite. Moreover, $Q\subset I\setminus\cap_{k\ge 0} A_k$.
\end{theorem}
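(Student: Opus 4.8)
The plan is to show that $Q$ is contained in the finite set furnished by Proposition \ref{mainar}, applied to each of the $x_i$. First I would observe the basic mechanism: whenever $f(y)=z$, the definition (\ref{mapf}) says that $y$ lies in some interval $[x_{j-1},x_j)$ and $z=\phi_j(y)$, so $y\in\phi_j^{-1}(\{z\})$. Iterating, if $f^k(y)=x_i$ then there is a word $(j_1,\ldots,j_k)\in\{1,\ldots,n\}^k$ with $x_i=\phi_{j_k}\circ\cdots\circ\phi_{j_1}(y)$, and the composition $\phi_{j_k}\circ\cdots\circ\phi_{j_1}$ is precisely an element of $\mathscr{C}_k$ (using the two equivalent descriptions of $\mathscr{C}_{k+1}$ noted in the proof of Lemma \ref{Ck}). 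Hence
$$
f^{-k}(\{x_i\})\subset\bigcup_{h\in\mathscr{C}_k} h^{-1}(\{x_i\}),\qquad\text{so}\qquad
\bigcup_{k\ge 0} f^{-k}(\{x_i\})\subset\bigcup_{k\ge 0}\bigcup_{h\in\mathscr{C}_k} h^{-1}(\{x_i\}).
$$

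Now since $(x_1,\ldots,x_{n-1})\in W_\Phi^{n-1}$, each $x_i$ belongs to $W_\Phi$, so Proposition \ref{mainar} applies to every $x_i$: the right-hand set above is a finite subset of $\bar I\setminus\bigcap_{k\ge 0}A_k$. Therefore $\bigcup_{k\ge 0} f^{-k}(\{x_i\})$ is finite and disjoint from $\bigcap_{k\ge 0}A_k$ for each $i$, and $Q$, being a finite union over $i=1,\ldots,n-1$ of such sets, is itself finite and contained in $I\setminus\bigcap_{k\ge 0}A_k$. This gives both assertions at once.

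The one point needing care — and the only real obstacle — is the first inclusion: $f$ is genuinely a different map from the IFS maps $\phi_j$ (it is a selection among them determined by the partition), so one must check that a backward $f$-orbit really does realize $x_i$ as $h(y)$ for an honest $h\in\mathscr{C}_k$, rather than just for some branch-by-branch composition that might fail to lie in $\mathscr{C}_k$. This is immediate from the recursive definition (\ref{ckak}) once one writes out the word $(j_1,\ldots,j_k)$ and matches it against $\mathscr{C}_k=\{\phi_{j_k}\circ h\mid h\in\mathscr{C}_{k-1}\}=\{h\circ\phi_{j_1}\mid h\in\mathscr{C}_{k-1}\}$, but it is worth stating explicitly since it is the hinge that transfers the analysis of the dynamical system $f$ to the purely IFS-theoretic Proposition \ref{mainar}. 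Everything else is bookkeeping with finite unions.
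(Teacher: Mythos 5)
Your proof is correct and follows essentially the same route as the paper's: establish the inclusion $\bigcup_{k\ge 0}f^{-k}(\{x_i\})\subset\bigcup_{k\ge 0}\bigcup_{h\in\mathscr{C}_k} h^{-1}(\{x_i\})$ and then invoke Proposition \ref{mainar} for each $x_i\in W_\Phi$. You spell out the justification of that inclusion (via the word $(j_1,\ldots,j_k)$ and the recursive description of $\mathscr{C}_k$) more explicitly than the paper, which states it without comment, but the argument is identical in substance.
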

\begin{proof} 
Let $(x_1,\ldots,x_{n-1})\in \Omega_{n-1}\cap W_{\Phi}^{n-1}$.   
By Proposition \ref{mainar}, the set $\cup_{k\ge 0}\cup_{h\in\mathscr{C}_k} h^{-1}(\{x_i\})$ is finite for every $1\le i\le n-1$. 
Hence, as 
$$
\bigcup_{k\ge 0}f^{-k}(\{x_i\})\subset\bigcup_{k\ge 0}\bigcup_{h\in\mathscr{C}_k} h^{-1}(\{x_i\}), \quad 1\le i\le n,
$$
we have that $Q$ is also a finite set. Moreover, $Q\subset I\setminus \cap_{k\ge 0} A_k$ by Proposition \ref{mainar}. 
 \end{proof}
 
 The next corollary assures that the claim of Theorem \ref{Qifinite} holds if the partition $[x_{0},x_1)$, \ldots, $[x_{n-1},x_n)$ in (\ref{mapf}) is replaced by any partition $I_1,\ldots,I_n$ with each interval $I_i$ having endpoints 
$x_{i-1}$ and $x_i$.
 
 \begin{corollary}\label{Rem1} 
Let $f=f_{\phi_1,\ldots,\phi_n,x_1,\ldots,x_{n-1}}$ be as in Theorem \ref{Qifinite}. Let  $\tilde{f}:I\to I$ be a map having the following properties:
 \begin{itemize}
 \item [(P1)] $\tilde{f}(x)=f(x)$ for every $x\in (0,1)\setminus \{x_1,\ldots,x_{n-1}\}$;
 \item [(P2)] $\tilde{f}(x_i)\in \{\lim_{x\to x_{i}-} f(x),\lim_{x\to x_{i}+} f(x)\}$ for every $1\le i\le n-1$.
 \end{itemize}
 Then the set $\tilde{Q}=\bigcup_{i=1}^{n-1}\bigcup_{k\ge0} \tilde{f}^{-k}(\{x_i\})$ is finite.
 \end{corollary}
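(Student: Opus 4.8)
The plan is to reduce the claim to Theorem~\ref{Qifinite}, exploiting the fact that the proof of that theorem uses nothing about $f$ beyond the pointwise relation $f(x)\in\{\phi_1(x),\ldots,\phi_n(x)\}$, and that (P1)--(P2) yield the same relation for $\tilde f$ at every point of $I$ except possibly the endpoint $0$. First I would verify this relation for $\tilde f$: if $x\in(0,1)\setminus\{x_1,\ldots,x_{n-1}\}$ it is (P1) together with (\ref{mapf}); and if $x=x_i$ then, since $f$ coincides with $\phi_i$ on $[x_{i-1},x_i)$ and with $\phi_{i+1}$ on $[x_i,x_{i+1})$ while each $\phi_\ell$ is continuous on $\bar{I}$, the one-sided limits $\lim_{x\to x_i-}f(x)$ and $\lim_{x\to x_i+}f(x)$ appearing in (P2) equal $\phi_i(x_i)$ and $\phi_{i+1}(x_i)$, so $\tilde f(x_i)\in\{\phi_i(x_i),\phi_{i+1}(x_i)\}$. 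In particular $\tilde f$ maps $I\setminus\{0\}$ into $\bigcup_{\ell=1}^n\phi_\ell(\bar{I})\subset(0,1)$.

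The previous step shows $\tilde f^{-1}(\{0\})\subseteq\{0\}$, hence $\tilde f^{-k}(\{0\})\subseteq\{0\}$ for all $k\ge0$ by induction. Fixing $1\le i\le n-1$ and $x\in\tilde f^{-k}(\{x_i\})$ with $x\neq0$, and using $x_i\neq0$, I would deduce that $\tilde f^{j}(x)\neq0$ for all $0\le j\le k$; therefore $\tilde f^{j+1}(x)=\phi_{i_{j+1}}(\tilde f^{j}(x))$ for suitable $i_1,\ldots,i_k\in\{1,\ldots,n\}$, so $\tilde f^{k}(x)=h(x)$ with $h=\phi_{i_k}\circ\cdots\circ\phi_{i_1}\in\mathscr{C}_k$, and thus $x\in h^{-1}(\{x_i\})$. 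This gives, for every $k\ge0$,
\[
\tilde f^{-k}(\{x_i\})\ \subseteq\ \{0\}\cup\bigcup_{h\in\mathscr{C}_k}h^{-1}(\{x_i\}),
\]
which is the exact analogue of the inclusion used in the proof of Theorem~\ref{Qifinite}.

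To finish, since $(x_1,\ldots,x_{n-1})\in\Omega_{n-1}\cap W_{\Phi}^{n-1}$, each $x_i$ belongs to $W_{\Phi}$, so Proposition~\ref{mainar} makes $\bigcup_{k\ge0}\bigcup_{h\in\mathscr{C}_k}h^{-1}(\{x_i\})$ finite; adjoining the point $0$ and taking the union over $1\le i\le n-1$ shows $\tilde Q$ is finite. The only delicate point, and the sole difference from Theorem~\ref{Qifinite}, is the endpoint $0$, which (P1)--(P2) leave unconstrained: the remarks above show that $0$ can only contribute itself to $\tilde Q$ and cannot generate an infinite backward $\tilde f$-orbit, so it is harmless.
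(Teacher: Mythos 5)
Your proof is correct and ultimately invokes the same finiteness source as the paper (Proposition~\ref{mainar}), but arrives at the key inclusion by a different and in fact slightly more careful route. The paper's proof argues at the level of partitions: it observes that (P1) and (P2) force $\tilde f$ to coincide with some $\phi_i$ on each member of a partition $I_1,\ldots,I_n$ of $I$ whose $i$-th interval has endpoints $x_{i-1}$ and $x_i$, and then reads off $\tilde Q\subset\bigcup_i\bigcup_k\bigcup_{h\in\mathscr{C}_k}h^{-1}(\{x_i\})$ directly. You instead argue pointwise along backward orbits, establishing $\tilde f(x)\in\{\phi_1(x),\ldots,\phi_n(x)\}$ for every $x\in I\setminus\{0\}$ and composing to land in $\mathscr{C}_k$. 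This pointwise version has a small advantage: as you note, (P1)--(P2) say nothing about $\tilde f(0)$, so the paper's claim that $\tilde f\vert_{I_1}=\phi_1\vert_{I_1}$ at the point $0$ is not literally forced by the stated hypotheses (it is forced only under the implicit reading, suggested by the surrounding discussion, that $\tilde f$ arises from a genuine partition of $I$ into intervals). Your argument sidesteps this by showing that $0$ can only ever preimage itself under $\tilde f$, so the unconstrained value $\tilde f(0)$ contributes at most the single point $0$ to $\tilde Q$. Both approaches close the proof the same way, via Proposition~\ref{mainar} applied to each $x_i\in W_\Phi$.
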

 \begin{proof} 
The definition of $f$ given by (\ref{mapf}) together with
 the properties (P1) and (P2) assure that there exists a partition of $I$ into $n$ intervals 
 $I_1,\ldots,I_n$ such that, for every $1\le i\le n$, the interval $I_i$ has endpoints $x_{i-1}$ and $x_i$ and
 $\tilde{f}\vert_{I_i}=\phi_i\vert_{I_i}$. In particular, we have that
 $\tilde{Q}\subset\bigcup_{i=1}^{n-1} \bigcup_{k\ge 0}\bigcup_{h\in\mathscr{C}_k}h^{-1}(\{x_i\})$ which is a finite set by Proposition \ref{mainar}.
 \end{proof}
 
 In the next definition and in the next lemma, it is not assumed that the IFS is highly contractive.
    
  \begin{definition}\label{partition} 
Let $(x_1,\ldots,x_{n-1})\in\Omega_{n-1}$ and $f=f_{\phi_1,\ldots,\phi_n,x_1,\ldots,x_{n-1}}$ be such that the set $Q$ defined in (\ref{tap}) is finite. The  collection $\mathscr{P}=\{J_{\ell}\}_{\ell=1}^m$ of all connected components of $(0,1)\setminus Q$ is called the {\it invariant quasi-partition} of $f$. In this case,  we say that the PC $f$ {\it admits the invariant quasi-partition}.
  \end{definition}
  
The existence of the invariant quasi-partition plays a fundamental role in  this article. 
   
  \begin{lemma}\label{tmt} 
Let $f=f_{\phi_1,\ldots,\phi_n,x_1,\ldots,x_{n-1}}$ and $\mathscr{P}=\{J_{\ell}\}_{\ell=1}^m$ 
   be as in Definition \ref{partition}, then for every interval $J\in \mathscr{P}$ 
  there exists an interval $J' \in \mathscr{P}$ such that $f(J)\subset J'$.
 \end{lemma}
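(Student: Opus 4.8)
The plan is to show that $f(J)$ lands inside the open set $(0,1)\setminus Q=\bigcup_{\ell=1}^m J_\ell$ and is connected, so it must fit entirely inside a single component $J_\ell$.

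First I would record a structural remark about $Q$. Since the union in (\ref{tap}) includes the term $k=0$, we have $\{x_1,\ldots,x_{n-1}\}\subset Q$; hence no $J\in\mathscr{P}$ meets $\{x_1,\ldots,x_{n-1}\}$, and, being connected, each such $J$ is contained in one of the intervals $(x_{i-1},x_i)$, where $x_0=0$ and $x_n=1$. On that interval $f$ agrees with $\phi_i$, which is Lipschitz and maps $\bar{I}$ into $(0,1)$; consequently $f|_J$ is continuous and $f(J)$ is a connected subset of $(0,1)$.

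The key step is to prove $f(J)\cap Q=\emptyset$. Suppose, for contradiction, that some $y\in J$ satisfies $f(y)\in Q$. Then $f(y)\in f^{-k}(\{x_i\})$ for some $k\ge 0$ and $1\le i\le n-1$, so that $f^{k+1}(y)=f^{k}\big(f(y)\big)=x_i$, giving $y\in f^{-(k+1)}(\{x_i\})\subset Q$. This contradicts $y\in J\subset(0,1)\setminus Q$. Hence $f(J)\cap Q=\emptyset$, and combined with $f(J)\subset(0,1)$ this yields $f(J)\subset(0,1)\setminus Q=\bigcup_{\ell=1}^m J_\ell$.

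Finally, since $f(J)$ is connected and the $J_\ell$ are precisely the pairwise disjoint open connected components of $(0,1)\setminus Q$, the set $f(J)$ is contained in exactly one of them, say $J'$, which is the desired conclusion. I do not foresee a genuine obstacle here: the only point requiring care is that $J$ not straddle a partition point $x_i$, so that $f|_J$ is governed by the single continuous branch $\phi_i$, and this is exactly what the inclusion of the $k=0$ terms in $Q$ guarantees; the remaining content is just the observation that the preimage structure defining $Q$ makes $Q$ backward invariant under $f$.
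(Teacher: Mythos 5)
Your proof is correct and follows essentially the same route as the paper's: the heart of both arguments is the backward invariance $f^{-1}(Q)\subset Q$, which forces $f(J)\cap Q=\emptyset$, combined with the observation that $f(J)$ is connected so it must land inside a single component of $(0,1)\setminus Q$. You spell out the connectedness step (that $J$ avoids the breakpoints $x_i$ because they belong to $Q$, so $f|_J$ is a single continuous branch $\phi_i$), which the paper's terse proof leaves implicit; the substance is identical.
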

 \begin{proof} Assume that the claim of the lemma is false, then there exists $J \in \mathscr{P}$ such that $f(J)\cap  Q \neq\emptyset$. Hence, 
$J \cap f^{-1}(Q)  \neq\emptyset$.
However, $f^{-1}(Q)\subset Q$ implying that  $J \cap Q  \neq\emptyset$ which  contradicts the definition of $\mathscr{P}$.
 \end{proof}

As the next lemma shows, the existence of the invariant quasi-partition $\mathscr{P}$ implies the following weaker notion of periodicity. Let $d:I\to \{1,\ldots,n\}$ be the piecewise constant function defined by $d(x)=i$ if $x\in I_i$. The {\it itinerary} of the point $x\in I$ is the sequence of digits $d_0,d_1,d_2,\ldots$ defined by $d_k=d\left(f^k(x)\right)$. 
We say that {\it the itineraries of $f$ are eventually periodic} if the sequence $d_0,d_1,d_2,\ldots$ is eventually periodic for every $x\in I$. 

\begin{lemma}\label{ipartition} 
Let $(x_1,\ldots,x_{n-1})\in \Omega_{n-1}\cap W_{\Phi}^{n-1}$, then  all itineraries of $f=f_{\phi_1,\ldots,\phi_n,x_1,\ldots,x_{n-1}}$ are eventually periodic.
\end{lemma}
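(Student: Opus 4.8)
The plan is to use the invariant quasi-partition $\mathscr{P}=\{J_\ell\}_{\ell=1}^m$ (which exists by Theorem~\ref{Qifinite}, since $(x_1,\ldots,x_{n-1})\in\Omega_{n-1}\cap W_\Phi^{n-1}$ guarantees $Q$ is finite) together with Lemma~\ref{tmt}. First I would observe that Lemma~\ref{tmt} yields a well-defined \emph{transition map} $T:\{1,\ldots,m\}\to\{1,\ldots,m\}$ on the finite index set of $\mathscr{P}$, characterized by $f(J_\ell)\subset J_{T(\ell)}$. Since $\{1,\ldots,m\}$ is finite, every trajectory $\ell, T(\ell), T^2(\ell),\ldots$ under $T$ is eventually periodic: there exist $p=p(\ell)\ge 0$ and $q=q(\ell)\ge 1$ with $T^{p+q}(\ell)=T^p(\ell)$.

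Next I would relate the itinerary of a point to this combinatorial orbit. The key point is that each $J_\ell$ is a connected component of $(0,1)\setminus Q$, and since the partition points $x_1,\ldots,x_{n-1}$ all lie in $Q$, no $x_i$ lies in the interior of any $J_\ell$; hence each $J_\ell$ is entirely contained in one of the partition intervals, so the function $d$ is constant on $J_\ell$. Therefore, for any $x\in J_{\ell_0}$ with $\ell_0\in\{1,\ldots,m\}$, Lemma~\ref{tmt} gives $f^k(x)\in J_{T^k(\ell_0)}$ for all $k\ge 0$, and consequently the itinerary digit $d_k=d(f^k(x))$ depends only on $T^k(\ell_0)$; writing $d_k = c(T^k(\ell_0))$ where $c(\ell)$ is the common value of $d$ on $J_\ell$, eventual periodicity of $(T^k(\ell_0))_k$ transfers directly to eventual periodicity of $(d_k)_k$.

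Finally, I would handle the points of $I$ not covered by $\bigcup_\ell J_\ell$, namely the points of $Q\cup\{0\}$ (note $0\notin(0,1)$ but $0\in I$, and $Q\subset(0,1)$). Since $Q$ is finite and $f^{-1}(Q)\subset Q$, we have $f(Q)\subset\bar Q$ where... more carefully: for $x\in Q\cup\{0\}$, after finitely many steps the orbit either stays in the finite set $Q\cup\{0\}$ forever (in which case the itinerary is eventually periodic because an orbit in a finite set is eventually periodic), or it enters $(0,1)\setminus Q=\bigcup_\ell J_\ell$ at some time $k_0$, after which the previous paragraph applies to $f^{k_0}(x)$. Either way the full itinerary $d_0,d_1,\ldots$ is eventually periodic. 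The main obstacle — really the only subtle point — is the bookkeeping for points landing exactly on the discontinuity set $Q$: one must check that $Q\cup\{0\}$ is forward-invariant as a finite set or else the orbit escapes into the quasi-partition, and this is exactly what $f^{-1}(Q)\subset Q$ (used already in Lemma~\ref{tmt}) provides, since it forces $f(I\setminus Q)\subset I\setminus Q$, equivalently every orbit that ever leaves $Q$ never returns.
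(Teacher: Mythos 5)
Your proposal is correct and follows essentially the same route as the paper's own proof: invoke Theorem \ref{Qifinite} to obtain the quasi-partition, use Lemma \ref{tmt} to build the finite transition map (the paper calls it $\tau$), note that $d$ is constant on each $J_\ell$ so the itinerary factors through the transition map's eventually periodic orbit, and finally dispatch the finitely many exceptional points of $\{0\}\cup Q$ by the same dichotomy. Your extra observation that $f^{-1}(Q)\subset Q$ forces $f(I\setminus Q)\subset I\setminus Q$ is a harmless unpacking of Lemma \ref{tmt} and does not change the argument.
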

\begin{proof} 
By Theorem \ref{Qifinite}, $Q$ is finite, thus $f$ admits the  invariant quasi-partition $\mathscr{P}=\{J_{\ell}\}_{\ell=1}^m$ as in Definition \ref{partition}. By Lemma \ref{tmt}, there exists a map
$\tau:\{1,\ldots,m\}\to \{1,\ldots,m\}$ such that $f\left(J_\ell \right)\subset J_{\tau(\ell)}$ for every $1\le \ell \le m$. Let $1 \le \ell_0 \le m$ and $\{\ell_k\}_{k=0}^\infty$ be the sequence defined recursively by  
$\ell_{k+1}=\tau (\ell_k)$ for every  $k\ge 0$.
It is elementary that  the sequence $\{\ell_k\}_{k=0}^\infty$ is eventually periodic.
We have that $x_i\in Q$ (see (\ref{tap})) for every $1\le i \le n-1$, therefore, by (P1), there exists a unique map $\eta: \{1,\ldots ,m\} \to \{1,\ldots ,n\}$ satisfying $ J_{\ell} \subset I_{\eta(\ell)}$ for every $1 \le \ell \le m$,
Hence, the sequence $\{\eta(\ell_k)\}_{k=0}^\infty$ is eventually periodic. 
By definition, the itinerary of any $x \in  J_{\ell_0}$ is the sequence $\{\eta(\ell_k)\}_{k=0}^\infty$.

Now let $x\in \{0\}\cup Q$. If $\{x,f(x),f^2(x),\ldots\}\subset Q$, then the orbit of $x$ is eventually  periodic and so is its itinerary. Otherwise, there exist  $1\le \ell_0 \le m$ and $k\ge 1$ such that $f^k(x)\in J_{\ell_0}$. By the above, the itinerary of $f^k(x)$ is eventually periodic and so is that of $x$. This proves the lemma.
\end{proof}

 \begin{proof}[Proof of Theorem \ref{pr1}]
 Let $(x_1,\ldots,x_{n-1})\in\Omega_{n-1}\cap W_{\Phi}^{n-1}$ and $f=f_{\phi_1,\ldots,\phi_n,x_1,\ldots,x_{n-1}}$.
 By Theorem \ref{Qifinite}, $Q$ is finite, thus $f$ admits the  invariant quasi-partition $\mathscr{P}=\{J_{\ell}\}_{\ell=1}^m$ as in Definition \ref{partition}. Let $1\le \ell_0 \le m$ and $x\in  J_{\ell_0}$. In the proof of Lemma \ref{ipartition}, it was proved that the itinerary of $x$ in  $\mathscr{P}$, $\{\ell_k\}_{k=0}^{\infty}$,   is eventualy periodic. Therefore there exist an integer $s \ge 0$ and an even integer $p\ge 2$  such that 
  $\ell_{s}=\ell_{s+p}$. As $\mathscr{P}$ is invariant under $f$,
  $f^{p}(J_{\ell_s})\subset  J_{\ell_{s+p}}=J_{\ell_s}$. Hence, if $J_{\ell_s}=(a,b)$, there exist $0\le a\le c\le d\le b\le 1$ such that $\overline{f^{p}\left((a,b)\right)}= [c,d]$. We claim that  $c>a$.
Assume by contradiction that $c=a$. As $f^{p}\vert_{(a,b)}$ is a nondecreasing contractive map, there exist $0\le \eta<\epsilon$ such that
 $\overline{f^{p}((a, a+\epsilon))}= [a, a+\eta]$. By induction, for every integer $k\ge 1$, there exist
 $0\le \eta_k<\epsilon_k$ such that $\overline{f^{kp}((a, a+\epsilon_k))}= [a, a+\eta_k]$. Hence,
 $$ a\in\bigcap_{k\ge 1}\bigcup_{h\in\mathscr{C}_{kp}} h(I)=\bigcap_{k\ge 1} A_{kp}=
 \bigcap_{k\ge 0} A_k.
 $$
This contradicts the fact that $a\in\partial J_{\ell_s}\subset \{0,1\}\cup Q\subset I\setminus\cap_{k\ge 0} A_k$ (see \mbox{Theorem \ref{Qifinite}}). We conclude therefore that $c>a$. Analogously, $d<b$.
In this way, there exists $\xi>0$ such that
$f^{p}\left((a,b)\right)\subset (a+\xi,b-\xi)$.  As $f^{p}\vert_{(a,b)}$ is a continuous contraction, $f^{p}$ has a unique fixed point $z\in (a,b)$.
Notice that $O_f(z)$ is a periodic orbit of $f$. Moreover, it is clear that $\omega(x)=O_f(z)$ for every $x\in J_{\ell_0}$.

Now let $x\in I \setminus\cup_{\ell=1}^m J_{\ell}=\{0\} \cup Q$.
By the proof of Lemma \ref{ipartition}, either  $O_f(x)$ is contained in the finite set $I \setminus\cup_{\ell=1}^m J_{\ell}$ (and thus is eventually periodic)
or there exists  $k\geq 1$ such that $f^k(x)\in\cup_{\ell=1}^m J_{\ell}$. By the above, 
in either case, $O_f(x)$ is eventually periodic.
\end{proof}

 \section{Contractive Iterated Function Systems}\label{epi}
 
 In this section we prove the following improvement of Theorem \ref{pr1}.
   
\begin{theorem}\label{mr1}
Let $0\le\kappa<\frac12$ and $\Phi=\{\phi_1,\ldots,\phi_n\}$ be an IFS where each map $\phi_i:[0,1]\to (0,1)$ is  a $\kappa$-Lipschitz contraction, then there exists a full set 
$W_{\Phi}\subset I$ such that for every $(x_1,\ldots,x_{n-1})\in\Omega_{n-1}\cap W_{\Phi}^{n-1}$, the  PC $f_{\phi_1,\ldots,\phi_n,x_1,\ldots,x_{n-1}}$ admits the  invariant quasi-partition and is asymptotically periodic.
\end{theorem}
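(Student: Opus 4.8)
The plan is to keep the framework of Section \ref{IFS}: I would first show that, for Lebesgue‑a.e.\ choice of the partition points, the set $Q$ of (\ref{tap}) is finite — so that $f$ admits the invariant quasi‑partition — and then derive asymptotic periodicity exactly as in the proof of Theorem \ref{pr1}. The only place where that proof really used high contractivity is Lemma \ref{WW1}, where $\lambda\big(\bigcap_{k\ge0}A_k\big)=0$ was used both to make $W_1$ a full set and, via Theorem \ref{Qifinite}, to exclude a boundary fixed point in the endgame. When $\frac1n\le\kappa<\frac12$ the attractor $\Lambda:=\bigcap_{k\ge0}A_k$ may have positive measure (Section \ref{sexample}), so Lemma \ref{WW1} is not available; the new ingredient is a Fubini estimate in the parameter $(x_1,\dots,x_{n-1})$ that uses $2\kappa<1$ in an essential way.

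I would keep the full set $W_2\subset I$ of Corollary \ref{st2} (its proof uses only that each $\mathscr{C}_k$ is finite). If a partition point $x_i$ lies in $W_2$ then $f^{-k}(\{x_i\})\subset\bigcup_{h\in\mathscr{C}_k}h^{-1}(\{x_i\})$ is finite for every $k$, so the backward tree of $x_i$ under $f$ has finite levels; if moreover this tree is infinite it then contains, for every $m$, points $x_i=p_0,p_1,\dots,p_m$ with $f(p_{j+1})=p_j$ and $p_{j+1}\in I_{d(p_{j+1})}$. Now fix $\ell$ and fix all partition points except $x_\ell$, and let $\widehat F$ be the set‑valued map on $[0,1]$ obtained from $f$ by letting \emph{both} branches $\phi_\ell$ and $\phi_{\ell+1}$ act on the whole interval $I_\ell\cup I_{\ell+1}=[x_{\ell-1},x_{\ell+1})$ while keeping $\phi_i$ on $I_i$ for $i\notin\{\ell,\ell+1\}$; by construction $\widehat F$ does not involve $x_\ell$. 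Each real backward step above uses a branch whose domain is contained in the corresponding enlarged domain of $\widehat F$, so every such $p_0,\dots,p_m$ is a backward $\widehat F$‑orbit and hence an infinite backward tree of $x_\ell$ forces $x_\ell\in\widehat F^{\,m}([0,1])$ for all $m$. On the other hand, for every measurable $S\subset[0,1]$,
$$
\lambda\big(\widehat F(S)\big)\le\kappa\!\!\sum_{i\notin\{\ell,\ell+1\}}\!\!\lambda(S\cap I_i)+2\kappa\,\lambda\big(S\cap[x_{\ell-1},x_{\ell+1})\big)\le\kappa\,\lambda(S)+\kappa\,\lambda(S)=2\kappa\,\lambda(S),
$$
using $\lambda(\phi_i(E))\le\kappa\,\lambda(E)$, $\sum_i\lambda(I_i)=1$ and $I_\ell\cup I_{\ell+1}=[x_{\ell-1},x_{\ell+1})$. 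Iterating, $\lambda\big(\widehat F^{\,m}([0,1])\big)\le(2\kappa)^m\to0$ — and this is the only step needing $\kappa<\frac12$ rather than just $\kappa<1$. Thus $N_\ell:=\bigcap_{m\ge0}\widehat F^{\,m}([0,1])$ is a null set independent of $x_\ell$, and any $x_\ell$ with infinite backward tree lies in $(I\setminus W_2)\cup N_\ell$. Integrating over $x_\ell$, then over the other coordinates, and taking the union over $\ell$, the set of tuples in $\Omega_{n-1}$ for which some $x_i$ has infinite backward tree is Lebesgue‑null; off it, $Q$ is finite.

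With $Q$ finite, the arguments of Lemma \ref{tmt} and Lemma \ref{ipartition} give the invariant quasi‑partition $\mathscr P$ and eventually periodic itineraries, and the endgame of the proof of Theorem \ref{pr1} applies up to the claim that the period‑$p$ interval $J_{\ell_s}=(a,b)$ satisfies $\overline{f^{\,p}((a,b))}=[c,d]$ with $c>a$, $d<b$. In place of the now‑unavailable fact $a,b\notin\bigcap_kA_k$, one observes that $c=a$ (resp.\ $d=b$) forces an endpoint of $J_{\ell_s}$ to be the unique fixed point of the contraction $\phi_{\eta(\ell_{s+p-1})}\circ\cdots\circ\phi_{\eta(\ell_s)}$; since $Q$ is finite and there are only countably many finite compositions of $\phi_1,\dots,\phi_n$, the parameters for which some point of $Q$ coincides with the fixed point of some such composition form a null set (by a Fubini argument like the one above — the relevant maps of a single coordinate are piecewise Lipschitz with constant $<1$, hence meet the diagonal at most countably often). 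Discarding it, $f^{\,p}|_{\overline{J_{\ell_s}}}$ has its fixed point $z$ in $(a,b)$, $O_f(z)$ is a genuine periodic orbit, and one concludes as in Theorem \ref{pr1} that $\omega_f(x)$ is a periodic orbit for every $x\in I$. Taking $W_\Phi$ to be $W_2$ intersected with the full sets above yields the statement.

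The crux is the choice of $\widehat F$: it must at once forget the coordinate being integrated out and still lose Lebesgue measure under iteration, and this balance is exactly what pins the threshold at $\frac12$ (two overlapping branches, each contracting by less than $\frac12$, still lose measure). I expect the remaining difficulties — the Fubini bookkeeping, and in the endgame the fact that the one‑sided limits of $f$ at the right endpoints of quasi‑partition intervals need not equal the value of $f$ there — to be routine but delicate.
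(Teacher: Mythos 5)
Your central estimate is sound and in fact isolates the same mechanism the paper uses: when you allow two neighbouring branches to overlap, the worst‑case measure loss per step is $2\kappa$, and this is exactly where the threshold $\kappa<\tfrac12$ comes from. The paper realizes this by \emph{clipping}: on a box $V(z)$ around a rational centre $z$ it replaces $\{\phi_i\}$ by a fixed IFS $\Upsilon=\{\varphi_i\}$ whose maps are constant outside $[z_{i-1}-\delta,z_i+\delta]$, so that at every point at most two derivatives are nonzero and $\sum_i|D\varphi_i|\le 2\kappa<1$; then Theorem~\ref{pr1} is applied verbatim to $\Upsilon$. Your $\widehat F$, with $\phi_\ell,\phi_{\ell+1}$ both acting on $[x_{\ell-1},x_{\ell+1})$, is the set‑valued analogue of that clipping, and your bound $\lambda(\widehat F(S))\le 2\kappa\,\lambda(S)$ is the analogue of Lemma~\ref{Ck}(iv). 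Your endgame (discarding the countable set of parameters for which a point of $Q$ is a fixed point of some $h\in\bigcup_k\mathscr C_k$, or an image $h'(\mathrm{Fix}\,h)$ of one) is a legitimate substitute for the paper's use of $Q\subset I\setminus\bigcap_kA_k$; do note, though, that the correct assertion is simply that the fixed point of $f^p|_{\overline{J_{\ell_s}}}$ must not be an endpoint of $J_{\ell_s}$, i.e.\ that $a$ and $b$ themselves must not be fixed by the composition — ``$c=a$'' does not by itself force $h(a)=a$ when $h$ is not monotone.

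The genuine gap is the shape of the exceptional set. Your Fubini argument shows that the set of \emph{tuples} $(x_1,\dots,x_{n-1})\in\Omega_{n-1}$ for which some $Q_i$ is infinite has Lebesgue measure zero, because the null set $N_\ell=\bigcap_m\widehat F^m([0,1])$ against which you integrate out $x_\ell$ depends on the remaining coordinates $x_i$, $i\neq\ell$, through the partition entering $\widehat F$. Theorem~\ref{mr1}, however, claims a single full set $W_\Phi\subset I$ with the conclusion holding on $\Omega_{n-1}\cap W_\Phi^{n-1}$, and that product structure is load‑bearing in Section~\ref{gc}: Corollary~\ref{cmr1} intersects the sets $W_\Psi\subset I$ over the countable family $\mathscr I$, and Lemma~\ref{toconclude1} needs the preimages $y_1,\dots,y_{r-1}$ of the $x_i$ (whose number $r$ varies with the configuration) to lie in the \emph{same} full subset $W$ of $I$; a null exceptional set in $\Omega_{n-1}$ that is not a finite‑coordinate product cannot be propagated this way. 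The paper sidesteps this precisely by localizing to countably many rational boxes $V(z)$, working with a \emph{fixed} IFS $\Upsilon(z)$ on each box so that Theorem~\ref{pr1} hands back a product set $W_{\Upsilon(z)}^{n-1}$, and then taking $W_\Phi=\bigcap_z W_{\Upsilon(z)}$. As written, your argument proves the ``almost every tuple'' form of the statement, which is enough for Theorem~\ref{main}'s asymptotic‑periodicity claim read in isolation, but not the form of Theorem~\ref{mr1} that the rest of the paper consumes; to repair it you would essentially be led back to a discretization of the parameter box, i.e.\ to the clipping construction.
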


Theorem \ref{mr1} will be deduced from Theorem \ref{pr1} in the following way.
First, we show that in some neighborhood $V$ of every point $(x_1,\ldots,x_{n-1})\in\Omega_{n-1}$,  the IFS $\Phi$ can be replaced by a highly contractive IFS $\Upsilon$. Then we show that this substitution suffices to prove  \mbox{Theorem \ref{mr1}}.

Hereafter, let  $(x_1,\ldots,x_{n-1})\in \Omega_{n-1}$ be fixed. Set  $x_0=0$, $x_n=1$,
\begin{equation}\label{delta} 
 \delta=\min_{1\le i\le n}  \frac{x_{i}-x_{i-1}}{3}\; \textrm{and} \;\;
V(x_1,\ldots,x_{n-1})=\{(y_1,\ldots,y_{n-1})\in\Omega_{n-1}: \vert y_i-x_i\vert< \delta,\forall i\}. 
\end{equation}

In what follows, let $0\le \kappa<\frac12$ and $\phi_1,\ldots,\phi_n:[0,1]\to (0,1)$ be $\kappa$-Lipschitz contractions.  Let
$\Upsilon=\{\varphi_1,\ldots,\varphi_n\}$ be the IFS defined by 
$$
\varphi_1(x)=\begin{cases} \phi_1(x)\,\,\phantom{aaaa}\textrm{, if}\,\,x\in[0,x_{1}+\delta] \\ \phi_1(x_1+\delta)\,\,\phantom{}\textrm{, if}\,\,x\in[x_{1}+\delta,1] 
\end{cases},\; \varphi_n(x)=\begin{cases}\phi_n(x_{n-1}-\delta)\,\,\phantom{}\textrm{, if}\,\,x\in[0,x_{n-1}-\delta] \\   \phi_n(x)\,\,\phantom{aaaaaa}\textrm{, if}\,\,x\in[x_{n-1}-\delta,1] 
\end{cases}
$$
$$
\varphi_i(x)=\begin{cases} \phi_i(x_{i-1}-\delta)\,\,\textrm{, if}\,\,x\in [0,x_{i-1}-\delta]\\ \phi_i(x)\,\,\phantom{aaaaaa}\textrm{, if}\,\,x\in[x_{i-1}-\delta,x_{i}+\delta] \\ \phi_i(x_i+\delta)\,\,\phantom{aa}\textrm{, if}\,\,x\in [x_{i}+\delta,1]
\end{cases},\quad 2\le i \le n-1.
$$
\begin{figure}[h]
\begin{center}
\includegraphics[width=\linewidth]{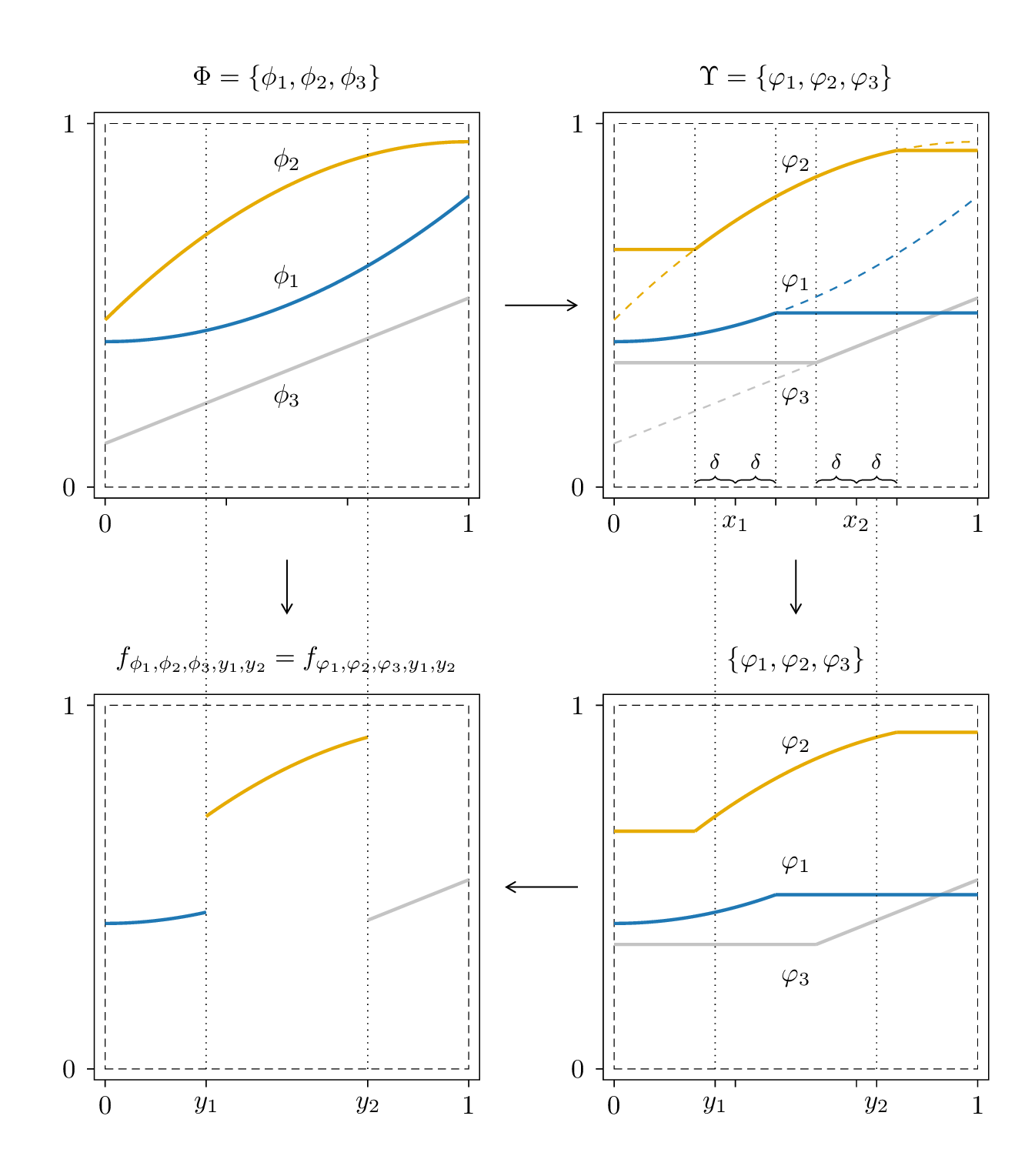}
\caption{Relations between the IFS $\{\phi_1,\phi_2,\phi_3\}$ and $\{\varphi_1,\varphi_2,\varphi_3\}$.}\label{fig:ifss}
\end{center}
\end{figure}

A scheme illustrating the construction of the
IFS $\{\varphi_1,\ldots,\varphi_n\}$ from the IFS $\{\phi_1,\ldots,\phi_n\}$ is shown in Figure \ref{fig:ifss}.
 
 \begin{lemma}\label{ispecial} 
The IFS $\Upsilon=\{\varphi_1,\ldots,\varphi_n\}$ is highly contractive.
 \end{lemma}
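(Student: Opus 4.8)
The plan is to verify condition (\ref{rho}) directly for the IFS $\Upsilon=\{\varphi_1,\ldots,\varphi_n\}$ with the constant $\rho=n\kappa$, which is strictly less than $\frac n2\le\frac n2$\,---\,wait, that is not less than $1$ in general. So the first thing I would do is reconsider: the gain must come not from the global Lipschitz bound but from the fact that the $\varphi_i$ are \emph{locally constant} outside a small window, so that at almost every point $x\in I$ only \emph{few} of the derivatives $D\varphi_i(x)$ are nonzero. Concretely, each $\varphi_i$ is constant on $[0,x_{i-1}-\delta]$ and on $[x_i+\delta,1]$, so $D\varphi_i(x)=0$ unless $x\in[x_{i-1}-\delta,x_i+\delta]$. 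The point is to count, for a fixed $x$, how many indices $i$ satisfy $x\in[x_{i-1}-\delta,x_i+\delta]$.

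First I would show that the intervals $[x_{i-1}-\delta,x_i+\delta]$, $1\le i\le n$, have the property that no point of $I$ lies in more than two of them. This is where the choice $\delta=\min_i\frac{x_i-x_{i-1}}{3}$ in (\ref{delta}) is used: since each gap $x_i-x_{i-1}\ge 3\delta$, the enlarged interval $[x_{i-1}-\delta,x_i+\delta]$ overshoots each original endpoint by only $\delta$, which is at most one third of the neighbouring gap; hence $[x_{i-1}-\delta,x_i+\delta]$ meets $[x_i-\delta,x_{i+1}+\delta]$ only in the short overlap $[x_i-\delta,x_i+\delta]$ and is disjoint from $[x_{i+1}-\delta,x_{i+2}+\delta]$ and all later ones. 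Therefore for every $x\in I$ at most two of the derivatives $D\varphi_1(x),\ldots,D\varphi_n(x)$ are nonzero, and each such nonzero derivative has absolute value at most $\kappa$ because $\varphi_i$ agrees with the $\kappa$-Lipschitz map $\phi_i$ on the window where it is nonconstant (and a Lipschitz map has $|D\varphi_i|\le\kappa$ a.e.\ by Rademacher). Consequently
\[
\vert D\varphi_1(x)\vert+\cdots+\vert D\varphi_n(x)\vert\le 2\kappa<1
\]
for almost every $x\in I$, so (\ref{rho}) holds with $\rho=2\kappa<1$, which is exactly the definition of a highly contractive IFS.

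I would also make two small checks to be safe: that each $\varphi_i$ genuinely maps $[0,1]$ into $(0,1)$ (it does, since its range is contained in the range of $\phi_i$, namely a compact subset of $(0,1)$, together with finitely many values $\phi_i(x_{i-1}-\delta)$, $\phi_i(x_i+\delta)$ which also lie in $(0,1)$), and that each $\varphi_i$ is Lipschitz (it is a concatenation of a $\kappa$-Lipschitz map with constant pieces, hence $\kappa$-Lipschitz). The only genuinely delicate point is the combinatorial overlap count, i.e.\ confirming the ``at most two'' bound from the definition of $\delta$; everything else is bookkeeping. Note that $2\kappa<1$ is precisely why the hypothesis $\kappa<\frac12$ (rather than $\kappa<\frac1n$) suffices here, which is the whole improvement sought in Section \ref{epi}.
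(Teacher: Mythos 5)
Your proof is correct and takes essentially the same route as the paper: the key step in both is that, by the choice of $\delta$ in (\ref{delta}), at almost every $x$ at most two consecutive $\varphi_i$ have nonzero derivative (each bounded by $\kappa$), giving $\sum_i |D\varphi_i(x)|\le 2\kappa<1$. The paper states this bound directly as $\max_{1\le i\le n-1}(|D\phi_i(x)|+|D\phi_{i+1}(x)|)\le 2\kappa$ and leaves the overlap count implicit, whereas you spell it out; the substance is identical.
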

\begin{proof} 
It is clear that each $\varphi_i:[0,1]\to (0,1)$ is a Lipschitz contraction, therefore for almost every $x\in I$ and $1\le i\le n$, $D\varphi_i(x)$ exists. By the definition of $\varphi_i$, for almost every $x\in I$, we have that
$$
\vert D\varphi_1(x)\vert+\cdots+\vert D\varphi_n(x)\vert\le \max_{1\le i\le n-1} \left(\vert D\phi_{i}(x)\vert+\vert D\phi_{i+1}(x)\vert\right)\leq 2\kappa<1.
$$
Therefore, $\{\varphi_1,\ldots,\varphi_n\}$ is highly contractive which concludes the proof.
\end{proof}

In what follows, let $V=V(x_1,\ldots,x_{n-1})$ be as in (\ref{delta}).

\begin{lemma}\label{replace} 
For every $(y_1,\ldots,y_{n-1})\in V$,  
$$
f_{\varphi_1,\ldots,\varphi_n,y_1,\ldots,y_{n-1}}= f_{\phi_1,\ldots,\phi_n,y_1,\ldots,y_{n-1}}.
$$
\end{lemma}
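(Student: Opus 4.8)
The plan is to unwind the definitions on both sides and check that they agree pointwise on each piece of the common partition determined by $y_1,\ldots,y_{n-1}$. Fix $(y_1,\ldots,y_{n-1})\in V$ and set $y_0=0$, $y_n=1$. Both maps are defined by the recipe \eqref{mapf}: on $[y_{i-1},y_i)$ the map $f_{\phi_1,\ldots,\phi_n,y_1,\ldots,y_{n-1}}$ equals $\phi_i$, while $f_{\varphi_1,\ldots,\varphi_n,y_1,\ldots,y_{n-1}}$ equals $\varphi_i$. So it suffices to prove that $\varphi_i(x)=\phi_i(x)$ for every $x\in[y_{i-1},y_i)$ and every $1\le i\le n$.

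The key step is the elementary interval inclusion $[y_{i-1},y_i)\subset [x_{i-1}-\delta,\,x_i+\delta]$ (with the left endpoint replaced by $0$ when $i=1$ and the right endpoint by $1$ when $i=n$), on which $\varphi_i$ is defined to coincide with $\phi_i$. This follows from the definition of $V$ in \eqref{delta}: membership gives $|y_{i-1}-x_{i-1}|<\delta$ and $|y_i-x_i|<\delta$, hence $y_{i-1}>x_{i-1}-\delta$ and $y_i<x_i+\delta$; the boundary cases $i=1$ and $i=n$ hold because $y_0=0=x_0$ and $y_n=1=x_n$ are fixed, so those endpoints are automatically in the first/last pieces where $\varphi_1=\phi_1$, resp.\ $\varphi_n=\phi_n$. (The factor $3$ in the definition of $\delta$ is not even needed here; $\delta<x_i-x_{i-1}$ would suffice for this lemma, but the stronger bound is presumably used elsewhere.) Once this inclusion is in hand, the first case in the piecewise definition of each $\varphi_i$ applies throughout $[y_{i-1},y_i)$, giving $\varphi_i|_{[y_{i-1},y_i)}=\phi_i|_{[y_{i-1},y_i)}$.

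Assembling these pointwise equalities over $i=1,\ldots,n$ covers all of $I=[0,1)$ and yields $f_{\varphi_1,\ldots,\varphi_n,y_1,\ldots,y_{n-1}}=f_{\phi_1,\ldots,\phi_n,y_1,\ldots,y_{n-1}}$, completing the proof. There is essentially no obstacle here: the only thing to be careful about is the bookkeeping at the two extreme indices $i=1$ and $i=n$, where $\varphi_1$ and $\varphi_n$ have only a two-branch definition rather than three, but since $y_0$ and $y_n$ are pinned to $0$ and $1$ the relevant inclusions $[0,y_1)\subset[0,x_1+\delta]$ and $[y_{n-1},1)\subset[x_{n-1}-\delta,1]$ are immediate from $y_1<x_1+\delta$ and $y_{n-1}>x_{n-1}-\delta$.
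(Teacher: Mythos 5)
Your proof is correct and follows essentially the same route as the paper's: fix $(y_1,\ldots,y_{n-1})\in V$, set $K_i=[y_{i-1},y_i)$, derive the inclusions $K_1\subset[0,x_1+\delta]$, $K_n\subset[x_{n-1}-\delta,1]$, and $K_i\subset[x_{i-1}-\delta,x_i+\delta]$ for $2\le i\le n-1$ from $|y_j-x_j|<\delta$, and conclude $\varphi_i|_{K_i}=\phi_i|_{K_i}$ by the piecewise definition of $\varphi_i$. Your side remark that the factor $3$ in $\delta$ is not needed here is accurate (it is used later, in the proof of Theorem \ref{mr1}, to cover $\Omega_{n-1}$ by the rational neighborhoods $V(z_1,\ldots,z_{n-1})$).
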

 \begin{proof} 
Let $(y_1,\ldots,y_{n-1})\in V$, $y_0=0$ and $y_n=1$. Set $K_i=[y_{i-1},y_i)$ for  $1\le i\le {n}$.  Since $\vert y_i-x_i\vert<\delta$, we have that
$$
K_1\subset [0,x_1+\delta],  K_n\subset [x_{n-1}-\delta,1]\,\,\textrm{and}\,\, K_i\subset [x_{i-1}-\delta,x_i+\delta],\quad 2\le i\le n-1.
$$ 
This together with the definition of $\varphi_i$ yields $\varphi_i\vert_{K_i}=\phi_i\vert_{K_i}$, for every $1\le i\le n$.
 \end{proof}
 
 Now we will apply the results of Section \ref{IFS} to the highly contractive IFS $\Upsilon=\{\varphi_1,\ldots,\varphi_n\}$ (see Lemma \ref{ispecial}). With respect to such IFS, 
 let $W_{\Upsilon}\subset I$ be the full set defined in equation (\ref{WW}). 
 Notice that all the claims in Section \ref{IFS} hold true for the IFS $\Upsilon$. In particular, we have that
 $V\cap W_{\Upsilon}^{n-1}=V$ almost surely. 
 
 \begin{theorem}\label{lmim} 
For every
 $(y_1,\ldots,y_{n-1})\in V\cap W_{\Upsilon}^{n-1}$, $f= f_{\phi_1,\ldots,\phi_n,y_1,\ldots,y_{n-1}}$ admits the  invariant quasi-partition and is asymptotically periodic.
 \end{theorem}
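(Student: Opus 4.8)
The plan is to combine Lemma \ref{ispecial}, Lemma \ref{replace}, and the full strength of the results already proved in Section \ref{IFS} for the highly contractive IFS $\Upsilon$. First I would observe that since $\Upsilon=\{\varphi_1,\ldots,\varphi_n\}$ is highly contractive by Lemma \ref{ispecial}, Theorem \ref{pr1} applies to it: there is a full set $W_{\Upsilon}\subset I$ (the one from \eqref{WW}, built from $\Upsilon$) such that for every $(y_1,\ldots,y_{n-1})\in\Omega_{n-1}\cap W_{\Upsilon}^{n-1}$ the PC $f_{\varphi_1,\ldots,\varphi_n,y_1,\ldots,y_{n-1}}$ is asymptotically periodic, and moreover, by Theorem \ref{Qifinite}, its associated set $Q$ is finite, so it admits the invariant quasi-partition in the sense of Definition \ref{partition}.

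Next I would fix $(y_1,\ldots,y_{n-1})\in V\cap W_{\Upsilon}^{n-1}$. Since $V\subset\Omega_{n-1}$, this point lies in $\Omega_{n-1}\cap W_{\Upsilon}^{n-1}$, so the conclusions of the previous paragraph hold for $f_{\varphi_1,\ldots,\varphi_n,y_1,\ldots,y_{n-1}}$. Now the key point is Lemma \ref{replace}: because $(y_1,\ldots,y_{n-1})\in V=V(x_1,\ldots,x_{n-1})$, we have the exact identity of maps
$$
f_{\phi_1,\ldots,\phi_n,y_1,\ldots,y_{n-1}} = f_{\varphi_1,\ldots,\varphi_n,y_1,\ldots,y_{n-1}}.
$$
Thus $f$ is literally the same map that we already know (from the $\Upsilon$-analysis) admits the invariant quasi-partition and is asymptotically periodic. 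Transporting both properties across this equality gives exactly the statement of Theorem \ref{lmim}.

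I do not anticipate a genuine obstacle here: the theorem is essentially a bookkeeping consequence of the two lemmas plus Theorem \ref{pr1} and Theorem \ref{Qifinite}. The one point deserving a sentence of care is making sure the full set $W_{\Upsilon}$ is the \emph{same} object used in the statement --- i.e., that ``all claims in Section \ref{IFS} hold true for the IFS $\Upsilon$'' is invoked correctly, so that $Q$ (defined via $f$, hence via the identical map $f_{\varphi_1,\ldots,\varphi_n,y_1,\ldots,y_{n-1}}$) is finite and its complement's connected components form the invariant quasi-partition. Since the equality of maps in Lemma \ref{replace} is an honest equality of functions $I\to I$, every dynamically defined object ($Q$, the quasi-partition $\mathscr{P}$, the $\omega$-limit sets) coincides for the two parametrizations, and the transfer is immediate.
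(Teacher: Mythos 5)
Your proposal is correct and matches the paper's proof essentially verbatim: both invoke Lemma \ref{ispecial} to get high contractivity of $\Upsilon$, then Theorem \ref{pr1} and Theorem \ref{Qifinite} for $\Upsilon$, and finally Lemma \ref{replace} to identify $f_{\phi_1,\ldots,\phi_n,y_1,\ldots,y_{n-1}}$ with $f_{\varphi_1,\ldots,\varphi_n,y_1,\ldots,y_{n-1}}$ and transfer the conclusions. No gap.
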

 \begin{proof} 
Let $(y_1,\ldots,y_{n-1})\in V\cap W_{\Upsilon}^{n-1}$.
 By Lemma \ref{replace}, $f=f_{\varphi_1,\ldots,\varphi_n,y_1,\ldots,y_{n-1}}$. By \mbox{Lemma \ref{ispecial}}, we have that
 Theorem \ref{pr1} and Theorem \ref{Qifinite} hold for the IFS $\Upsilon$. Hence,
 for every $(y_1,\ldots,y_{n-1})\in V\cap W_{\Upsilon}^{n-1}$, $f$ admits the  invariant quasi-partition and is asymptotically periodic.  
 \end{proof}

We stress that, in the previous results, $\delta=\min_{1\le i\le n} (x_{i}-x_{i-1})/{3}$,
the set $V$ and the IFS $\Upsilon$ depend on the point $(x_1,\ldots,x_{n-1})$. For this reason,
in the next proof, we replace $V$ and $W_{\Upsilon}$ by $V(x_1,\ldots,x_{n-1})$ and $W(x_1,\ldots,x_{n-1})$, respectively.
\medskip

\noindent
{\bf Proof of Theorem \ref{mr1}}. 
Set $\Phi=\{\phi_1,\ldots,\phi_n\}$.
 First we show that
\begin{equation}\label{grande}
\bigcup_{(z_1,\ldots,z_{n-1}) \in \Omega_{n-1}\cap \mathbb{Q}^{n-1}} V(z_1,\ldots,z_{n-1}) = \Omega_{n-1}.
\end{equation}
 Let $(x_1,\ldots,x_{n-1}) \in \Omega_{n-1}$ and  $\delta_0=\delta(x_1,\ldots,x_{n-1})$. Let  $(z_1,\ldots,z_{n-1}) \in \Omega_{n-1}\cap \mathbb{Q}^{n-1}$ be such that  $\vert z_i- x_i \vert < \frac 12 {\delta_0}$ for every $1\le i \le n-1$.
 Set $z_0=0$ and $z_n=1$.
We have that for every $1\le j \le n$, $z_{j} - z_{j-1}> (x_j-\frac 12 {\delta_0}) - (x_{j-1}+\frac 12 {\delta_0})\ge 3 {\delta_0} - {\delta_0} =2 {\delta_0}$. 
In this way, $\delta(z_1,\ldots,z_{n-1})> \frac 23 \delta_0 > \frac 12 \delta_0$, thus $(x_1,\ldots,x_{n-1}) \in V(z_1,\ldots,z_{n-1})$. 
This proves (\ref{grande}).
 
Let $(z_1,\ldots,z_{n-1}) \in \Omega_{n-1}\cap \mathbb{Q}^{n-1}$ and let $W(z_1,\ldots,z_{n-1})$  be the full set in $I$ defined  by (\ref{WW}).  By Theorem \ref{lmim}, for every 
$(y_1,\ldots,y_{n-1}) \in V(z_1,\ldots,z_{n-1}) \cap W(z_1,\ldots,z_{n-1})^{n-1}$,
the map $ f_{\phi_1,\ldots,\phi_n,y_1,\ldots,y_{n-1}}$ admits the  invariant quasi-partition and is asymptotically periodic. The denumerable intersection
\begin{equation}\label{w}
W_{\Phi}=\bigcap_{(z_1,\ldots,z_{n-1}) \in \Omega_{n-1}\cap \mathbb{Q}^{n-1}} W(z_1,\ldots,z_{n-1})
\end{equation}
is a full set
 and, for every $(y_1,\ldots,y_{n-1}) \in V(z_1,\ldots,z_{n-1})\cap W_{\Phi}^{n-1}$, the map $ f_{\phi_1,\ldots,\phi_n,y_1,\ldots,y_{n-1}}$ admits the  invariant quasi-partition and is asymptotically periodic.
This together with (\ref{grande}) conclude the proof.
\cqd

\section{Asymptotic periodicity: the general case}\label{gc}

Throughout this section, let $0\le\rho<1$ and $\phi_1,\ldots,\phi_n:[0,1]\to (0,1)$ be $\rho$-Lipschitz contractions. Let $k\ge 1$ be such that $\rho^k<\frac12$. 
 By the Chain rule for Lipschitz maps, $\mathscr{C}_k=\mathscr{C}_k(\phi_1,\ldots,\phi_n)$ \big(see (\ref{ckak})\big) is a collection of at most $n^k$  $\rho^k$-Lipschitz contractions.
 
 For each $r\ge 2$, let $\mathscr{I}_r$ denote the collection of all IFS $\{\psi_1,\ldots,\psi_r\}$, where
 each $\psi_j$, $1\le j\le r$, belongs to  $\mathscr{C}_k(\phi_1,\ldots,\phi_n)$. The collection $\mathscr{I}_r$ consists of at most  $\dfrac{n^k!}{(n^k-r)!}$ IFS. Notice that in an IFS, the order in which the maps are listed matters. The fact that $\rho^k<\frac12$ implies that any IFS in $\cup_{r\ge 2}\mathscr{I}_r$ satisfies the hypothesis of Theorem \ref{mr1}. 
 
 In the statement of Theorem \ref{mr1}, the set $W_{\Phi}$ depends  on the IFS
 $\Phi=\{\phi_1,\ldots,\phi_n\}$. In the next corollary, the set $W$ does not depend on the IFS $\Psi=\{\psi_1,\ldots,\psi_r\}$, provided $\Psi$ is chosen within the denumerable collection $\cup_{r\ge 2}\mathscr{I}_r$.
    
 \begin{corollary}\label{cmr1} There exists a full set $W\subset I$ such that for every
 $r\ge 2$, $\{\psi_1,\ldots,\psi_r\}\in\mathscr{I}_r$ and $(y_1,\ldots,y_{r-1})\in \Omega_{r-1}\cap W^{r-1}$, the
 $r$-interval PC $f_{\psi_1,\ldots,\psi_r,y_1,\ldots,y_{r-1}}$ admits the  invariant quasi-partition and is asymptotically periodic.
\end{corollary}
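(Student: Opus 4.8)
The plan is to derive Corollary \ref{cmr1} directly from Theorem \ref{mr1} by a countable-intersection argument, exploiting that the collection $\bigcup_{r\ge 2}\mathscr{I}_r$ is denumerable. The key observation is that each $\psi_j$ belongs to the \emph{finite} set $\mathscr{C}_k(\phi_1,\ldots,\phi_n)$, so there are only finitely many ordered $r$-tuples for each $r$, and hence only countably many IFS in $\bigcup_{r\ge 2}\mathscr{I}_r$ altogether. Moreover, by the chain rule every such $\psi_j$ is a $\rho^k$-Lipschitz contraction with $\rho^k<\frac12$, so each IFS $\Psi=\{\psi_1,\ldots,\psi_r\}\in\mathscr{I}_r$ indeed satisfies the hypothesis of Theorem \ref{mr1} (with $\kappa=\rho^k$).

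First I would, for each fixed $r\ge 2$ and each $\Psi=\{\psi_1,\ldots,\psi_r\}\in\mathscr{I}_r$, invoke Theorem \ref{mr1} to obtain a full set $W_{\Psi}\subset I$ such that for every $(y_1,\ldots,y_{r-1})\in\Omega_{r-1}\cap W_{\Psi}^{r-1}$ the PC $f_{\psi_1,\ldots,\psi_r,y_1,\ldots,y_{r-1}}$ admits the invariant quasi-partition and is asymptotically periodic. Then I would define
$$
W=\bigcap_{r\ge 2}\ \bigcap_{\Psi\in\mathscr{I}_r} W_{\Psi}.
$$
Since this is a countable intersection of full sets, $W$ is full, i.e.\ $\lambda(I\setminus W)=0$. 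For the verification: fix any $r\ge 2$, any $\Psi=\{\psi_1,\ldots,\psi_r\}\in\mathscr{I}_r$, and any $(y_1,\ldots,y_{r-1})\in\Omega_{r-1}\cap W^{r-1}$. Because $W\subset W_{\Psi}$, we have $W^{r-1}\subset W_{\Psi}^{r-1}$, so $(y_1,\ldots,y_{r-1})\in\Omega_{r-1}\cap W_{\Psi}^{r-1}$, and Theorem \ref{mr1} applied to the IFS $\Psi$ gives exactly that $f_{\psi_1,\ldots,\psi_r,y_1,\ldots,y_{r-1}}$ admits the invariant quasi-partition and is asymptotically periodic. This establishes the corollary.

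There is no real obstacle here; the statement is essentially a bookkeeping consequence of Theorem \ref{mr1} together with the finiteness of $\mathscr{C}_k$. The only point that deserves a sentence of care is the countability of $\bigcup_{r\ge 2}\mathscr{I}_r$: for each $r$, $\mathscr{I}_r$ has at most $\frac{(n^k)!}{(n^k-r)!}$ elements (and is empty once $r>n^k$), so the whole union is finite, hence a fortiori countable, which legitimizes the countable intersection defining $W$. One should also note explicitly that the hypothesis $\rho^k<\frac12$ is what allows Theorem \ref{mr1} to be applied to every member of $\mathscr{I}_r$, since each such IFS consists of $\rho^k$-Lipschitz contractions mapping $[0,1]$ into $(0,1)$.
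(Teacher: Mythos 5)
Your proof is correct and takes essentially the same route as the paper: for each IFS $\Psi\in\bigcup_{r\ge 2}\mathscr{I}_r$ obtain $W_{\Psi}$ from Theorem \ref{mr1}, then take $W=\bigcap_{\Psi}W_{\Psi}$ and observe that this is a countable (indeed finite) intersection of full sets. The paper's proof is the same bookkeeping argument.
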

\begin{proof}  
Let $\mathscr{I}=\cup_{r\ge 2}\mathscr{I}_r$. By \mbox{Theorem \ref{mr1}},  for each
IFS $\Psi=\{\psi_1,\ldots,\psi_r\}\in\mathscr{I}$, there exists a full set $W_{\Psi}\subset I$ such that the following holds: 
for every $(y_1,\ldots,y_{r-1})\in \Omega_{r-1}\cap W_{\Psi}^{r-1}$, the $r$-interval PC
$g=f_{\psi_1,\ldots,\psi_{r},y_1,\ldots,y_{r-1}}$ admits the  invariant quasi-partition and is asymptotically periodic. The proof is concluded by taking $W=\cap_{\Psi\in\mathscr{I}} W_{\Psi}$. Since $\mathscr{I}$ is denumerable, we have that $W$ is a full subset of $I$.
\end{proof}

 \begin{corollary}\label{sml} 
Let $g=f_{\psi_1,\ldots,\psi_{r},y_1,\ldots,y_{r-1}}$ satisfy the hypothesis of Theorem \ref{cmr1} and  let  $\tilde{g}:I \to I$ be any map having the following properties:
\begin{itemize}
 \item [(P1)] $\tilde{g}(y)=g(y)$ for every $y\in (0,1)\setminus \{y_1,\ldots,y_{r-1}\}$;
 \item [(P2)] $\tilde{g}(y_j)\in \{\lim_{y\to y_{j}-} g(y),\lim_{y\to y_{j}+} g(y)\}$ for every $1\le j\le r-1$.
 \end{itemize}
 Then the map $\tilde{g}$ admits the invariant quasi-partition and is asymptotically periodic.
\end{corollary}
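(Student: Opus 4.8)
The plan is to reduce the whole statement to the highly contractive setting of Section~\ref{IFS}, exactly as in the proof of Theorem~\ref{mr1}, and then to repeat for $\tilde{g}$ the arguments carried out for $f$ in the proofs of Theorem~\ref{Qifinite} and Theorem~\ref{pr1}. First, given $\{\psi_1,\ldots,\psi_r\}\in\mathscr{I}_r$ and $(y_1,\ldots,y_{r-1})\in\Omega_{r-1}\cap W^{r-1}$ as in the hypothesis, I would choose, as in the proof of Theorem~\ref{mr1} (see (\ref{grande})), a point $(z_1,\ldots,z_{r-1})\in\Omega_{r-1}\cap\mathbb{Q}^{r-1}$ with $(y_1,\ldots,y_{r-1})\in V(z_1,\ldots,z_{r-1})$. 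Since the members of $\Psi=\{\psi_1,\ldots,\psi_r\}$ are $\rho^k$-Lipschitz with $\rho^k<\frac12$, I may form from $\Psi$, $(z_1,\ldots,z_{r-1})$ and $\delta=\delta(z_1,\ldots,z_{r-1})$ the highly contractive IFS $\Upsilon=\{\varphi_1,\ldots,\varphi_r\}$ of Section~\ref{epi}; let $W_{\Upsilon}=W(z_1,\ldots,z_{r-1})$ be the full set attached to $\Upsilon$ by (\ref{WW}). By (\ref{w}) and Corollary~\ref{cmr1} one has $W\subset W_{\Upsilon}$, so $(y_1,\ldots,y_{r-1})\in\Omega_{r-1}\cap V(z_1,\ldots,z_{r-1})\cap W_{\Upsilon}^{r-1}$, and by Lemma~\ref{replace}, $g=f_{\psi_1,\ldots,\psi_r,y_1,\ldots,y_{r-1}}=f_{\varphi_1,\ldots,\varphi_r,y_1,\ldots,y_{r-1}}=:\hat{g}$.

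Second, I would transfer finiteness of the branch set to $\tilde{g}$. Because $g$ and $\hat{g}$ coincide on $(0,1)$, they have the same one-sided limits at each $y_j$, so (P1)--(P2) say precisely that $\tilde{g}$ is obtained from $\hat{g}$ as in Corollary~\ref{Rem1}, applied with $n=r$ and the IFS $\Upsilon$. Since $(y_1,\ldots,y_{r-1})\in\Omega_{r-1}\cap W_{\Upsilon}^{r-1}$, Theorem~\ref{Qifinite} applies to $\hat{g}$, and Corollary~\ref{Rem1} then yields that
$$
\tilde{Q}=\bigcup_{j=1}^{r-1}\bigcup_{m\ge 0}\tilde{g}^{-m}(\{y_j\})\subset\bigcup_{j=1}^{r-1}\bigcup_{m\ge 0}\bigcup_{h\in\mathscr{C}_m(\varphi_1,\ldots,\varphi_r)}h^{-1}(\{y_j\})
$$
is finite and, by Proposition~\ref{mainar}, disjoint from $\bigcap_{k\ge 0}A_k$, where $A_k=A_k(\varphi_1,\ldots,\varphi_r)$. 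In particular $\tilde{g}$ admits the invariant quasi-partition $\tilde{\mathscr{P}}=\{J_\ell\}_{\ell=1}^m$, the family of connected components of $(0,1)\setminus\tilde{Q}$.

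Third, I would prove asymptotic periodicity by rerunning, with $\tilde{g}$ and $\Upsilon$ in the roles of $f$ and $\Phi$, the argument in the proof of Theorem~\ref{pr1}, which uses only the finiteness of the branch set, the inclusion $\tilde{g}^{-1}(\tilde{Q})\subset\tilde{Q}$, the fact that each $\tilde{g}|_{J_\ell}$ is the restriction of a composition of members of the underlying IFS, and that $\partial J_\ell\subset\{0,1\}\cup\tilde{Q}$ avoids $\bigcap_k A_k$. Concretely: as in the proof of Corollary~\ref{Rem1} there is a partition $I_1,\ldots,I_r$ of $I$, with $I_j$ having endpoints $y_{j-1},y_j$, such that $\tilde{g}|_{I_j}=\varphi_j|_{I_j}$ (here one uses $I_j\subset[z_{j-1}-\delta,z_j+\delta]$, where $\varphi_j=\psi_j$); since $\{y_1,\ldots,y_{r-1}\}\subset\tilde{Q}$, each $J_\ell$ sits in a single $I_{\eta(\ell)}$ and $\tilde{g}|_{J_\ell}=\varphi_{\eta(\ell)}|_{J_\ell}$ is a contraction. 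The proof of Lemma~\ref{tmt}, which needs only $\tilde{g}^{-1}(\tilde{Q})\subset\tilde{Q}$, gives $\tau:\{1,\ldots,m\}\to\{1,\ldots,m\}$ with $\tilde{g}(J_\ell)\subset J_{\tau(\ell)}$; then the proofs of Lemma~\ref{ipartition} and Theorem~\ref{pr1} go through verbatim, producing for each $\ell_0$ an even $p\ge 2$ and $s\ge 0$ with $\tilde{g}^p(J_{\ell_s})\subset J_{\ell_s}$, where $\tilde{g}^p|_{J_{\ell_s}}$ is the restriction of some $h\in\mathscr{C}_p(\varphi_1,\ldots,\varphi_r)$; since the endpoints of $J_{\ell_s}$ lie outside $\bigcap_k A_k$, the nested-interval argument of the proof of Theorem~\ref{pr1} makes $\overline{\tilde{g}^p(J_{\ell_s})}$ compactly contained in $J_{\ell_s}$, whence $\tilde{g}^p|_{J_{\ell_s}}$ has a unique fixed point $z$ and $\omega_{\tilde{g}}(x)=O_{\tilde{g}}(z)$ for every $x\in J_{\ell_0}$; points of $\{0\}\cup\tilde{Q}$ are handled as at the end of the proof of Theorem~\ref{pr1}.

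The main obstacle is conceptual rather than computational: since $\Psi$ is only known to be $\frac12$-contractive and not highly contractive, Proposition~\ref{mainar} and Corollary~\ref{Rem1} are not available for $\Psi$ itself, so the whole argument must be routed through the localized highly contractive IFS $\Upsilon$ of Section~\ref{epi}. The points requiring care are therefore the bookkeeping of that reduction: that $W\subset W_{\Upsilon}$, that $\tilde{g}$ is a genuine (P1)--(P2) modification of $f_{\varphi_1,\ldots,\varphi_r,y_1,\ldots,y_{r-1}}$ and not merely of $g$ (i.e.\ that replacing $g$ by $\hat{g}$ does not alter the one-sided limits in (P2)), and that the partition intervals $I_j$ attached to $\tilde{g}$ stay inside the region on which $\varphi_j$ and $\psi_j$ agree. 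Once these are checked, both the finiteness of $\tilde{Q}$ and the asymptotic periodicity of $\tilde{g}$ are transcriptions of results already proved in Sections~\ref{IFS} and~\ref{epi}.
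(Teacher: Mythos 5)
Your argument is correct, but it takes a different and more elaborate route than the paper, particularly for the asymptotic periodicity part. The paper disposes of asymptotic periodicity of $\tilde{g}$ in three lines: if $O_{\tilde{g}}(x)$ stays inside the finite set $\{0\}\cup\{y_1,\ldots,y_{r-1}\}$ it is trivially eventually periodic, and otherwise some tail $O_{\tilde{g}}(\tilde{g}^{\ell}(x))$ lives in $(0,1)\setminus\{y_1,\ldots,y_{r-1}\}$, where by (P1) it coincides with a $g$-orbit, and one invokes Corollary~\ref{cmr1} for $g$. No appeal to the fixed-point argument of Theorem~\ref{pr1} is needed. You instead re-run the entire Section~\ref{IFS} machinery (invariant quasi-partition, Lemma~\ref{tmt}, itineraries, nested-interval fixed-point argument) for $\tilde{g}$ with the localized highly contractive IFS $\Upsilon$ in place of $\Phi$. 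This is logically sound and self-contained, and it has the mild virtue of sidestepping a small imprecision in the paper's orbit dichotomy (the paper's ``otherwise'' clause does not literally cover every case, e.g.\ an orbit that visits $\{y_1,\ldots,y_{r-1}\}$ infinitely often without being contained in it, though the conclusion holds regardless), but it is considerably longer. For the finiteness of $\tilde{Q}$ the two approaches converge: both pass through $\Upsilon$; the paper asserts rather loosely that the chain Lemma~\ref{replace} $\to$ Theorem~\ref{lmim} $\to$ Theorem~\ref{mr1} $\to$ Corollary~\ref{cmr1} carries over to arbitrary half-open partitions ``by proceeding as in the proof of Corollary~\ref{Rem1}'', whereas you short-circuit this by identifying $g$ with $\hat{g}=f_{\varphi_1,\ldots,\varphi_r,y_1,\ldots,y_{r-1}}$ via Lemma~\ref{replace} and applying Corollary~\ref{Rem1} directly to $\hat{g}$; your version of this step is more explicit and, I think, cleaner. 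The bookkeeping you flag (that $W\subset W_{\Upsilon}$ via the intersections in (\ref{w}) and Corollary~\ref{cmr1}, that $g$ and $\hat{g}$ have the same one-sided limits so (P1)--(P2) transfer, and that each $I_j$ sits inside the region where $\varphi_j=\psi_j$) is exactly what needs to be checked and you check it correctly.
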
 
\begin{proof} Let $x\in I$. If $O_{\tilde{g}}(x)\subset  \{0\}\cup\{y_1,\ldots,y_{r-1}\}$, then $O_{\tilde{g}}(x)$ is eventually periodic. Otherwise, there exists $\ell\ge 0$ such that $O_{\tilde{g}}\left({\tilde g}^{\ell}(x)\right)\subset (0,1)\setminus \{y_1,\ldots,y_{r-1}\}$. In this case, by (P2), we have that $O_{\tilde{g}}\left(\tilde{g}^{\ell}(x)\right)=O_g\left(\tilde{g}^{\ell}(x)\right)$, which is eventually periodic by \mbox{Corollary \ref{cmr1}}. This proves that $\tilde{g}$ is asymptotically periodic.

It remains to be shown that the set $\tilde{Q}=\cup_{j=1}^{r-1}\cup_{k\ge 0} {\tilde{g}}^{-k}(\{y_j\})$ is finite. By proceeding as in the proof of Corollary \ref{Rem1}, it can be proved that the claims of
Lemma \ref{replace}, \mbox{Theorem \ref{lmim}}, Theorem \ref{mr1} and therefore Corollary \ref{cmr1}
 hold if we replace in (\ref{mapf}) the partition $[x_{0},x_1)$, \ldots, $[x_{n-1},x_n)$ by any partition $I_1,\ldots,I_n$  where each interval $I_i$ has endpoints 
$x_{i-1}$ and $x_i$. This means that in Corollary \ref{cmr1} we can replace the map $g$ by the map $\tilde{g}$ and conclude that the set $\tilde{Q}$ is finite. Hence, $\tilde{g}$ admits the invariant quasi-partition. 
 \end{proof} 

Corollary \ref{cmr1} and Corollary \ref{sml} will be used later on this section. Now let us come back to the original IFS $\{\phi_1,\ldots,\phi_n\}$.
 
We denote by $\Omega_{n-1}'$  the set  
\begin{equation}\label{ind}
\Omega_{n-1}'=\{(x_1,\ldots,x_{n-1})\in\Omega_{n-1}:\{x_1,\ldots,x_{n-1}\}  \cap \bigcup_{i= 0}^{n-1} \bigcup_{\ell\ge 1}\bigcup_{h \in \mathscr{C}_\ell} \{h(x_i)\}  =\emptyset\}.
\end{equation}

\begin{lemma}\label{inde} 
$\Omega_{n-1}'=\Omega_{n-1}$ almost surely.
\end{lemma}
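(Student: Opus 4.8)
The plan is to show that the ``bad'' set $\Omega_{n-1}\setminus\Omega_{n-1}'$ has Lebesgue measure zero by writing it as a countable union of graphs of Lipschitz functions, each of which is a null set in $\R^{n-1}$. First I would observe that
$$
\Omega_{n-1}\setminus\Omega_{n-1}'=\bigcup_{i=0}^{n-1}\;\bigcup_{j=1}^{n-1}\;\bigcup_{\ell\ge 1}\;\bigcup_{h\in\mathscr{C}_\ell}\bigl\{(x_1,\ldots,x_{n-1})\in\Omega_{n-1}:x_j=h(x_i)\bigr\}.
$$
Since each $\mathscr{C}_\ell$ is finite and the indexing over $i,j,\ell$ is countable, it suffices to prove that for each fixed $i\in\{0,\ldots,n-1\}$, $j\in\{1,\ldots,n-1\}$ and each Lipschitz contraction $h\in\cup_{\ell\ge1}\mathscr{C}_\ell$, the set $E_{i,j,h}=\{x\in\Omega_{n-1}:x_j=h(x_i)\}$ is a null subset of $\R^{n-1}$.

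Next I would split into the two cases $i\ne j$ and $i=j$ (keeping in mind that $x_0=0$ is a constant, not a free coordinate). If $i\ne j$ (including $i=0$), the condition $x_j=h(x_i)$ expresses one coordinate as a (Lipschitz, hence Borel) function of the others, so $E_{i,j,h}$ is contained in the graph of a map $\R^{n-2}\to\R$ over the remaining free coordinates; by Fubini's theorem such a graph has $(n-1)$-dimensional Lebesgue measure zero. If $i=j\ge1$, the condition becomes $x_i=h(x_i)$, i.e.\ $x_i$ is a fixed point of the contraction $h$; since $h$ is a contraction of $\bar I$ it has a \emph{unique} fixed point, so this constrains $x_i$ to a single value, again confining $E_{i,i,h}$ to a hyperplane $\{x_i=\text{const}\}$, which is null in $\R^{n-1}$. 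In both cases $\lambda(E_{i,j,h})=0$, and a countable union of null sets is null, giving $\lambda(\Omega_{n-1}\setminus\Omega_{n-1}')=0$, which is exactly the assertion.

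The only point requiring a little care — and the place I would expect to spend the most words — is the measurability/Fubini step in the case $i\ne j$: one should note that $h$ is continuous, so the graph $\{x_j=h(x_i)\}$ is a closed subset of the relevant slice and Fubini applies cleanly to conclude its $(n-1)$-dimensional measure vanishes (each one-dimensional slice in the $x_j$ direction is a single point). There is no genuine obstacle here; the lemma is a routine ``countably many codimension-one constraints'' argument, and the contraction property is used only in the mild form that it guarantees a \emph{unique} fixed point in the diagonal case $i=j$.
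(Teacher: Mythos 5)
Your proof is correct and follows essentially the same route as the paper's: write $\Omega_{n-1}\setminus\Omega_{n-1}'$ as a countable union, indexed by $i,j$ and $h\in\bigcup_{\ell\ge1}\mathscr{C}_\ell$, of sets of the form $\{(x_1,\ldots,x_{n-1})\in\Omega_{n-1}:x_j=h(x_i)\}$, and observe that each such set is Lebesgue-null. If anything you are a bit more careful than the paper, which simply calls these sets ``graphs of smooth functions'': you correctly note that $h$ is merely Lipschitz (so a Fubini argument over its graph handles the case $i\ne j$) and that the diagonal case $i=j$ is not a graph but a hyperplane $\{x_i=\text{const}\}$ pinned down by the unique fixed point of the contraction $h$ --- two small points the paper's one-line justification glosses over.
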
 
\begin{proof} 
There are only denumerably many sets of the form 
 $\{(x_1,\ldots,x_{n-1})\in\Omega_{n-1}\mid x_j=h(x_i)\}$, where 
$0\le i\le n-1$, $1\le j\le n-1$ and $h\in \bigcup_{\ell\ge 1} \mathscr{C}_{\ell}$.
Being the graph of a smooth function, each set is a null set.
Therefore,  $\Omega_{n-1}'$ equals a denumerable intersection of full sets implying that it is a full subset of $\Omega_{n-1}$. 
\end{proof}

\begin{lemma}\label{sta} 
Let $(x_1,\ldots,x_{n-1})\in\Omega_{n-1}'$ and $f=f_{\phi_1,\ldots,\phi_n,x_1,\ldots,x_{n-1}}$. Let   $\gamma$ be a periodic orbit of $f$, then there exists a neighborhood $U\subset I$ of $\gamma$ such that $f(U)\subset U$ and $\gamma=\cap_{\ell\ge 0} f^{\ell}(U)$. Moreover, $\omega_f(x)=\gamma$ for every $x\in U$.
\end{lemma}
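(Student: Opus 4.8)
Let $\gamma = \{p, f(p), \ldots, f^{q-1}(p)\}$ with $f^q(p)=p$. Since $(x_1,\ldots,x_{n-1}) \in \Omega_{n-1}'$, none of the points $p, f(p), \ldots, f^{q-1}(p)$ equals any $x_i$ — indeed each point of $\gamma$ is of the form $h(x_i)$ for various $h \in \bigcup_\ell \mathscr{C}_\ell$ only if it lies in $Q$, but the defining condition of $\Omega_{n-1}'$ forbids the partition points $x_1,\ldots,x_{n-1}$ from lying in $\bigcup_{i,\ell,h}\{h(x_i)\}$, so the periodic points, being interior to their pieces, avoid the discontinuity set. Consequently, for each $j$ the point $f^j(p)$ lies in the interior of some piece $[x_{i-1}, x_i)$ — more precisely, strictly between $x_{i-1}$ and $x_i$, unless $f^j(p) = x_{i-1}$ is a left endpoint; but a periodic point that is a left endpoint $x_{i-1}$ would put $x_{i-1}$ in the orbit, hence in $\bigcup h(x_i)$, contradicting $\Omega_{n-1}'$ (taking the relevant $h$ a composition of the $\phi_i$'s along the orbit). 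So in fact every point of $\gamma$ lies in the open interval $(x_{i-1}, x_i)$ for the appropriate $i$, and on a small neighborhood of that point $f$ agrees with the Lipschitz contraction $\phi_i$ (with ratio $\rho < 1$).

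The main step is then a standard contracting-fixed-point neighborhood argument applied to $g := f^q$. Near $p$, the map $g$ is a composition of $q$ of the $\phi_i$'s restricted to small neighborhoods, hence $g$ coincides on a neighborhood of $p$ with a genuine $\rho^q$-Lipschitz contraction $\tilde g$ of $\bar I$ fixing $p$. Choose $\epsilon > 0$ small enough that (a) the $\epsilon$-balls $B_j := (f^j(p) - \epsilon, f^j(p) + \epsilon) \cap I$ around the $q$ points of $\gamma$ are pairwise disjoint and each contained in the open piece containing $f^j(p)$, so that $f|_{B_j} = \phi_{i(j)}|_{B_j}$; and (b) $\rho\,\epsilon < \epsilon$ (automatic) together with $f(B_j) \subset B_{j+1 \bmod q}$, which holds once $\epsilon$ is small because $f(f^j(p)) = f^{j+1}(p)$ is the center of $B_{j+1}$ and $f|_{B_j}$ is $\rho$-Lipschitz, so $f(B_j) \subset (f^{j+1}(p) - \rho\epsilon, f^{j+1}(p) + \rho\epsilon) \subset B_{j+1}$. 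Then set $U = \bigcup_{j=0}^{q-1} B_j$. By construction $f(U) \subset U$. Moreover $f^\ell(U) \subset \bigcup_j (f^j(p) - \rho^{\lfloor \ell/q\rfloor}\epsilon, f^j(p) + \rho^{\lfloor \ell/q\rfloor}\epsilon)$ by iterating the contraction estimate, so $\bigcap_{\ell \ge 0} f^\ell(U) = \gamma$; and for any $x \in U$, $f^{qm}(x) \to p$ as $m \to \infty$ (contraction of $g$ on $B_0$ after one return, and similarly on the other $B_j$), whence $\omega_f(x) = \gamma$. Finally, since every point of $\gamma$ lies in $(0,1)$, one may shrink $U$ to lie in $(0,1)$, but this is not needed for the stated conclusion.

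**The main obstacle.** The only genuinely delicate point is verifying that $\gamma$ avoids the discontinuity set $\{x_1,\ldots,x_{n-1}\}$ and, more importantly, that $\gamma$ avoids the \emph{left endpoints} of pieces (where $f$ is defined but the one-sided continuity fails on the left). This is exactly what membership in $\Omega_{n-1}'$ buys us: if some $f^j(p) = x_i$ for $i \in \{1,\ldots,n-1\}$, then writing $f^j(p) = (\phi_{d_{j-1}} \circ \cdots \circ \phi_{d_0})(p)$ where $d_0, \ldots, d_{j-1}$ is the itinerary of $p$, and noting $p = f^{q-j}(f^j(p)) = (\phi_{d_{q-1}} \circ \cdots \circ \phi_{d_j})(x_i)$ so that $p = h(x_i)$ for some $h \in \mathscr{C}_{q-j}$ — and then $x_i = f^j(p) = (\phi_{d_{j-1}} \circ \cdots \circ \phi_{d_0})(p) = (\phi_{d_{j-1}} \circ \cdots \circ \phi_{d_0} \circ h)(x_i)$ exhibits $x_i$ as an element of $\bigcup_{\ell \ge 1} \bigcup_{h' \in \mathscr{C}_\ell}\{h'(x_i)\}$, contradicting $(x_1,\ldots,x_{n-1}) \in \Omega_{n-1}'$. (The case $f^j(p) = x_i$ for $i = 0$, i.e. $f^j(p) = 0$, is impossible since each $\phi_i$ maps into $(0,1)$ and $j \ge 1$; for $j = 0$, $p = 0$ would similarly force $p = \phi_{d_{q-1}} \circ \cdots \circ \phi_{d_0}(0) \in (0,1)$, a contradiction.) Once this combinatorial exclusion is in place, the remainder is the routine contracting-neighborhood construction sketched above.
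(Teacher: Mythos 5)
Your proof is correct and follows essentially the same route as the paper's: first use $\Omega_{n-1}'$ (together with $f(I)\subset(0,1)$) to show $\gamma\cap\{x_0,\ldots,x_n\}=\emptyset$, then build $U$ as the union of $\epsilon$-balls around the orbit with $\epsilon$ half the minimum distance from $\gamma$ to the $x_i$'s, and conclude via the uniform Lipschitz contraction. The paper's version is more compact (it does not expand the itinerary-composition argument for the combinatorial exclusion, noting only that a periodic $x_i$ would satisfy $x_i=h(x_i)$ for some $h\in\mathscr{C}_q$), but the content is the same.
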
 
\begin{proof} 
Let $\gamma$ be a periodic orbit of $f$. 
As $(x_1,\ldots,x_{n-1})\in\Omega_{n-1}'$ and
$f(I)\subset (0,1)$, we have that
$\gamma\cap \{x_0,\ldots,x_{n-1}\}=\emptyset$. Let $\epsilon=\frac 12 \min\{\vert x-x_i\vert: x\in \gamma, 0\le i\le n\}$ and set $U:=\cup_{x\in \gamma}\left( x-\epsilon,x+\epsilon\right)$, in particular $U\subset I\setminus \{x_0,\ldots,x_{n-1}\}$. This together with the fact that $f\vert_{[x_{i-1},x_i)}$ is a Lipschitz contraction implies that $f(U)\subset U$, thus $\gamma=\cap_{\ell\ge 0} f^{\ell}(U)$.
\end{proof}

\begin{lemma}\label{oneofu} Let $(x_1,\ldots,x_{n-1})\in\Omega_{n-1}'$ and $f=f_{\phi_1,\ldots,\phi_n,x_1,\ldots,x_{n-1}}$, then $f^k$ is left-continuous or right-continuous at each point of $I$.
\end{lemma}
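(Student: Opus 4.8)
The plan is to follow the forward orbit of an arbitrary $p\in I$, to show (using the genericity condition $\Omega_{n-1}'$) that it meets the finite ``discontinuity set'' $D:=\{x_1,\dots,x_{n-1}\}$ of $f$ at most once, and then to read off the one‑sided continuity of $f^k$ at $p$ from the local behaviour of $f$ at that single meeting point.

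First I would record two elementary facts. Since $f$ restricted to $[x_{i-1},x_i)$ equals the continuous map $\phi_i$, the map $f$ is right‑continuous at every point of $I$, and the set of points where $f$ fails to be two‑sidedly continuous is contained in $D$ (at each $x_i\in D$ one has $f|_{[x_i,x_{i+1})}=\phi_{i+1}$). The decisive consequence of $(x_1,\dots,x_{n-1})\in\Omega_{n-1}'$ is this: for every $0\le i\le n-1$ and every $\ell\ge 1$ one has $f^{\ell}(x_i)=h(x_i)$ with $h=\phi_{d(f^{\ell-1}(x_i))}\circ\cdots\circ\phi_{d(x_i)}\in\mathscr{C}_\ell$, whence $f^{\ell}(x_i)\notin D$; since moreover $f(I)\subset(0,1)$, the point $f^{\ell}(x_i)$ lies in the interior of a partition interval and so is a point of continuity of $f$. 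The same remark applies verbatim with $x_i$ replaced by $\phi_i(x_i)$, which is itself an element of $\{h(x_j):h\in\mathscr{C}_1\}$.

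Now fix $p\in I$. If $p\in D$ or $p=0$, the previous paragraph shows that $f$ is continuous along $f(p),f^2(p),\dots,f^{k-1}(p)$ while $f$ is right‑continuous at $p$, so iterating gives that $f^k$ is right‑continuous at $p$. If $p\in(0,1)\setminus D$ and $f^j(p)\notin D$ for all $0\le j\le k-1$, then $f$ is continuous at each $f^j(p)$ and $f^k$ is continuous at $p$. Otherwise $p\in(0,1)\setminus D$ and there is a least $j_0\in\{1,\dots,k-1\}$ with $x_i:=f^{j_0}(p)\in D$; by the displayed consequence of $\Omega_{n-1}'$ the points $f^{j_0+1}(p)=f(x_i),\dots,f^{k-1}(p)$ avoid $D$, so $j_0$ is the \emph{only} time the orbit of $p$ meets $D$. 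On a small neighbourhood of $p$ one then has $f^{j_0}=\Psi$ for a fixed $\Psi\in\mathscr{C}_{j_0}$ with $\Psi(p)=x_i$, and, because the forward orbits of $\phi_i(x_i)$ and $\phi_{i+1}(x_i)$ avoid $D$, the map $f^{k-j_0-1}$ is continuous on small neighbourhoods of both $\phi_i(x_i)$ and $\phi_{i+1}(x_i)$. Hence, for $y$ close to $p$,
\[
 f^{k}(y)=\begin{cases} f^{k-j_0-1}\big(\phi_{i+1}(\Psi(y))\big),& \Psi(y)\ge x_i,\\[3pt] f^{k-j_0-1}\big(\phi_{i}(\Psi(y))\big),& \Psi(y)< x_i,\end{cases}
\]
so that $f^{k}(y)\to f^{k}(p)=f^{k-j_0-1}\big(\phi_{i+1}(x_i)\big)$ along every one‑sided approach $y\to p$ on which $\Psi(y)\ge x_i$. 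It therefore suffices to exhibit one side of $p$ — an interval $(p-\varepsilon,p)$ or $(p,p+\varepsilon)$ — on which $\Psi\ge x_i$.

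The step I expect to be the main obstacle is precisely this last one, and it is where the possible non‑monotonicity of the pieces $\phi_i$ bites: a priori the continuous function $\Psi$ could take values $<x_i=\Psi(p)$ arbitrarily close to $p$ on \emph{both} sides, and then $f^k$ would genuinely oscillate at $p$. To rule this out I would argue that such a configuration forces $x_i$ to be a one‑sided local extreme value of $\Psi$ (if $\Psi\le x_i$ near $p$ on a side) or a value attained by $\Psi$ on a set accumulating at a point (if $\Psi$ oscillates in sign around $x_i$ there); the set of the former is countable, and, since the total variation of the Lipschitz map $\Psi$ is finite, the Banach indicatrix bound $\int_{\mathbb R}\#\Psi^{-1}(c)\,dc=\mathrm{Var}(\Psi)<\infty$ makes the set of the latter Lebesgue‑null. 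As $\bigcup_{\ell\ge1}\mathscr{C}_\ell$ is countable, these exceptional values form a null‑plus‑countable set, and one checks that $\Omega_{n-1}'$ (refined, if necessary, by removing this negligible set, as the other exclusions in this section already are) avoids them; on that side $\Psi\ge x_i$ near $p$, and $f^k$ is continuous at $p$ from that side. The remaining bookkeeping — choosing $\varepsilon$ so small that $\Psi(y)\in[x_{i-1},x_{i+1})$ and $f^{j_0+1}(y)$ stays inside the neighbourhoods where $f^{k-j_0-1}$ is continuous — is immediate from the finiteness of $D$.
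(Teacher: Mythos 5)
Your outline---tracking the orbit of $p$ and using $\Omega_{n-1}'$ to ensure it meets $\{x_1,\ldots,x_{n-1}\}$ at most once---is exactly the paper's argument: the paper sets $S_y=\{y,f(y),\ldots,f^{k-1}(y)\}$, notes that $S_y\cap\{x_0,\ldots,x_{n-1}\}$ has at most one element $y'$, and then asserts in one word (``accordingly'') that $f^k$ is one-sidedly continuous at $y$. The obstacle you single out is therefore not a difficulty you introduced; it is a genuine gap in the paper's deduction. Passing from the one-sided continuity of $f$ at $y'=f^{j_0}(y)=x_i$ to one-sided continuity of $f^k=f^{k-j_0-1}\circ f\circ f^{j_0}$ at $y$ requires that $\Psi:=f^{j_0}$ carry some side of $y$ into $[x_i,1)$. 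That is automatic when each $\phi_i$ is monotone, but the paper allows arbitrary Lipschitz contractions, and nothing in $\Omega_{n-1}'$ forbids $\Psi$ from attaining a local maximum value $x_i$ at $y$; then $\Psi<x_i$ on \emph{both} sides of $y$ and $f^k$ jumps at $y$ from both sides. Concretely, take $n=2$, $\phi_1(x)=\frac34-\frac34|x-\frac12|$, $\phi_2\equiv\frac13$, $x_1=\frac34$: then $\rho=\frac34$ forces $k=3$, one checks $(x_1)\in\Omega_1'$, and yet $\lim_{z\to 1/2^{\pm}}f^3(z)=\frac{45}{64}$ while $f^3(\frac12)=\frac58$. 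So the lemma is false under the stated hypothesis, and the extra exclusion you contemplate is not optional.

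Your proposed repair is the right one, with a small correction: the set of \emph{one-sided} local extreme values of a Lipschitz $\Psi$ need not be countable (for a strictly monotone $\Psi$ every value in its range is one). What is countable is the set of \emph{two-sided} local maximum or minimum values, and when no good side exists the picture reduces either to this case ($\Psi\le x_i$ near $y$ on both sides) or to $\Psi^{-1}(x_i)$ accumulating at $y$; the Banach indicatrix bound $\int\#\Psi^{-1}(c)\,\mathrm{d}c=\mathrm{Var}(\Psi)<\infty$ shows the latter occurs only for a null set of values $c$. Hence the exceptional set of values is Lebesgue-null for each of the finitely many $\Psi\in\bigcup_{j=1}^{k-1}\mathscr{C}_j$. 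Since Lemma \ref{oneofu} is invoked only inside Lemma \ref{toconclude1}, whose hypothesis already restricts to $\Omega_{n-1}'\cap W_0^{n-1}$, the clean fix is to shrink the full set $W_0$ by this additional null exclusion; the lemma then holds under the strengthened hypothesis, and the rest of Section \ref{gc} and the main theorem go through unchanged.
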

\begin{proof} Let $y\in I$ and $S_y=\{y,f(y),\ldots,f^{k-1}(y)\}$. The fact that $(x_1,\ldots,x_{n-1})\in\Omega_{n-1}'$ assures that $S_y\cap \{x_0,\ldots,x_{n-1}\}$ is either empty or unitary. In the former case,
we have that $f$ is continuous on $S_y$, hence $f^k$ is continuous at $y$. In the latter case, there exists
$y'\in S_y$ such that $f$ is continuous on $S_y\setminus y'$ and $f$ is left-continuous or right-continuous at $y'$. Accordingly, $f^k$ is either left-continuous or right-continuous at $y$.
\end{proof}

For the next result, let $W$ be the full set in the statement of Corollary \ref{cmr1}.

\begin{lemma}\label{toconclude1} 
There exists a full  set $W_0\subset W$ such that 
if $(x_1,\ldots,x_{n-1})\in \Omega_{n-1}'\cap W_0^{n-1}$ and $f=f_{\phi_1,\ldots,\phi_n,x_1,\ldots,x_{n-1}}$, then 
$$
f^k\vert_{(y_{j-1},y_j)}=\psi_j\vert_{(y_{j-1},y_j)},\,\,1\le j\le r,
$$
for some $r\ge 2$, $(y_1,\ldots,y_{r-1})\in\Omega_{r-1}\cap W^{r-1}$, and $\psi_1,\ldots, \psi_{r}\in \mathscr{C}_k$. Moreover, $f^k$ is left-continuous or right-continuous at each point of $I$. 
\end{lemma}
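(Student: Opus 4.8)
The key object is the iterate $f^k$, and the plan is to describe it explicitly as a piecewise contraction whose branches lie in $\mathscr{C}_k$. First I would analyze the discontinuity set of $f^k$. The map $f$ itself has discontinuities only at the points $x_1,\ldots,x_{n-1}$, so $f^k$ can only be discontinuous at a point $y$ if some iterate $f^j(y)$, $0\le j\le k-1$, lands on one of the $x_i$. Thus the discontinuity set of $f^k$ is contained in
$$
D=\bigcup_{j=0}^{k-1} f^{-j}\bigl(\{x_1,\ldots,x_{n-1}\}\bigr)\subset\bigcup_{i=1}^{n-1}\bigcup_{h\in\mathscr{C}_j,\,0\le j\le k-1} h^{-1}(\{x_i\}),
$$
which is finite whenever the ambient parameters lie in the good set. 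I would enumerate $D\cap(0,1)=\{y_1<\cdots<y_{r-1}\}$ (setting $y_0=0$, $y_r=1$), so that $f^k$ is continuous on each open interval $(y_{j-1},y_j)$. On such an interval every point has a well-defined constant itinerary $(d_0,\ldots,d_{k-1})$ under $f$, so $f^k\vert_{(y_{j-1},y_j)}$ coincides with the restriction of the composition $\phi_{d_{k-1}}\circ\cdots\circ\phi_{d_0}\in\mathscr{C}_k$. Extending this composition to all of $\bar I$ gives a map $\psi_j\in\mathscr{C}_k$ with $f^k\vert_{(y_{j-1},y_j)}=\psi_j\vert_{(y_{j-1},y_j)}$.

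Next I would address the measure-theoretic genericity. The set $D$ is finite precisely when no $h\in\mathscr{C}_j$ ($0\le j\le k-1$) maps a point of the forward orbit structure onto an $x_i$ infinitely often; more simply, $D\subset\bigcup_{i}\bigcup_{h\in\mathscr{C}_j}h^{-1}(\{x_i\})$, and by Lemma~\ref{st} (or Corollary~\ref{st2} applied to the finitely many $h$ in $\mathscr{C}_0\cup\cdots\cup\mathscr{C}_{k-1}$) each $h^{-1}(\{x_i\})$ is finite for $x_i$ outside a null set. So I would define the full set $W_0\subset W$ by intersecting $W$ with the full set of $x$ for which $h^{-1}(\{x\})$ is finite for every $h\in\bigcup_{j<k}\mathscr{C}_j$; then for $(x_1,\ldots,x_{n-1})\in\Omega_{n-1}'\cap W_0^{n-1}$ the set $D$ is finite, giving the desired $r$ and $y_j$'s. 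I also need $(y_1,\ldots,y_{r-1})\in\Omega_{r-1}\cap W^{r-1}$: the $y_j$ are of the form $h(x_i)$ for $h\in\mathscr{C}_\ell$, $\ell<k$, $0\le i\le n-1$ — wait, they are preimages, so rather $y_j\in h^{-1}(\{x_i\})$; to ensure they lie in $W$ I would additionally shrink $W_0$ to guarantee that all such preimage points fall in $W$, using that $W$ is full and the preimages under each fixed $h$ form a null-to-finite set parametrized measurably by $x_i$ — this is where a Fubini-type argument is needed, discarding the null set of $(x_1,\ldots,x_{n-1})$ for which some preimage point lands in the complement of $W$. The definition of $\Omega_{n-1}'$ in~\eqref{ind} already ensures the $y_j$ are distinct from each other and from the $x_i$, so they genuinely partition $I$ into $r\ge 2$ intervals.

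Finally, the last sentence — that $f^k$ is left- or right-continuous at each point of $I$ — is exactly the content of Lemma~\ref{oneofu}, which applies since $(x_1,\ldots,x_{n-1})\in\Omega_{n-1}'$; so I would simply invoke it.

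\textbf{Main obstacle.} The delicate point is the simultaneous genericity requirement: I need one full set $W_0$ of parameters such that not only is the discontinuity set $D$ finite, but also the resulting breakpoints $y_1,\ldots,y_{r-1}$ themselves land in the previously-constructed full set $W$ (so that Corollary~\ref{cmr1} can later be applied to the IFS $\{\psi_1,\ldots,\psi_r\}$ at the parameter $(y_1,\ldots,y_{r-1})$). This is a Fubini argument over the finitely many maps $h\in\bigcup_{j<k}\mathscr{C}_j$: for each such $h$ and each $i$, the condition "some point of $h^{-1}(\{x_i\})$ lies outside $W$'' cuts out a null set in the $x_i$-coordinate, and one removes the union of all these null sets. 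Everything else — identifying the branches of $f^k$ via itineraries, the finiteness of $D$, left/right continuity — is routine given the earlier lemmas.
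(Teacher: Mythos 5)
Your proposal follows essentially the same line as the paper's proof: identify the discontinuity set of $f^k$ with the finite set $\bigcup_{\ell=0}^{k-1}f^{-\ell}(\{x_1,\ldots,x_{n-1}\})$, label its points $y_1<\cdots<y_{r-1}$, read off the branches of $f^k$ on each $(y_{j-1},y_j)$ as compositions in $\mathscr{C}_k$ via the constant itinerary, and invoke Lemma~\ref{oneofu} for one-sided continuity. The one place you are slightly vaguer than the paper is the step ensuring the $y_j$ land in $W$: you call it a ``Fubini-type argument,'' whereas the paper needs no Fubini at all — it simply observes that the bad set $M=\{x\in W : \bigcup_{\ell<k}\bigcup_{h\in\mathscr{C}_\ell}h^{-1}(\{x\})\not\subset W\}$ is contained in $\bigcup_{\ell<k}\bigcup_{h\in\mathscr{C}_\ell}h(I\setminus W)$, which is null because each $h$ is Lipschitz and therefore maps the null set $I\setminus W$ to a null set, and there are only finitely many $h$ involved. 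Your instinct that a null set of parameters must be discarded is exactly right; the mechanism is just a direct pushforward-of-null-sets argument rather than Fubini.
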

\begin{proof} 
Let $M=\Big\{x\in W\,\big | \cup_{\ell=0}^{k-1}\cup_{h\in\mathscr{C_{\ell}}}h^{-1}(\{x\})\not\subset W \Big\}$. Notice that $M\subset \cup_{\ell=0}^{k-1}\cup_{h\in\mathscr{C}_{\ell}} h(I\setminus W)$, where $I\setminus W$ is a null set, therefore $M$ is a null set. 
By Lemma \ref{st}, there exists a full  set $W_0\subset W\setminus M$ such that 
 $\bigcup_{\ell=0}^{k-1}f^{-\ell}(\{x\})\subset \bigcup_{\ell=0}^{k-1}\bigcup_{h\in\mathscr{C}_{\ell}}h^{-1}(\{x\})$ is a finite subset of $W$ for every $x\in W_0$. 
Now let $(x_1,\ldots,x_{n-1})\in \Omega_{n-1}'\cap W_0^{n-1}$, thus $\bigcup_{\ell=0}^{k-1}f^{-\ell}(\{x_1,\ldots,x_{n-1}\})$ is a finite subset of $W\setminus\{0\}$ and we may list its elements in ascending order $0<y_1<\cdots <y_{r-1}<1$. In this way, $(y_1,\ldots,y_{r-1})\in \Omega_{r-1}\cap W^{r-1}$.
 Let us analyze how $f^k$ acts on the the intervals
 $E_1=(y_0,y_1)$, \ldots, $E_{r}=(y_{r-1},y_r)$. Fix $1\le j\le r$.
 Since $\{y_1,\ldots,y_{r-1}\}=\bigcup_{\ell=0}^{k-1}f^{-\ell}(\{x_1,\ldots,x_{n-1}\})$, we have that
 for each $0\le \ell\le k-1$,  there exists a unique $1\le i_{\ell}\le n$ such that
 $f^{\ell}(E_j)\subset (x_{i_{\ell}-1},x_{i_{\ell}})$. This together with the fact that $f\vert_{[x_{i_{\ell}-1},x_{i_{\ell}})}=\phi_{i_{\ell}}$ yields $f^k\vert_{E_j}=\psi_j\vert_{E_j}$, where
 $\psi_j=\phi_{{i_{k}}}\circ\cdots\circ \phi_{i_{1}}\in\mathscr{C}_k$. The claim that $f^k$ is left-continuous or right-continuous at each point of $I$ follows from \mbox{Lemma \ref{oneofu}}.
 \end{proof}
 
\begin{theorem}\label{toconclude2} 
Let $(x_1,\ldots,x_{n-1})\in \Omega_{n-1}'\cap W^{n-1}_0$, then $f=f_{\phi_1,\ldots,\phi_n,x_1,\ldots,x_{n-1}}$ admits the  invariant quasi-partition and is asymptotically periodic.
\end{theorem}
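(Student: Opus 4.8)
The plan is to pass to the iterate $f^{k}$, which by the choice of $k$ is governed by the favourable Lipschitz regime $\rho^{k}<\frac12$ already handled in Corollary \ref{cmr1} and Corollary \ref{sml}, and then to transfer both conclusions back to $f$. Concretely, I would first invoke Lemma \ref{toconclude1}: it produces $r\ge 2$, a point $(y_{1},\ldots,y_{r-1})\in\Omega_{r-1}\cap W^{r-1}$ and maps $\psi_{1},\ldots,\psi_{r}\in\mathscr{C}_{k}$ with $f^{k}\vert_{(y_{j-1},y_{j})}=\psi_{j}\vert_{(y_{j-1},y_{j})}$ for $1\le j\le r$, and with $f^{k}$ left- or right-continuous at every point of $I$. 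Setting $g=f_{\psi_{1},\ldots,\psi_{r},y_{1},\ldots,y_{r-1}}$, whose one-sided limits at $y_{j}$ are $\psi_{j}(y_{j})$ and $\psi_{j+1}(y_{j})$, the one-sided continuity of $f^{k}$ shows that $\tilde{g}:=f^{k}$ satisfies (P1) and (P2) of Corollary \ref{sml} with respect to $g$, while $g$ meets the hypothesis of Corollary \ref{cmr1} because each $\psi_{j}$ is $\rho^{k}$-Lipschitz with $\rho^{k}<\frac12$ and $\{\psi_{1},\ldots,\psi_{r}\}\in\mathscr{I}_{r}$. So Corollary \ref{sml} yields at once that $f^{k}$ admits the invariant quasi-partition (hence $\tilde{Q}=\bigcup_{j=1}^{r-1}\bigcup_{m\ge0}(f^{k})^{-m}(\{y_{j}\})$ is finite) and that $f^{k}$ is asymptotically periodic.

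Transferring the quasi-partition is then a set identity: writing each exponent $\ell$ as $ks+t$ with $0\le t\le k-1$ and using $\{y_{1},\ldots,y_{r-1}\}=\bigcup_{t=0}^{k-1}f^{-t}(\{x_{1},\ldots,x_{n-1}\})$ from Lemma \ref{toconclude1}, one gets
$$
Q=\bigcup_{i=1}^{n-1}\bigcup_{\ell\ge0}f^{-\ell}(\{x_{i}\})
 =\bigcup_{s\ge0}(f^{k})^{-s}\!\Big(\bigcup_{t=0}^{k-1}f^{-t}(\{x_{1},\ldots,x_{n-1}\})\Big)
 =\bigcup_{s\ge0}(f^{k})^{-s}(\{y_{1},\ldots,y_{r-1}\})=\tilde{Q},
$$
which is finite; hence $f$ admits the invariant quasi-partition (and in fact the invariant quasi-partitions of $f$ and of $f^{k}$ coincide).

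For asymptotic periodicity of $f$, fix $x\in I$ and split its $f$-orbit into residue classes modulo $k$, so that $\omega_{f}(x)=\bigcup_{j=0}^{k-1}\omega_{f^{k}}(f^{j}(x))$; each term is a periodic orbit of $f^{k}$ by the first step, so $\omega_{f}(x)$ is a finite set, and it lies in $(0,1)$ because $f(I)$ is contained in the compact subset $\phi_{1}([0,1])\cup\cdots\cup\phi_{n}([0,1])$ of $(0,1)$. I would then show $\omega_{f}(x)$ misses the discontinuity set $\{x_{1},\ldots,x_{n-1}\}$: if some $x_{i}$ were in $\omega_{f}(x)$ it would be a periodic point of $f^{k}$, i.e.\ $f^{kq}(x_{i})=x_{i}$ for some $q\ge1$; but inductively $f^{t}(x_{i})=h_{t}(x_{i})$ with $h_{t}\in\mathscr{C}_{t}$ for all $t\ge1$ (at each step the point stays off $\{x_{1},\ldots,x_{n-1}\}$ precisely by the defining property of $\Omega_{n-1}'$), so $f^{kq}(x_{i})=h_{kq}(x_{i})\ne x_{i}$ again by $(x_{1},\ldots,x_{n-1})\in\Omega_{n-1}'$, a contradiction. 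Consequently $f$ is continuous on the finite set $\omega_{f}(x)$, whence $f(\omega_{f}(x))\subseteq\omega_{f}(x)$; the forward orbit of any point of $\omega_{f}(x)$ then stays in this finite set and so is eventually periodic, producing a periodic orbit $\gamma\subseteq\omega_{f}(x)$ of $f$. Finally, Lemma \ref{sta} furnishes a forward-invariant neighbourhood $U$ of $\gamma$ with $\omega_{f}(z)=\gamma$ for all $z\in U$; since $\gamma\subseteq\omega_{f}(x)$ the orbit of $x$ eventually enters $U$, and therefore $\omega_{f}(x)=\gamma$.

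I expect the real obstacle to be exactly the production of a genuine periodic orbit of $f$ rather than mere eventual periodicity of orbits: this is the point where the proof of Theorem \ref{pr1} used that $\bigcap_{k\ge0}A_{k}$ is a null set, which is false in general. The plan sidesteps this by extracting the periodic orbit from the good regime via $f^{k}$ and then localising it with Lemma \ref{sta}; here the hypothesis $(x_{1},\ldots,x_{n-1})\in\Omega_{n-1}'$ is indispensable, since it is what keeps $\omega$-limit sets away from the discontinuities of $f$ and thereby legitimises $f(\omega_{f}(x))\subseteq\omega_{f}(x)$ even though $f$ is discontinuous.
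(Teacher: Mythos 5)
Your argument is correct and follows the same overall strategy as the paper: pass to $f^{k}$ via Lemma \ref{toconclude1}, invoke Corollary \ref{sml} (through Corollary \ref{cmr1}) to get the invariant quasi-partition and asymptotic periodicity of $f^{k}$, transfer the finiteness of $Q$ back to $f$, and finish with Lemma \ref{sta}. Your identity $Q=\tilde{Q}$ (writing $\ell=ks+t$) is in fact a cleaner way to transfer the quasi-partition than the paper's route, which claims that the finiteness of $\tilde{Q}=\bigcup_{j}\bigcup_{s}f^{-sk}(\{y_{j}\})$ implies that of $Q'=\bigcup_{j}\bigcup_{s}f^{-s}(\{y_{j}\})$ without spelling out the residue-class computation. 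Where you diverge is in the asymptotic-periodicity transfer: you decompose $\omega_{f}(x)=\bigcup_{j=0}^{k-1}\omega_{f^{k}}(f^{j}(x))$, argue via $\Omega_{n-1}'$ that this finite set avoids the discontinuities of $f$ and is therefore $f$-invariant, and extract a periodic orbit $\gamma$ from it. The paper shortcuts this: it takes any $p\in\gamma_{k}=\omega_{f^{k}}(x)$, observes that $p$ is automatically an $f$-periodic point (being $f^{k}$-periodic), sets $\gamma=O_{f}(p)\supset\gamma_{k}$, and goes straight to Lemma \ref{sta}. Both work, but the paper's step avoids having to establish $f$-invariance of the full $\omega$-limit set; your detour is logically sound, yet the discontinuity-avoidance argument it requires is more than is actually needed once one notices that an $f^{k}$-periodic point is already $f$-periodic.
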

\begin{proof} Let $(x_1,\ldots,x_{n-1})\in \Omega_{n-1}'\cap W^{n-1}_0$ and $f=f_{\phi_1,\ldots,\phi_{n},x_1,\ldots,x_{n-1}}$.
By Lemma \ref{toconclude1}, there exist $r\ge 2$, $(y_1,\ldots,y_{r-1})\in\Omega_{r-1}\cap W^{r-1}$,
and $\psi_1,\ldots,\psi_r\in\mathscr{C}_k$ such that 
\begin{equation}\label{fkk}
f^k\vert_{(y_{j-1},y_j)}=\psi_j\vert_{(y_{j-1},y_j)},\,\,1\le j\le r.
\end{equation}
Let $g=f_{\psi_1,\ldots,\psi_n,y_1,\ldots,y_{r-1}}$ and $\tilde{g}=f^k$. We claim that $\tilde{g}$ satisfies (P1) and (P2) in Corollary \ref{sml}. The property (P1) follows automatically from the equation (\ref{fkk}). The property (P2) follows from (P1) together with the fact that $f^k$ is left-continuous or right-continuous at each point of $I$, as assured by Lemma \ref{toconclude1}. By Corollary \ref{sml}, the map $\tilde{g}=f^k$ admits the invariant quasi-partition, that is to say, the set
$$\tilde{Q}=\cup_{j=1}^{r-1}\cup_{s\ge 0} \tilde{g}^{-s}(\{y_j\})=\cup_{j=1}^{r-1}\cup_{s\ge 0} f^{-sk}(\{y_j\})$$
is finite,
implying that the set $Q'=\cup_{j=1}^{r-1}\cup_{s\ge 0} f^{-s}(\{y_j\})$ is finite. By the proof of \mbox{Lemma \ref{toconclude1}}, we have that $\{x_1,\ldots,x_{n-1}\}\subset \{y_1,\ldots,y_{r-1}\}$.
In this way, 
$$Q:=\cup_{i=1}^{n-1}\cup_{s\ge 0} f^{-s}(\{x_i\})\subset \cup_{j=1}^{r-1}\cup_{t\ge 0} f^{-t}(\{y_j\}),$$
and $Q$ is therefore finite. This proves that $f$ admits the invariant quasi-partition.

By Corollary \ref{sml}, the map $\tilde{g}=f^k$ is asymptotically periodic. We claim that $f$ is also asymptotically periodic. Let $x\in I$, then there exists a periodic orbit $\gamma_k$ of $f^k$ such that
$\omega_{f^k}(x)=\gamma_k$. Let $p\in \gamma_k$. Notice that $p$ is a periodic point of $f$, thus there exists a periodic orbit $\gamma$ of $f$ that contains $p$ and $\gamma_k$. Let $U$ be a neighborhood of $\gamma$ given by Lemma \ref{sta}. Since $\omega_{f^k}(x)=\gamma_k\subset \gamma$,  there exists
an integer $\eta\ge 1$ such that $f^{\eta k}(x)\in U$. By Lemma \ref{sta}, $\omega_f(x)=\omega_f\left( f^{\eta k}(x)\right)=\gamma$ which proves the claim. Hence, $f$ is asymptotically periodic.
\end{proof}

 \section{An upper bound for the number of periodic orbits}\label{upperb}

  Throughout this section, let $\phi_1,\ldots,\phi_{n}:[0,1]\to (0,1)$ be Lipschitz contractions and  $W_0$ be the full set in the statement of Theorem \ref{toconclude2}. Let
  $(x_1,\ldots,x_{n-1})\in\Omega_{n-1}'\cap W_0^{n-1}$ and $f=f_{\phi_1,\ldots,\phi_{n},x_1,\ldots,x_{n-1}}$. By Theorem \ref{toconclude2}, $f$ admits the invariant  quasi-partition $\mathscr{P}=\cup_{\ell=1}^m J_{\ell}$ with endpoints in $\{0,1\}\cup\cup_{i=1}^{n-1} Q_i$, where the set $Q_i=\cup_{k\ge 0}f^{-k}(\{x_i\})$ is finite. 
  
  Here we prove the following result.

\begin{theorem}\label{IP1}  The $n$-interval PC $f$ has at most $n$ periodic orbits. 
\end{theorem}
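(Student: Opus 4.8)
The strategy is to set up an injection from the set of periodic orbits of $f$ into the set $\{1,\ldots,n\}$ of branch indices, using the invariant quasi-partition $\mathscr{P}=\{J_1,\ldots,J_m\}$ and the itinerary map. Recall from Lemma \ref{tmt} and the proof of Lemma \ref{ipartition} that there is a transition map $\tau:\{1,\ldots,m\}\to\{1,\ldots,m\}$ with $f(J_\ell)\subset J_{\tau(\ell)}$, and a labelling $\eta:\{1,\ldots,m\}\to\{1,\ldots,n\}$ with $J_\ell\subset I_{\eta(\ell)}$. From the proof of Theorem \ref{pr1}, every periodic orbit $\gamma$ of $f$ lies inside the interior $\cup_\ell J_\ell$ (its points avoid $\{0,1\}\cup Q$, and in fact $\gamma$ is the attracting fixed point of $f^p$ on some $J_{\ell_s}$), so $\gamma$ determines a cycle $C_\gamma=\{\ell_0,\tau(\ell_0),\ldots\}$ of $\tau$; the cycles coming from distinct periodic orbits are disjoint, since if two periodic orbits met the same $J_\ell$ they would both be captured by the same attracting fixed point of an iterate of $f$ restricted to $J_\ell$, forcing them to coincide.

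So it suffices to show that $\tau$ has at most $n$ periodic cycles. First I would observe that each $J_\ell$ carries a well-defined digit $\eta(\ell)\in\{1,\ldots,n\}$, so it is enough to prove: \emph{$\tau$ has at most one periodic cycle $C$ with any given digit $i$ appearing on $C$}, i.e. that two distinct $\tau$-cycles cannot share a common value of $\eta$. Equivalently, and more usefully, I want to show that along any $\tau$-cycle the digit sequence $\eta(\ell_0),\eta(\ell_1),\ldots$ (which is exactly the eventually-periodic itinerary from Lemma \ref{ipartition}, and here genuinely periodic since we are on a cycle) is such that distinct cycles have distinct \emph{periodic itineraries}, and moreover that the total number of distinct periodic itineraries is at most $n$. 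The key combinatorial input is an ordering argument: the maps $\phi_i$ are nondecreasing on each piece (being Lipschitz contractions $\bar I\to(0,1)$ — wait, they need not be monotone, but $f^p$ restricted to a single $J_\ell$ is a composition landing in the right order). The cleaner route is to use that $f|_{J_\ell}$ for each $\ell$ is a single $\phi_{\eta(\ell)}$ restricted to $J_\ell$, and exploit that the intervals $J_1,\ldots,J_m$ are linearly ordered on the line; I would show that if two periodic points $z,z'$ have itineraries $(d_0,d_1,\ldots)$ and $(d_0',d_1',\ldots)$ that first differ at position $t$, then the relative order of $f^t(z)$ and $f^t(z')$ inside $I_{d_t}=I_{d_t'}$… no. The correct statement: the periodic orbit $\gamma$ is a fixed point of $\phi_{d_{p-1}}\circ\cdots\circ\phi_{d_0}$ where $(d_0,\ldots,d_{p-1})$ is the (primitive) periodic block of its itinerary; two periodic orbits with the same itinerary are fixed points of the same contraction, hence equal; so distinct periodic orbits have distinct itineraries. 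It then remains to bound the number of distinct periodic itineraries by $n$.

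For that last bound — which I expect to be the main obstacle — I would argue as follows. Consider the circular itinerary of each periodic orbit. I claim that for each $i\in\{1,\ldots,n\}$, at most one periodic orbit can have $i$ as the digit of the point of its orbit lying in the piece $I_i=[x_{i-1},x_i)$; more precisely I would use the left/right-continuity and the ordering of $\mathscr{P}$ to produce, for each periodic orbit $\gamma$, a canonical ``first'' piece and show the map $\gamma\mapsto(\text{that piece's digit})$ is injective, by a cutting-sequence / Sturmian-type argument adapted to $n$ letters: order the periodic orbits $z^{(1)}<z^{(2)}<\cdots$ on the line, and show their itineraries are ordered in the appropriate lexicographic/cyclic sense, so that the digit at the ``leftmost discrepancy'' of consecutive orbits runs through distinct values of $\{1,\ldots,n\}$, giving at most $n$ orbits. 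I would carry out the steps in this order: (1) every periodic orbit sits in $\cup_\ell J_\ell$ and is the attractor of an iterate restricted to one $J_\ell$; (2) distinct periodic orbits visit disjoint families of $J_\ell$'s, hence correspond to disjoint $\tau$-cycles; (3) each periodic orbit has a well-defined primitive periodic itinerary and is the unique fixed point of the associated composition of $\phi_i$'s, so distinct orbits have distinct itineraries; (4) the combinatorial bound: the number of distinct periodic itineraries realizable by $\tau$ on $\mathscr{P}$ is at most $n$, via the ordering argument on the line. Step (4) is where the real work lies, since it is exactly the $n$-letter generalization of the ``rotation number'' rigidity for interval contractions, and must use that the partition has only $n$ atoms $I_1,\ldots,I_n$ together with the monotonicity of $f$ on each atom.
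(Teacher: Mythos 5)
Your steps (1)--(3) are sound and match the preliminaries of the paper's argument: periodic orbits lie entirely in $\cup_\ell J_\ell$, each interval $J_\ell$ meets at most one periodic orbit, and distinct periodic orbits therefore yield disjoint $\tau$-cycles with distinct itineraries. The problem is step (4), which you explicitly flag as ``where the real work lies'' and then do not actually carry out. Worse, the route you sketch for it --- ordering the periodic orbits on the line and running a Sturmian/cutting-sequence argument to conclude there are at most $n$ distinct periodic itineraries --- depends on the branches being monotone, so that the point-to-itinerary map is order-preserving. The hypothesis here is only that each $\phi_i$ is a Lipschitz contraction; you noticed this yourself in passing (``wait, they need not be monotone'') and never resolved it. Without monotonicity, itineraries need not be lexicographically ordered along the line and the proposed argument collapses. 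So the proposal has a genuine gap at exactly the point you identified as central.

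The paper's proof bounds the number of periodic orbits by a different, monotonicity-free combinatorial device built on the structure of $\mathscr{P}$ near the cut points. For each $1\le i\le n-1$, the point $x_i$ belongs to $Q$, hence is the shared endpoint of exactly two intervals $F_i,G_i\in\mathscr{P}$. One sets $\mathscr{P}'=\{F_1,G_1,\ldots,F_{n-1},G_{n-1}\}$ and declares $C_1\equiv C_2$ whenever some forward images $f^{\ell}(C_1)$ and $f^{k}(C_2)$ lie in a common $C\in\mathscr{P}'$. Lemma \ref{P0} says every $J\in\mathscr{P}$ eventually enters some $F_i\cup G_i$, so every periodic orbit meets $\mathscr{P}'$, and intervals of $\mathscr{P}'$ meeting the same periodic orbit are equivalent. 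The heart of the count is Lemma \ref{P2} and Corollary \ref{P3}: ordering the finite sets $Q_i$ by their minima produces, for each $2\le k\le n-1$, an interval $(a_{k-1},b_k)\in\mathscr{P}$ whose two endpoints lie in $Q$'s of different indices; by Lemma \ref{P1} this forces a forward collision linking $\{F'_k,G'_k\}$ to the earlier $\{F'_1,G'_1,\ldots,F'_{k-1},G'_{k-1}\}$, so each new pair adds at most one equivalence class. Inductively there are at most $n$ classes (Lemma \ref{P4}), hence at most $n$ periodic orbits. The order structure that actually gets used is the linear order on the finite sets $Q_i\subset(0,1)$, not an order on itineraries, and no monotonicity of the $\phi_i$ is needed. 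To repair your proposal, replace step (4) by this equivalence-class count on $\mathscr{P}'$.
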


We would like to distinguish some intervals in  $ \mathscr{P}$, first those having $x_0=0$ and $x_n=1$ as endpoints. We denote them by $F_0$  and $G_n$, where  $x_0\in \overline{F_0}$ and $x_n\in \overline{G_n}$. 
For every $1\leq i\leq n-1$, let $F_i=(a,x_i)$ and $G_i=(x_i,b)$ be the two intervals in $\mathscr{P}$ which have $x_i$ as an endpoint. We may have $G_i=F_{i+1}$ for some $1\leq i \leq n-2$. Among the intervals $F_1, G_1,\ldots,F_n,G_n$, there are at least $n$  and at most $2(n-1)$ pairwise distinct intervals. 
We will prove that among them there are $1\leq r \leq n$  pairwise distinct intervals, say $C_1,\ldots,C_r$, which satisfy the following: for every $J  \in \mathscr{P}$, there exist $k\ge 0$ and $1\le i \le r$ such that  
$$
f^k(J) \subset  C_i.
$$
This implies that the asymptotical behavior of any interval $J  \in \mathscr{P}$ coincides  with the asymptotical behavior of an interval $C_i$.

Let $J,J_1,J_2\in  \mathscr{P}$ and $k \ge 0$, we remark that $ f^{k}(J)\subset J_1\cup J_2$ if, and only if, $ f^{k}(J)\subset J_1$ or $ f^{k}(J)\subset J_2$.

\begin{lemma}\label{P1}
 Let $(a,b)\in \mathscr{P}$ with $a\in Q_i$ and $b\in Q_j$, where $1\leq i, j \leq n-1$ and $i\neq j$. Then there exists $\ell\ge 0$ such that $($at least$)$ one of the following statements holds
 \begin{itemize}
 \item [(i)] $f^{\ell}(F_i)\subset F_j\cup G_j$ or $f^{\ell}(G_i)\subset F_j\cup G_j$;
 \item [(ii)] $f^{\ell}(F_j)\subset F_i\cup G_i$ or $f^{\ell}(G_j)\subset F_i\cup G_i$.
 \end{itemize} 
\end{lemma}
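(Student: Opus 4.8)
The plan is to track the forward orbit of the endpoints $a$ and $b$ of the interval $(a,b)\in\mathscr{P}$ and use the finiteness of the sets $Q_i = \cup_{k\ge 0}f^{-k}(\{x_i\})$. The key observation is that $a\in Q_i$ means $f^s(a)=x_i$ for some $s\ge 0$, and $b\in Q_j$ means $f^t(b)=x_j$ for some $t\ge 0$; the goal is to ``synchronize'' these two orbits so that a single iterate of $f$ carries $F_i$ or $G_i$ into the pair $F_j\cup G_j$ (or vice versa). Since $\mathscr{P}$ is invariant under $f$ (Lemma \ref{tmt}), the image $f^k(a,b)$ is always contained in a single element of $\mathscr{P}$, and its two ``one-sided limit'' endpoints $\lim_{x\to a+}f^k(x)$ and $\lim_{x\to b-}f^k(x)$ are points of $\{0,1\}\cup\cup_i Q_i$ obtained by iterating $f$ (extended by one-sided continuity) on $a$ and $b$.

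First I would set up notation for the one-sided iteration: since $f$ restricted to each $[x_{i-1},x_i)$ is the Lipschitz contraction $\phi_i$, the point $a$ (being a left endpoint of an interval of $\mathscr{P}$, hence a right-limit point) has a well-defined forward ``right orbit'' $a=a_0, a_1, a_2,\ldots$ with $a_{m+1}=\lim_{x\to a_m+}f(x)$, and similarly $b$ has a forward ``left orbit'' $b=b_0,b_1,b_2,\ldots$. By Lemma \ref{tmt} applied repeatedly, $f^k(a,b)\subset (a_k,b_k)$ and $(a_k,b_k)\in\mathscr{P}$ for every $k$; moreover $a_k$ is always a left endpoint of a member of $\mathscr{P}$ and $b_k$ a right endpoint. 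Now $a\in Q_i$ gives some $s\ge 0$ with $a_s=x_i$, so $F_i=(a_s-\text{something}, x_i)$ is the member of $\mathscr{P}$ with right endpoint $a_s$; wait — more carefully, $a_s = x_i$ means the interval $(a,x_i)$-type relationship: I would identify which of $F_i,G_i$ equals the relevant $\mathscr{P}$-interval adjacent to $a_s$, and likewise for $b_t=x_j$ with $F_j,G_j$.

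The heart of the argument is a case analysis on whether $s\le t$ or $s\ge t$ (where $f^s(a)=x_i$, $f^t(b)=x_j$). Suppose $s\le t$. Then at time $s$ we have $a_s=x_i$, so one of $F_i,G_i$ is $(a_{s-1}',a_s)$ or $(a_s,\ldots)$ — actually since $(a,b)$ maps forward into $(a_s,b_s)$ and $a_s=x_i$, the interval $(a_s,b_s)$ equals $G_i$ or is contained in the structure at $x_i$. The point is: $f^s(a,b)\subset (x_i,b_s)$, and since $x_i$ is an endpoint of exactly the two intervals $F_i,G_i$ with $(x_i,\cdot)=G_i$, we get $f^s(a,b)\subset G_i$ (the interval just to the right of $x_i$), hence $f^s\big((a,b)\big)\subset G_i$. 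Continuing $t-s$ more steps, $f^t(a,b)\subset f^{t-s}(G_i)$ and also $f^t(a,b)\subset(a_t,x_j)$, which is $F_j$. Thus $f^{t-s}(G_i)\supset f^t(a,b)$, and since $f^t(a,b)$ is a nonempty subinterval with $f^{t-s}(G_i)$ contained in a single member of $\mathscr{P}$, we get $f^{t-s}(G_i)\subset F_j\subset F_j\cup G_j$ — giving alternative (i) with $\ell=t-s$. The symmetric case $t\le s$ gives alternative (ii) with $\ell=s-t$. **The main obstacle** I anticipate is the bookkeeping at the endpoints: deciding precisely which of $F_i$ versus $G_i$ (and $F_j$ versus $G_j$) is reached, and handling the degenerate coincidences such as $G_i=F_{i+1}$, or the case where $a_k$ or $b_k$ lands on $0$ or $1$ at some point before reaching $x_i$ or $x_j$ — though since $f(I)\subset(0,1)$, the orbit can only hit $0$ at time $0$, which is excluded because $a,b\in Q_i\cup Q_j\subset I\setminus(\cap_k A_k)$ are interior. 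One must also be slightly careful that ``$f^\ell(F_i)\subset F_j\cup G_j$'' as written uses the actual map $f$ on $F_i$, not the one-sided extension; but since $F_i$ is an open interval disjoint from $\{x_0,\ldots,x_{n-1}\}$'s interaction (its interior points have well-defined itineraries), $f^\ell$ restricted to $F_i$ coincides with the appropriate composition of $\phi$'s and the containment follows from the orbit computation above together with $f^\ell(F_i)$ lying in a single $\mathscr{P}$-interval by Lemma \ref{tmt}.
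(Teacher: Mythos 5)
Your plan — locate hitting times $s,t$ with $f^s(a)=x_i$, $f^t(b)=x_j$, set $\ell=|s-t|$, and use the $f$-invariance of $\mathscr{P}$ to transport the image interval the remaining $|s-t|$ steps — is exactly the paper's argument, with the paper's $\ell_i,\ell_j$ playing the role of your $s,t$. However, two steps in your execution are not sound as written.

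The one-sided orbit bookkeeping is the main problem. You assert $f^k(a,b)\subset(a_k,b_k)\in\mathscr{P}$ with ``$a_k$ always a left endpoint'' and ``$b_k$ always a right endpoint,'' and from this identify $f^s(a,b)\subset G_i$ and $f^t(a,b)\subset F_j$. But in Section~\ref{upperb} the maps $\phi_i$ are arbitrary Lipschitz contractions, not monotone ones; a decreasing branch flips $(a,b)$, so after one such step $a_1=f(a)$ sits at the right end of $f(J)$, and the recursion $a_{k+1}=\lim_{x\to a_k+}f(x)$ no longer computes $\lim_{x\to a+}f^{k+1}(x)$ (a left limit would be required there). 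What actually holds, and what the lemma needs, is only the orientation-free statement $f^s(J)\subset F_i\cup G_i$: one has $x_i=f^s(a)\in\overline{f^s(J)}\subset\overline{J'}$ for the unique $J'\in\mathscr{P}$ containing $f^s(J)$, and $x_i\in Q$ cannot be interior to $J'$, so $J'\in\{F_i,G_i\}$. A related, more substantive omission is that you never invoke $(x_1,\ldots,x_{n-1})\in\Omega_{n-1}'$. That hypothesis is what makes $s$ and $t$ unique and, more to the point, guarantees $f^k(a)\notin\{x_1,\ldots,x_{n-1}\}$ for all $0\le k<s$ (similarly for $b$), which is precisely what makes $f^s$ continuous at $a$ and hence puts $x_i$ into $\overline{f^s(J)}$. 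Without it a discontinuity of $f$ could sit on the orbit of $a$ before time $s$, and then $\lim_{x\to a+}f^s(x)$ need not equal $x_i$ at all. The paper records this uniqueness as the ``Moreover'' clause before performing the synchronization you describe.
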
 
\begin{proof} The hypotheses that $a\in Q_i$, $b\in Q_j$ and $(x_1,\ldots,x_{n-1}) \in\Omega_{n-1}'$ \big(see (\ref{ind})\big) 
  imply that there exist unique integers $\ell_i,\ell_j\ge 0$ such
 that $f^{\ell_i}(a)=x_i$ and $f^{\ell_j}(b)=x_j$. Moreover, $f^k(a)\not\in \{x_1,\ldots,x_{n-1}\}$ for every $k\neq\ell_i$, and $f^m(b)\not\in \{x_1,\ldots,x_{n-1}\}$ for every $m\neq \ell_j$. Let $J=(a,b)$, then
 $f^{\ell_i}(J)\subset F_i\cup G_i$ and $f^{\ell_j}(J)\subset F_j\cup G_j$. Now it is clear that the claim (i) happens if $\ell_i\le \ell_j$ (then we set $\ell=\ell_j-\ell_i$) and the claim (ii) occurs if  $\ell_i\ge \ell_j$ (then we set $\ell=\ell_i-\ell_j$).
 \end{proof}
 
 \begin{lemma}\label{P0}
Let $J\in  \mathscr{P}$, then there exist $1\leq i\leq n-1$ and  $k \ge 0$ such that $ f^{k}(J)\subset F_i\cup   G_i$.
\end{lemma}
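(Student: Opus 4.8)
The plan is to track the forward orbit of an arbitrary interval $J\in\mathscr P$ under the dynamics induced on the invariant quasi-partition, and show it must eventually enter one of the finitely many intervals $F_i\cup G_i$ adjacent to a discontinuity point $x_i$. Recall from Lemma \ref{tmt} that $f$ induces a map $\tau:\{1,\dots,m\}\to\{1,\dots,m\}$ with $f(J_\ell)\subset J_{\tau(\ell)}$ for every $\ell$, and from Theorem \ref{toconclude2} that $f$ is asymptotically periodic. The key point is that the distinguished intervals $F_0,G_n,F_1,G_1,\dots,F_{n-1},G_{n-1}$ are precisely the elements of $\mathscr P$ whose closures meet $\{x_0,x_1,\dots,x_n\}$, and that every other element of $\mathscr P$ is bounded away from all the $x_i$'s.

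First I would fix $J\in\mathscr P$ and consider its itinerary $J=J_{\ell_0}, J_{\ell_1}=f(J_{\ell_0}),\dots$ under $\tau$, which is eventually periodic (this was shown inside the proof of Lemma \ref{ipartition}). So there is $s\ge 0$ and a period $p\ge 1$ with $f^{s}(J)\subset J_{\ell_s}$ and $f^{p}(J_{\ell_s})\subset J_{\ell_s}$; by the argument in the proof of Theorem \ref{pr1}, $f^{p}|_{J_{\ell_s}}$ is a contraction with a unique fixed point $z$, and the periodic orbit $O_f(z)$ is the $\omega$-limit of every point in $J$. Now the crucial observation is that this periodic orbit $O_f(z)$ must contain a point belonging to $\overline{F_i}\cup\overline{G_i}$ for some $1\le i\le n-1$: if it did not, then $O_f(z)$ would be contained in $(0,1)\setminus\bigl(Q\cup\{x_1,\dots,x_{n-1}\}\bigr)$ and bounded away from every $x_i$, so each point of the orbit would lie in the interior of some element of $\mathscr P$ on which $f$ is given by a single branch $\phi_{i}$; but then a small interval around the whole periodic orbit would be forward-invariant and disjoint from $Q$, contradicting the fact that $z$ is a contracting fixed point of $f^p$ on $J_{\ell_s}$ whose basin reaches up to — no, more simply: $O_f(z)\subset (0,1)\setminus Q$ would force the interval $J_{\ell_s}$ containing $z$ to map inside itself forever, yet the endpoints of $J_{\ell_s}$ lie in $Q=\bigcup_i Q_i$, and $Q_i=\bigcup_{k\ge0}f^{-k}(\{x_i\})$ means each endpoint has some iterate equal to some $x_i$; chasing these endpoints gives a point of $\overline{O_f(z)}$ landing on an $x_i$, hence $O_f(z)$ (being a finite set) meets $F_i$ or $G_i$.

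Let me instead organize it more cleanly. I would argue: since $Q$ is a nonempty finite set (Theorem \ref{toconclude2}; nonempty because $x_1\in Q$), pick the smallest $k\ge 0$ with $f^{k}(J)$ contained in an element of $\mathscr P$ having an endpoint that maps to some $x_i$ in exactly, say, the fewest further steps — and then push one more step at a time. Concretely, every endpoint $a$ of every $J'\in\mathscr P$ lies in $\{0,1\}\cup Q$, and if $a\in Q_i$ then by $(x_1,\dots,x_{n-1})\in\Omega'_{n-1}$ there is a unique $\ell\ge0$ with $f^{\ell}(a)=x_i$ (cf. the proof of Lemma \ref{P1}). Fixing such an endpoint $a$ of $J$ with minimal $\ell$, one shows by induction on $\ell$ that $f^{\ell}(J)\subset F_i\cup G_i$: the interval $f^{\ell-1}(J)$ has $f^{\ell-1}(a)$ as an endpoint, $f^{\ell-1}(a)\ne x_j$ for all $j$ (minimality and uniqueness), and $f$ maps $f^{\ell-1}(J)$ across $x_i$ into $F_i\cup G_i$; if $J$ has both endpoints in $\{0,1\}$ then $J=F_0=G_n$ and one applies Lemma \ref{tmt} / the contraction argument to push it into the basin's core, which after finitely many steps lands in some $F_i\cup G_i$ since otherwise the whole forward orbit of $J$ stays in $(0,1)\setminus Q$, impossible as $f^k(J)$ shrinks toward a periodic orbit whose points are accumulated by endpoints of $\mathscr P$ lying in $Q$.

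The main obstacle I anticipate is handling the exceptional interval(s) $F_0$ (and $G_n$) whose endpoints are $0$ and $1$ rather than points of $Q$: for a generic $J$ one literally chases a single endpoint to an $x_i$ in finitely many steps, but $F_0$ has no such endpoint, so one must use the eventual-periodicity of the $\tau$-itinerary together with the observation that the limiting periodic orbit cannot avoid all the $x_i$ (otherwise a forward-invariant neighborhood of it would be disjoint from $Q$, contradicting that the endpoints of the quasi-partition element containing it belong to $Q$ and are dragged by forward iteration onto the $x_i$'s, accumulating on the orbit). Making that avoidance argument airtight — i.e. that a periodic orbit of $f$ must pass arbitrarily close to, hence (being finite and by $\Omega'_{n-1}$) actually have a point in $\overline{F_i}\cup\overline{G_i}$ — is the delicate step; everything else is bookkeeping with the finite maps $\tau$ and $\eta$ from Lemma \ref{ipartition}.
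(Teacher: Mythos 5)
Your second, cleaner argument --- fix an endpoint $a$ of $J$ lying in some $Q_i$, use $(x_1,\ldots,x_{n-1})\in\Omega_{n-1}'$ to get the unique $\ell\geq 0$ with $f^{\ell}(a)=x_i$, then observe that $f^{k}(a)$ is an endpoint of the element of $\mathscr{P}$ containing $f^{k}(J)$ for each $k\leq\ell$, so that at step $\ell$ this element has $x_i$ as an endpoint and is therefore $F_i$ or $G_i$ --- is exactly the argument the paper intends, which it records only by pointing to the proof of Lemma \ref{P1}. So the substantive content of your proposal is correct and matches the paper.

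The ``main obstacle'' you flag at the end, however, is not an obstacle, and the way you try to handle it does not work. The interval $F_0$ \emph{does} have an endpoint in $Q$: it is $F_0=(0,\min Q)$, and $Q$ is nonempty because $\{x_1,\ldots,x_{n-1}\}\subset Q$ (each $x_i\in f^{-0}(\{x_i\})$); likewise $G_n=(\max Q,1)$. Hence every $J\in\mathscr{P}$ has at least one endpoint in $Q$, your endpoint-chase applies to every $J$ including $F_0$ and $G_n$, and the hypothetical case ``$J=F_0=G_n$ with both endpoints in $\{0,1\}$'' never arises. The backup argument you sketch for that phantom case is also unsound: you say it would be ``impossible'' for ``the whole forward orbit of $J$ to stay in $(0,1)\setminus Q$,'' but in fact this is always true for every $J\in\mathscr{P}$, by Lemma \ref{tmt} --- the lemma only asserts that some iterate of $J$ lands inside one of the finitely many distinguished elements $F_i, G_i$ of $\mathscr{P}$, all of which are themselves contained in $(0,1)\setminus Q$. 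For the same reason, the first half of your write-up (tracking the $\omega$-limit of $J$ to a periodic orbit and trying to show that orbit meets $\overline{F_i}\cup\overline{G_i}$) is both unnecessary and unsupported; the statement is purely combinatorial and does not require any information about where periodic orbits sit. Discard both detours; the endpoint-chase paragraph is the whole proof.
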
 
\begin{proof} It follows closely from the arguments used in the proof of Lemma \ref{P1}.
\end{proof}

\begin{lemma}\label{P2}
There exists a permutation $i_1,\ldots,i_{n-1}$ of $1,\ldots,n-1$ and intervals
$(a_{k-1},b_k)\in \mathscr{P}$ with
$a_{k-1}\in Q_{i_1}\cup \ldots \cup Q_{i_{k-1}}$ and $b_k\in Q_{i_k}$ for every $2\le k\le n-1$.
\end{lemma}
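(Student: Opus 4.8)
The plan is to recast the statement as a combinatorial fact about the ordered list of endpoints of the invariant quasi-partition. Write the endpoints of the intervals of $\mathscr{P}$ in increasing order as $0 = p_0 < p_1 < \cdots < p_m = 1$, so that $\mathscr{P} = \{(p_{j-1},p_j) : 1 \le j \le m\}$. Every interior endpoint $p_j$ (that is, $1 \le j \le m-1$) belongs to $(0,1)$ but to none of the open intervals of $\mathscr{P}$, hence to $Q = \bigcup_{i=1}^{n-1} Q_i$. Moreover the sets $Q_1,\dots,Q_{n-1}$ are pairwise disjoint: if $y \in Q_i \cap Q_j$ then $f^s(y) = x_i$ and $f^t(y) = x_j$ for some $s \le t$, whence $x_j = f^{t-s}(x_i)$, which contradicts $(x_1,\dots,x_{n-1}) \in \Omega_{n-1}'$ when $t > s$ and forces $i = j$ when $t = s$. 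Thus there is a well-defined map $c \colon \{1,\dots,m-1\} \to \{1,\dots,n-1\}$ with $p_j \in Q_{c(j)}$, and $c$ is onto because each $x_i \in Q_i$ is itself an interior endpoint of $\mathscr{P}$.

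Next I would order the colours by first appearance. For $v \in \{1,\dots,n-1\}$ set $\mathrm{first}(v) = \min\{j : c(j) = v\}$ and list the colours as $i_1,\dots,i_{n-1}$ so that $\mathrm{first}(i_1) < \cdots < \mathrm{first}(i_{n-1})$; since $\mathrm{first}(c(1)) = 1$ we get $i_1 = c(1)$ and $\mathrm{first}(i_k) \ge 2$ for $k \ge 2$. For $2 \le k \le n-1$, put $j_k = \mathrm{first}(i_k)$ and take $(a_{k-1},b_k) := (p_{j_k-1},p_{j_k}) \in \mathscr{P}$. Then $b_k = p_{j_k} \in Q_{c(j_k)} = Q_{i_k}$, while $a_{k-1} = p_{j_k-1} \in Q_{c(j_k-1)}$; since the colour $c(j_k-1)$ already appears at position $j_k - 1 < j_k$, we have $\mathrm{first}(c(j_k-1)) \le j_k-1 < \mathrm{first}(i_k)$, so $c(j_k-1) = i_a$ for some $a < k$, and hence $a_{k-1} \in Q_{i_1}\cup\cdots\cup Q_{i_{k-1}}$. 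This produces the required permutation and intervals.

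I do not anticipate a serious obstacle here. The two points needing care are the disjointness of the $Q_i$ (equivalently, one may instead fix any colouring $c$ with $p_j \in Q_{c(j)}$ and arrange $c(j) = i$ whenever $p_j = x_i$, which already forces surjectivity) and the orientation of the intervals: the requirement puts the "old" colour at the left endpoint $a_{k-1}$ and the "new" colour at the right endpoint $b_k$, so the colours must be ordered by first appearance read from left to right, which is exactly what makes $c(j_k-1)$ an earlier colour. Conceptually, ordering by first appearance builds an out-branching rooted at $c(1)$ of the directed graph on $\{1,\dots,n-1\}$ having an edge $c(j-1) \to c(j)$ for each $j$ with $2 \le j \le m-1$; such a branching exists because the directed walk $c(1) \to c(2) \to \cdots \to c(m-1)$ visits every vertex.
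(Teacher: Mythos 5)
Your argument is correct and, despite the different vocabulary, is essentially the same as the paper's: ordering the colours by first appearance along the sorted endpoint sequence is the same as the paper's ordering $y_{i_1}<\cdots<y_{i_{n-1}}$ of the minima $y_i=\min Q_i$, and your choice $a_{k-1}=p_{j_k-1}$ is exactly the paper's $a_{k-1}=\max\big((0,b_k)\cap(Q_1\cup\cdots\cup Q_{n-1})\big)$. The only thing you add is an explicit verification that the $Q_i$ are pairwise disjoint (which the paper asserts without proof and which indeed follows from $(x_1,\ldots,x_{n-1})\in\Omega_{n-1}'$).
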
 
\begin{proof} 
Since $Q_1,\ldots , Q_{n-1}$ are pairwise disjoint finite subsets of the interval $(0,1)$, the numbers $y_i=\min Q_i$, $1\le i\le n-1$ are pairwise distinct numbers. Hence, there exists a permutation $i_1,\ldots,i_{n-1}$ of $1,\ldots,n-1$ such that $y_{i_1}<\cdots<y_{i_{n-1}}$. Set $b_k=y_{i_k}\in Q_{i_k}$, $1\le k\le n-1$, therefore
$0\le b_1<b_2<\cdots<b_{n-1}<1$. Notice that 
\begin{equation}\label{simpm}
(0,b_{k})\cap \big(Q_{i_{k}}\cup\cdots\cup Q_{i_{n-1}}\big)=\emptyset\quad\textrm{for every}\quad
1\le k\le n-1.
\end{equation}
For every $2\le k\le n-1$, let $S_k=(0,b_k)\cap \big( Q_1\cup \cdots\cup Q_{n-1}\big)$. Since $b_{k-1}\in  (0,b_k)\cap Q_{i_{k-1}}$, we have that $S_k\neq\emptyset$. Set $a_{k-1}=\max S_k$, then $a_{k-1}<b_k$ and $(a_{k-1},b_k)\in \mathscr{P}$. 
By (\ref{simpm}), $a_{k-1}\in Q_{i_{1}}\cup\cdots\cup Q_{i_{k-1}}$, which concludes the proof.
\end{proof}

Using the permutation $i_1,\ldots,i_{n-1}$ defined in Lemma \ref{P2}, for simplicity, set $F'_k=F_{i_{k}}$ and $G'_k=G_{i_{k}}$, for $1\leq k \leq n-1$.

\begin{corollary}\label{P3}
Let $2\leq k  \leq n-1$, then there exist $1\leq j <k$ and $\ell\geq 0$  such that $($at least$)$ one of the following statements holds$:$\\
$(i)$ $f^{\ell}(F'_j)\subset F'_k \cup G'_k$ or $f^{\ell}(G'_j)\subset F'_{k} \cup G'_{k}$, \\
$(ii)$ $f^{\ell}(F'_k)\subset F'_{j} \cup G'_{j}$ or $f^{\ell}(G'_{k})\subset F'_j \cup G'_j$.
\end{corollary}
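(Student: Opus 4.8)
The plan is to deduce Corollary \ref{P3} from Lemma \ref{P1} applied to the specific intervals produced by Lemma \ref{P2}. Fix $2\le k\le n-1$. By Lemma \ref{P2} there is an interval $(a_{k-1},b_k)\in\mathscr{P}$ with $b_k\in Q_{i_k}$ and $a_{k-1}\in Q_{i_1}\cup\cdots\cup Q_{i_{k-1}}$, so $a_{k-1}\in Q_{i_j}$ for some $1\le j\le k-1$. Since the sets $Q_1,\ldots,Q_{n-1}$ are pairwise disjoint and $i_1,\ldots,i_{n-1}$ is a permutation, the indices $i_j$ and $i_k$ are distinct. Hence the interval $(a_{k-1},b_k)$ satisfies the hypotheses of Lemma \ref{P1} with the roles of $i$ and $j$ there played by $i_j$ and $i_k$ (in some order). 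Lemma \ref{P1} then yields an $\ell\ge 0$ such that one of its two alternatives holds; rewriting $F_{i_j},G_{i_j},F_{i_k},G_{i_k}$ as $F'_j,G'_j,F'_k,G'_k$ via the abbreviation introduced just before the corollary gives exactly statement (i) or (ii) of Corollary \ref{P3}, with $1\le j<k$.

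The key steps, in order, are: (1) invoke Lemma \ref{P2} to obtain the interval $(a_{k-1},b_k)$ and the permutation; (2) observe $a_{k-1}\in Q_{i_j}$ for some $j<k$ and note $i_j\ne i_k$ because the $Q$'s are disjoint; (3) apply Lemma \ref{P1} to $(a_{k-1},b_k)$; (4) translate the conclusion into primed notation. None of these steps is genuinely difficult; the only thing to be careful about is that Lemma \ref{P1} requires the two endpoint-indices to be \emph{different} elements of $\{1,\ldots,n-1\}$, which is where the disjointness of the $Q_i$ and the fact that $i_1,\ldots,i_{n-1}$ is a permutation are used. A second small point is that Lemma \ref{P1} is stated symmetrically in $i,j$ (its conclusion is a disjunction of (i) and (ii)), so one must simply carry both alternatives through the renaming rather than trying to determine which one actually occurs.

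I expect the main (and essentially only) obstacle to be purely bookkeeping: keeping the correspondence between the original indices $i_j,i_k$ and the primed labels $F'_k,G'_k$ straight, and making sure the inequality $j<k$ (rather than merely $j\neq k$) is genuinely delivered — which it is, since $a_{k-1}\in Q_{i_1}\cup\cdots\cup Q_{i_{k-1}}$ forces $j\le k-1$. Thus the proof is a short derivation and can be written in a few lines: apply Lemma \ref{P2}, extract $j$, check $i_j\neq i_k$, apply Lemma \ref{P1}, and relabel.
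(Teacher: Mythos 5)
Your proof is correct and follows essentially the same route as the paper: apply Lemma \ref{P2} to obtain the interval $(a_{k-1},b_k)$ with endpoints in $Q_{i_j}$ and $Q_{i_k}$ for some $j<k$, verify the hypotheses of Lemma \ref{P1} (including $i_j\neq i_k$), and relabel in the primed notation. Your explicit note that disjointness of the $Q_i$ together with the permutation property guarantees $i_j\neq i_k$ is a small helpful addition that the paper leaves implicit.
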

\begin{proof} Let $i_1,\ldots,i_{n-1}$ be the permutation  of $1,\ldots,n-1$ given by  Lemma \ref{P2},
then for every $2\le k\le n-1$, there exist $1\le j<k$ and $(a,b)\in\mathscr{P}$ with
$a\in Q_{i_j}$ and $b\in Q_{i_k}$. The interval $(a,b)$ fulfills the hypothesis of Lemma \ref{P1}. 
The proof is finished by making the following substitutions in the claim of Lemma \ref{P1}:
$i=i_j$, $j=i_k$, $F_i=F_{i_j}=F_j'$ and $F_j=F_{i_k}=F_k'$.
\end{proof}

Next we introduce an equivalence relation in the family of intervals $\mathscr{P'}\subset \mathscr{P}$ defined by
$$ \mathscr{P}'=\{F_1,G_1,\ldots,F_{n-1},G_{n-1}\}=\{F_1',G_1',\ldots,F_{n-1}',G_{n-1}'\}.$$

\begin{definition}
Let  $C_1,C_2 \in  \mathscr{P}'$. We say that $C_1$ and $C_2$ are {\it equivalent} if  there exists $C \in  \mathscr{P}'$ such that $ f^{\ell}(C_1)\cup  f^{k}(C_2)\subset C$ for some  $\ell,k\geq 0$. If $C_1$ and $C_2$ are equivalent, we write $C_1 \equiv C_2$.
\end{definition}

\begin{lemma}\label{P4}
The relation  $\equiv$ is an equivalence relation with at most $n$ equivalence classes.
\end{lemma}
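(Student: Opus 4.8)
The plan is to verify the three axioms of an equivalence relation and then bound the number of classes using Corollary \ref{P3}. Reflexivity is immediate: take $C=C_1$ and $\ell=k=0$. Symmetry is built into the definition, since the condition $f^\ell(C_1)\cup f^k(C_2)\subset C$ is symmetric in $C_1,C_2$. The only axiom requiring work is transitivity. Suppose $C_1\equiv C_2$ via $C$ with $f^{\ell_1}(C_1)\subset C$, $f^{\ell_2}(C_2)\subset C$, and $C_2\equiv C_3$ via $C'$ with $f^{m_2}(C_2)\subset C'$, $f^{m_3}(C_3)\subset C'$. The key observation is that $\mathscr{P}$ is invariant under $f$ (Lemma \ref{tmt}): each $f$-image of an interval of $\mathscr{P}$ lands inside a single interval of $\mathscr{P}$. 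Applying this along the orbit of $C_2$, the interval $f^{\max(\ell_2,m_2)}(C_2)$ is contained in some $D\in\mathscr{P}$ which is itself contained (after finitely many further iterates) in both the forward orbit of $C$ and the forward orbit of $C'$; more precisely, $f^{\max(\ell_2,m_2)-\ell_2}(C)$ and $f^{\max(\ell_2,m_2)-m_2}(C')$ both contain this common interval, hence both forward orbits eventually enter one common interval $\tilde C\in\mathscr{P}$. By Lemma \ref{P0} (or directly, by pushing forward far enough and using that $\mathscr{P}'$ captures the recurrent part), one may assume $\tilde C\in\mathscr{P}'$. Then $f^{\ell_1+\text{(extra)}}(C_1)\subset\tilde C$ and $f^{m_3+\text{(extra)}}(C_3)\subset\tilde C$, giving $C_1\equiv C_3$.

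For the bound on the number of classes, I would use Corollary \ref{P3} together with the permutation $i_1,\dots,i_{n-1}$ of Lemma \ref{P2}. Recall $\mathscr{P}'=\{F_1',G_1',\dots,F_{n-1}',G_{n-1}'\}$, which has at most $2(n-1)$ elements. First, for each fixed $k$ the two intervals $F_k'$ and $G_k'$ are equivalent: they share the endpoint $x_{i_k}$, and since $f^{k}$ is left- or right-continuous at $x_{i_k}$ (Lemma \ref{oneofu}/Lemma \ref{toconclude1}) while $\mathscr{P}$ is $f$-invariant, the forward images $f(F_k')$ and $f(G_k')$ lie in intervals of $\mathscr{P}$ that, after finitely many steps, coincide — hence $F_k'\equiv G_k'$. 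This collapses $\mathscr{P}'$ to at most $n-1$ "levels" $L_k=\{F_k',G_k'\}$, $1\le k\le n-1$. Next, Corollary \ref{P3} states that for each $k\ge 2$ there is $j<k$ so that some forward image of a member of $L_j$ lands in $F_k'\cup G_k'$ (or vice versa); combined with $F_k'\equiv G_k'$ and the remark preceding Lemma \ref{P1} (that landing in $F_k'\cup G_k'$ means landing in one of them), this yields $L_j\equiv L_k$ in the sense that all four intervals are in one class. Thus levels $L_2,\dots,L_{n-1}$ each merge with an earlier level, so the $n-1$ levels form a single equivalence class. Wait — that over-collapses; the correct reading is that Corollary \ref{P3} connects the levels in a tree-like fashion, but since \emph{every} level $L_k$ with $k\ge2$ is connected to \emph{some} strictly earlier level, transitivity forces all of $L_1,\dots,L_{n-1}$ into one class, giving at most one class coming from $\mathscr{P}'$. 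Adding back the (at most) two intervals $F_0,G_n$ at the ends — which need not be related to the rest — the total is at most $1+2$; but in fact the intended count is that $\mathscr{P}'$ itself, being what remains after removing $F_0,G_n$, splits into at most $n$ classes because the merging via Corollary \ref{P3} need not be exhaustive when the hypothesis "$i\ne j$" of Lemma \ref{P1} fails to apply.

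The main obstacle, and the step I would be most careful about, is the counting argument: one must track exactly which pairs of intervals in $\mathscr{P}'$ are forced to be identified and argue that the "forest" of identifications provided by Corollary \ref{P3} has at least $n-1$ edges on at most $2(n-1)$ vertices while respecting the level structure, so that the number of connected components is at most $n$. Concretely I would set up a graph $\mathcal G$ on vertex set $\mathscr{P}'$, put an edge between $F_k'$ and $G_k'$ for each $k$ (the "shared-endpoint" edges, $n-1$ of them), and put an edge realizing each instance of Corollary \ref{P3} (one new edge for each $k=2,\dots,n-1$). Then $\mathcal G$ has $\le 2(n-1)$ vertices and $\ge (n-1)+(n-2)=2n-3$ edges, so it has at most $2(n-1)-(2n-3)=1$ independent cycle — hence the number of components is at most $2(n-1)-(2n-3)+(\text{number of cycles})$; making this precise requires knowing the edges genuinely connect distinct prior components, which is exactly the content of the "$j<k$" clause. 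Carrying that bookkeeping through to the clean bound "$\le n$ classes" is the delicate part; the rest is a routine application of invariance of $\mathscr{P}$ and the one-sided continuity of $f^k$.
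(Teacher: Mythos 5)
Your verification that $\equiv$ is an equivalence relation is essentially the paper's argument in a slightly more roundabout form: the paper chains $C_1\equiv C_2$, $C_1\equiv C_3$ and compares the two exponents attached to the shared interval $C_1$; you chain $C_1\equiv C_2$, $C_2\equiv C_3$ through the shared $C_2$. Either way the engine is the same, namely Lemma~\ref{tmt} (invariance of $\mathscr{P}$) forcing the later of the two witness intervals $C,C'$ to be mapped inside the earlier one. Your detour through a ``common interval $\tilde C$'' and Lemma~\ref{P0} is unnecessary --- you can take $\tilde C$ to be $C$ or $C'$ directly, both of which are already in $\mathscr{P}'$ --- but the idea is sound.

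The counting half has a genuine gap. The load-bearing claim in your argument, ``$F_k'\equiv G_k'$ for every $k$,'' is unjustified and in general false. The two intervals $F_{i_k}=(a,x_{i_k})$ and $G_{i_k}=(x_{i_k},b)$ lie on opposite sides of a discontinuity point $x_{i_k}$ of $f$, so $f\vert_{F_{i_k}}$ and $f\vert_{G_{i_k}}$ are governed by \emph{different} branches $\phi_j$, $\phi_{j+1}$; nothing forces their forward images to fall into the same element of $\mathscr{P}$, ever. One-sided continuity of $f$ (or $f^k$) at $x_{i_k}$ (Lemma~\ref{oneofu}) says nothing about the forward orbits of the two adjacent intervals coinciding --- it only constrains the image of the single point $x_{i_k}$. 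Without this ``shared-endpoint'' identification, your graph $\mathcal G$ loses the $n-1$ edges joining $F_k'$ to $G_k'$ and the cycle count collapses, so your bound never lands on $n$. You also briefly drag $F_0,G_n$ into the count, but the equivalence relation is declared on $\mathscr{P}'$ only, so those intervals are not part of the tally.

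The paper avoids all of this with a clean induction that does not need $F_k'\equiv G_k'$. Let $m_k$ be the number of distinct classes among $[F_1'],[G_1'],\ldots,[F_k'],[G_k']$. Then $m_1\le 2$, and Corollary~\ref{P3} guarantees that when you pass from level $k-1$ to level $k$, at least one of $F_k',G_k'$ is equivalent to some interval already in the pool, so adding the pair $\{F_k',G_k'\}$ can create at most one fresh class. Hence $m_k\le m_{k-1}+1$, giving $m_{n-1}\le n$ --- the ``$j<k$'' structure from Lemma~\ref{P2} is used exactly once per level, as a spanning device, and there is no need to show the two intervals at a given level coincide. You should replace your graph-theoretic sketch with this inductive count.
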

\begin{proof}
It is clear that $\equiv$ is reflexive and symmetric. To prove that $\equiv$ is transitive, let 
$C_1,C_2,C_3 \in  \mathscr{P}'$ with $C_1\equiv C_2$ and $C_1\equiv C_3$. We will prove that $C_3\equiv C_2$.

There exist  $C,C' \in  \mathscr{P}'$ such that   $ f^{\ell}(C_1)\cup  f^k(C_2)\subset C$  and 
 $ f^{p}(C_1)\cup  f^q(C_3)\subset C'$ for some $\ell,k,p,q\geq 0$.
If $\ell \ge p$, then  $f^{\ell-p}(C') \subset C$, which means that $f^{q+\ell-p}(C_3) \subset C$ implying that  $C_3\equiv C_2$. Otherwise $\ell < p$,  then  $f^{p-\ell}(C) \subset C'$, which means that $f^{k+p-\ell}(C_2) \subset C'$ implying that  $C_3\equiv C_2$. We have proved that $\equiv$ is an equivalence relation

Denote by $[C]$ the equivalence class of the interval $C\in \mathscr{P}'$. Now we will prove that $\equiv$ has at most $n$ equivalence classes.

For each $1\le k \le n-1$, let $m_{k}\ge 1$ denote the number of pairwise distinct terms in the sequence
\begin{equation}\label{seqf}
[F_1'], [G_1'],\ldots, [F_{k}'],[G_{k}'].
\end{equation}
We have that $m_1\le 2$. By Corollary \ref{P3}, for each $2\le k \le n-1$, there exist $C_1\in \{F_1',G_1',\ldots,F_{k-1}',G_{k-1}'\}$ and $C_2\in \{F_{k}',G_{k}'\}$ such that $C_1\equiv C_2$. Hence, $m_{k}\le m_{k-1}+1$ for every $2\le k\le n-1$. By induction, $m_{k}\le k+1$ for every $1\le k \le n-1$.
The proof is finished by taking $k=n-1$.
\end{proof}

\noindent
{\bf Proof of Theorem \ref{IP1}}.
 The fact that $(x_1,\ldots,x_{n-1})\in \Omega'_{n-1}$ implies that the periodic orbits of $f$ are entirely contained in the union of the intervals of the quasi-partition $\mathscr{P}$. Moreover, each interval of $\mathscr{P}$ intersects at most one periodic orbit of $f$. By Lemma \ref{P0}, every orbit of $f$ intersects an interval of $\mathscr{P}'$.  The intervals of $\mathscr{P}'$ that intersect the same periodic orbit of $f$ belong to the same equivalence class. In this way, there exists an injective map that assigns to each periodic orbit of $f$ an equivalence class. As a result, the number of periodic orbits of $f$ is bounded from above by the number of equivalence classes, which by Lemma \ref{P4} is less or equal to $n$.
\cqd

The proof of Theorem \ref{main} follows straightforwardly from Theorem \ref{toconclude2} and Theorem \ref{IP1}.

\section{Example}\label{sexample}

\begin{example}\label{ex:nonvanishing} Let $\phi_{1}$ and  $\phi_{2}$ be affine maps defined by
\begin{equation}\label{eqn}
   \phi_{1}(x) = \frac{4}{5}x + \frac{1}{10}\quad  \text{and} \quad \phi_{2}(x)=\frac{3}{5}x+\frac{1}{20}, \;  \text{for every} \; x\in \bar{I}. 
\end{equation}
Note that both maps are strictly increasing and that $\phi_{1}(x) > \phi_{2}(x)$ for all $x \in \bar{I}$.  We will show here that for the IFS $\{\phi_1,\phi_2\}$ defined by (\ref{eqn}), $\cap_{k \geq 0} A_{k}$ is a nontrivial interval. To this end, observe first that for any integer $k\geq1$,
\begin{equation}
 \label{eqn:limitsofAk}
  \max A_{k} = \phi_{1}^{k}(1)
    \quad \text{and} \quad
  \min A_{k} = \phi_{2}^{k}(0).
\end{equation}
This is immediate for $k=1$. Suppose now that these formulas hold for a given $\ell>1$. Let $h_{\ell+1} \in \C_{\ell+1}$ such that for $h_{\ell} \in \C_{\ell}$, $h_{\ell+1} = \phi_{1} \circ h_{\ell}$ or $h_{\ell+1} = \phi_{2} \circ h_{\ell}$. By the induction hypothesis, $\max h_{\ell}(I) \leq \phi^{\ell}_{1}(1)$,   
hence
\begin{align*}
  \max h_{\ell+1}(I) 
    &\le 
      \max \big\{\phi_{2}(h_{\ell}(I)), \phi_{1}(h_{\ell}(I))\big\} \\
    &= 
      \max \phi_{1}(h_{\ell}(I)) \leq \phi_{1}(\phi_{1}^{\ell}(1)) 
      = \phi_{1}^{\ell+1}(1).
\end{align*}
Since $A_{\ell+1}$ is maximised by maximising $h_{\ell+1}(I)$, this implies that $\max A_{\ell+1} = \phi_{1}^{\ell+1}(1)$. A similar argument shows that $\min A_{\ell+1} = \phi_{2}^{\ell+1}(0)$.

Observe that for any map of the form $\phi(x) = ax+b$, where $a\neq 1$, $\phi^{k}(x) = a^{k}x + b(1-a^{k})/(1-a)$. Hence
\[
  \phi^{k}_{1}(1) = \left(\frac{4}{5}\right)^{k} + \frac{1}{2}\left[1-\left(\frac{4}{5}\right)^{k}\right]
  \qquad\text{and}\qquad
  \phi^{k}_{2}(0) = \frac{1}{8}\left[1-\left(\frac{3}{5}\right)^{k}\right].
\]
These formulas will be used to show that  for any $k\geq1$, $A_{k} = [\phi_{2}^{k}(0), \phi_{1}^{k}(1)]$. We proceed by induction on $k$. This claim is direct for $k=1$ because $\cup_{h \in \C_{1}} h(I) = \phi_1(I)\cup \phi_2(I) = [0.05, 0.65] \cup [0.1, 0.9] = [\phi_{2}(0), \phi_1(1)]$. Assume next that this claim holds for a given integer $\ell >1$, namely that $A_\ell = [\phi_2^\ell(0), \phi_1^\ell(1)]$. Note that in this case,
\begin{align*}
   \phi_1(\phi_2^\ell(0)) 
   &= \frac{1}{10} \left[1 - \left(\frac{3}{5}\right)^\ell\right] + \frac{1}{10}\\
   &< 
  \frac{3}{5} \left[\left(\frac{4}{5}\right)^{\ell} + \frac{1}{2} \left(1 - \left(\frac{4}{5}\right)^\ell\right)\right] + \frac{1}{20}= \phi_2(\phi_1^\ell(1)),
\end{align*}
hence $\phi_1(A_\ell) \cap \phi_2(A_\ell) \neq \emptyset$. This implies that $[\phi_2^{\ell+1}(0), \phi_1^{\ell+1}(1)] \subset A_{\ell+1}$, but (\ref{eqn:limitsofAk}) also gives $A_{\ell+1} \subset [\phi_2^{\ell+1}(0), \phi_1^{\ell+1}(1)]$,  therefore $A_{\ell+1} = [\phi_2^{\ell+1}(0), \phi_1^{\ell+1}(1)]$. 
To conclude, observe that $ \cap_{k\geq 0} A_k= [\lim_{k\to\infty} \phi_2^k(0), \lim_{k\to\infty}\phi_1^k(1)] = [\frac{1}{8},\frac{1}{2}]$. 

A graph for the sets $A_k$, $1 \leq k \leq 14$, of the IFS  $\{\phi_1,\phi_2\}$  defined by (\ref{eqn}) is shown in Figure~\ref{fig:Akevolution}.
\end{example}

\begin{figure}[h]
\begin{center}
\includegraphics[width=\linewidth]{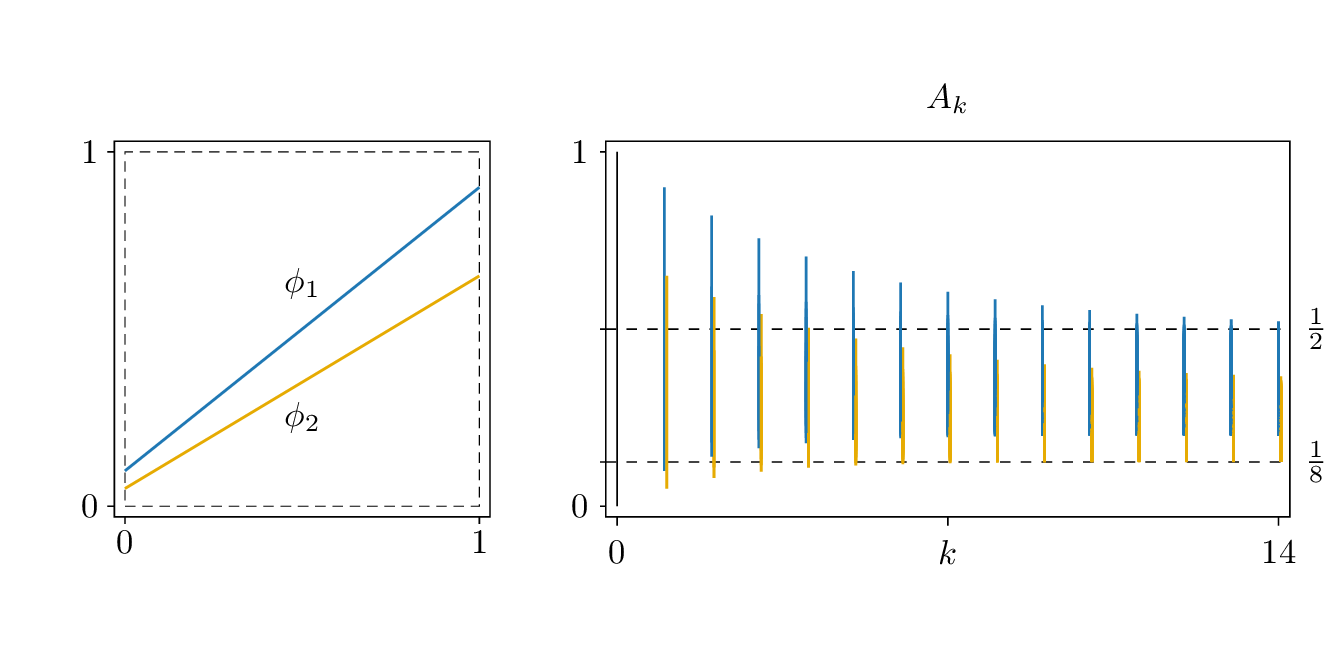}
\caption{The IFS $\{\phi_1,\phi_2\}$ and the corresponding sets $A_0,A_1,A_2,\ldots$.}\label{fig:Akevolution}
\end{center}
\end{figure}

\end{document}